\theoremstyle{plain}
\newtheorem{thm}{Theorem}[section] 
\newtheorem{prop}[thm]{Proposition} 
\newtheorem{lem}[thm]{Lemma} 
\newtheorem{cor}[thm]{Corollary} 
\theoremstyle{remark} 
\newtheorem*{rem}{Remark}
\theoremstyle{definition}
\newtheorem*{conditionHD}{Condition (HD)}
\numberwithin{equation}{section}  
\renewcommand*{\div}{\operatorname{div}}
\newcommand*{\curl}{\operatorname{curl}}
\newcommand*{\id}{\operatorname{Id}}
\newcommand*{\supp}{\operatorname{supp}}
\newcommand*{\loc}{\mathrm{loc}}
\newcommand*{\bydef}{\overset{\rm def}{=}}
\newcommand*{\norm}[1]{\left\Vert #1\right\Vert}
\title[From QGSW to Euler Equations]{Sharp Strong Convergence in Ideal Flows}
\author{Haroune Houamed}
\address{New York University Abu Dhabi \\
		Abu Dhabi \\
		United Arab Emirates} 
\email{\href{mailto:haroune.houamed@nyu.edu}{haroune.houamed@nyu.edu}}
\author{Marc Maga\~na}
\address{Departament de Matem\`atiques – UAB \\
		Facultat de Ci\`encies, 08193 – Bellaterra, Barcelona, Spain} 
\email{\href{mailto:marc.magana@uab.cat}{marc.magana@uab.cat}}
\begin{document}
	
	\keywords{Strong Compactness, Sharp Convergence, Yudovich solutions,  Incompressible flows}
	\date{\today}
	
	\begin{abstract}
		We investigate the strong convergence  of weak solutions to the two-dimensional Quasi-Geostrophic Shallow-Water (QGSW) equation as the inverse Rossby radius tends to zero. In this limit, we recover the Yudovich solution of the incompressible Euler equations.
		  We prove that the vorticity convergence  holds  in   $L^\infty_t L^p_x$, for any finite integrability exponent $p<\infty$. This extends to the case $p=\infty$ provided that the initial vorticities are continuous and converge uniformly. We  also discuss the sharpness of this limit by demonstrating that the continuity assumption on the initial data is necessary for the endpoint convergence in $L^\infty_{t,x}$. 
		
		The proof of the strong convergence relies on the {\em Extrapolation Compactness} method, recently introduced by Ars\'enio and the first author to address similar stability questions for the Euler equations. The approach begins with establishing the convergence in a lower regularity space, at first. Then, in a later step, the convergence to Yudovich's vorticity of Euler equations in Lebesgue spaces comes as a consequence of a careful analysis of the evanescence of specific high Fourier modes of the QGSW vorticity. A central challenge arises  from the absence of a velocity formulation for QGSW, which we overcome by employing advanced  tools from Littlewood Paley theory in   endpoint settings.
		
		  The sharpness of the convergence in the endpoint $L^\infty_{t,x}$ case is obtained in the context of vortex patches, drawing insights from key  findings  on uniformly rotating and stationary solutions of active scalar equations. 
	\end{abstract}

	\maketitle
	
	\tableofcontents


	\section{Introduction and Main results}

	\subsection{Setup and Motivation}
	The quasi-geostrophic shallow-water \eqref{QGSW} equations serve as an essential model for describing large-scale atmospheric and oceanic circulations. These equations arise asymptotically from the rotating shallow-water equations in the limit of rapid rotation and weak variations of the free surface (for a detailed derivation an a comprehensive overview, see \cite{plotka2011shallow,vallis2017atmospheric} and references therein). The QGSW equations take the form  
	\begin{equation}\label{QGSW} \tag{QGSW}
		\begin{cases}
			\begin{aligned}
				\partial_t \omega_\lambda + u \cdot \nabla \omega_\lambda &=0,  \\
				u&= \nabla ^\perp \left( \lambda - \Delta\right)^ {-1} \omega_\lambda , \\
				\omega_\lambda|_{t=0} &= \omega_{0, \lambda}, 
			\end{aligned}
		\end{cases}
	\end{equation}
	where $\omega_\lambda$ is the potential vorticity, $u$ the velocity field, $\nabla^\perp= (-\partial_2, \partial_1)^t$, and $\lambda\geq 0$. In the literature, $\sqrt{\lambda}$ is known as the inverse Rossby radius, a characteristic length scale that emerges from the balance between rotation and stratification. A small $\lambda$ corresponds to a nearly rigid free surface, and in the limit $\lambda \to 0$, the QGSW equations reduce to the two-dimensional vorticity formulation of the Euler equations 
	\begin{equation}\label{Euler} \tag{Euler}
		\begin{cases}
			\begin{aligned}
				\partial_t \Omega + v \cdot \nabla \Omega &=0,  \\
				v&= \nabla ^\perp  (-\Delta)^{-1} \Omega , \\
				\Omega|_{t=0} &= \Omega_{0}. 
			\end{aligned}
		\end{cases}
	\end{equation} 
	
	The literature on the Euler equations is vast, and we highlight here only some key results with a direct connection to the scope of this paper. Global existence of classical solutions is a consequence of the transport structure of the vorticity equation, where the conservation of Lebesgue norms of the initial vorticity plays a central role in ruling out any singularity formation in dimension two. We refer to \cite{C98,MB01} for a more comprehensive discussion on relevant subject matters.
	
	 For less regular initial data, Yudovich \cite{Y63} proved in 1962  that Euler's system admits a unique global weak solution when the initial vorticity $\Omega_0$ lies in $L^1_x\cap L^\infty_x$. This provides a perfect context for the study of vortex patch solutions, namely   characteristic functions of bounded domains, including (rigid/non-rigid) uniformly rotating patches. A remarkable exact solution of such type  is Kirchhoff’s ellipse (1876) \cite{K1876}, established as a first   nontrivial example of a uniformly rotating solution. The existence of general class of rotating patches, called also V-states, was discovered numerically by Deem and Zabusky (1978) \cite{DZ78}. An analytical proof based on bifurcation theory and complex analysis was  provided later by Burbea (1982) \cite{B82}, establishing the existence of \( m \)-fold symmetric V-states bifurcating from Rankine vortices for any \( m \in \mathbb{N}^* \). More recent studies have explored V-states in various contexts and models, including doubly-connected patches, vortex pairs, and boundary effects; see \cite{CCGS16II,CCGS16,dHHH16,dHHMV16, dHHHM16,G20,G21,GPSY21,HH15,HH21,HMW20,HW22,HM16,HM16II,HM17,HMV13,HMV15} for more details. 
	
	Parallel developments in the \eqref{QGSW} equations began in the late 1980s, initially through numerical studies. Polvani (1988) \cite{P88} and Polvani, Zabusky, and Flierl (1989) \cite{PFZ89} investigated generalizations of Kirchhoff’s ellipse for different values of the parameter, including doubly-connected patches and multi-layer flows. More recently, P\l otka and Dritschel (2012) \cite{PD12} carried out a more detailed analysis, studying the linear stability and nonlinear evolution of these solutions. The first analytical result came much later when Dritschel, Hmidi, and Renault (2019) \cite{renault} extended Burbea's result to \eqref{QGSW}. Since then, the mathematical study of \eqref{QGSW} has gained momentum, as seen for instance in the recent works \cite{hmidi2021time,MMO25,roulley2023vortex,YAN20241}.
	
	Given this concise overview of the state of   the art concerning both \eqref{QGSW} and \eqref{Euler}, it becomes particularly compelling to rigorously understand the formal connection between these models as $\lambda$ approaches zero. While Mateu, Orobitg and the second author address  this problem in \cite{MMO25} within the context of H\"older continuous initial data, we are  particularly  interested here in analyzing  this connection within a rougher  configuration,  specifically one that allows for the consideration of more general and uniformly rotating vortex patches.
More precisely, we investigate the convergence of Yudovich-type solutions of the QGSW equations to those of the Euler equations as the inverse Rossby radius, $\sqrt{\lambda}$, tends to zero. Our main results establish rigorous, sharp convergence under suitable assumptions, providing quantitative insights into the relationship between these two models in a critical setting. A more detailed discussion of the contributions of this paper is provided in the next section.

	It is worth noting that the problem of interest here is closely related to the inviscid limit in the Navier--Stokes equations within the Yudovich class. For earlier results on classical solutions, see \cite{BM81, C86, K72, S71}, and for more recent developments on weak solutions, see \cite{AD04, CW95, CW96, NM07} for vortex patches and \cite{CH96CPDE} for the case of a non-Lipschitz velocity field. More recently, another significant progress has also been made on the strong vanishing viscosity limit of vorticities in Lebesgue spaces within the Yudovich class, as established in \cite{CCS, CDE, HCE}. A brief discussion of the relevance of that limit to our results in this paper will be provided in subsequent remarks, later on.

	\subsection{Main results: Statements and    Discussions}

	 The primary aim of this paper is to rigorously establish the limit as $\lambda \to 0$ in \eqref{QGSW}, thereby deriving the Euler equations from the Quasi-Geostrophic Shallow-Water model. Additionally, we seek a quantitative description of this limit in a borderline setting, encompassing key scenarios such as vortex patches where the velocity field is not necessarily Lipschitz-continuous. Furthermore, we aim to establish a sharp stability result in terms of the functional setting in which the solutions initially reside. This forms the core of our findings, which we state below (in Theorems \ref{thm:main} and \ref{thm:main2}).
	\begin{thm}[Strong Convergence in Yudovich class]\label{thm:main}
		 Let $\omega_{0,\lambda}$ and $\Omega_0$ be an initial data for \eqref{QGSW} and \eqref{Euler}, respectively, belonging to the Yudovich class, i.e., 
		\begin{equation*}
			\omega_{0,\lambda}, \Omega_0 \in L^1\cap L^\infty(\mathbb R^2),
		\end{equation*}
		uniformly in $\lambda>0$. Assume further that 
		\begin{equation*}
			\lim_{\lambda\to 0} \left(  \norm {\nabla^\perp \Delta^{-1}(  \omega_{0,\lambda}-\Omega_0)}_{ L^2(\mathbb R^2)}+  \norm {\omega_{0,\lambda}-\Omega_0}_{ L^2(\mathbb R^2)}\right) =0.
		\end{equation*}
		Then, the solution $\omega_\lambda$ of \eqref{QGSW} converges, as $\lambda\to 0$, to the solution $\Omega$ of  \eqref{Euler} on any finite interval of time. More precisely, it holds that 
		\begin{equation*}
			\lim_{\lambda\to 0} \left(  \norm {\nabla^\perp \Delta^{-1}(  \omega_{\lambda}-\Omega)}_{L^\infty ([0,t]; L^2\cap L^\infty(\mathbb R^2))}+  \norm {\omega_{\lambda}-\Omega}_{L^\infty ([0,t]; L^p(\mathbb R^2))}\right) =0,
		\end{equation*}
		for any $t\geq 0$, and $p\in (1,\infty)$. 
		
		If moreover $\Omega_0$ is continuous and 
		\begin{equation*}
			\lim_{\lambda\to 0}    \norm {\omega_{0,\lambda}-\Omega_0}_{ L^\infty (\mathbb R^2)}  =0,
		\end{equation*}
		then it holds that 
		\begin{equation*}
			\lim_{\lambda\to 0}   \norm {\omega_{\lambda}-\Omega}_{L^\infty ([0,t]; L^\infty (\mathbb R^2))}  =0,
		\end{equation*}
		for any $t>0$.
	\end{thm}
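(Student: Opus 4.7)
The plan is to follow the extrapolation compactness scheme: first propagate the initial velocity-level convergence at the level of a Euler-type stream function of the difference, then upgrade to vorticity convergence in $L^p$ by a Littlewood--Paley decomposition that exploits the evanescence of high Fourier modes.

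Write $h_\lambda := \omega_\lambda - \Omega$ and split the velocity error as
\[
u_\lambda - v = \nabla^\perp\bigl[(\lambda-\Delta)^{-1}-(-\Delta)^{-1}\bigr]\omega_\lambda + \nabla^\perp(-\Delta)^{-1}h_\lambda =: R_\lambda \omega_\lambda + v_h,
\]
where the symbol of $R_\lambda$ vanishes pointwise as $\lambda\to 0$. Subtracting the two vorticity equations gives $\partial_t h_\lambda + u_\lambda\cdot\nabla h_\lambda = -(R_\lambda\omega_\lambda + v_h)\cdot\nabla\Omega$. Testing against the stream function $\Psi_\lambda := \Delta^{-1}h_\lambda$ and using the Yudovich symmetrization together with the uniform $L^1\cap L^\infty$ transport bounds for $\omega_\lambda$ and $\Omega$, I expect a Gronwall--Osgood inequality of the form
\[
\tfrac{d}{dt}\|\nabla\Psi_\lambda\|_{L^2}^2 \leq C\,\Phi\bigl(\|\nabla\Psi_\lambda\|_{L^2}^2\bigr) + \varepsilon(\lambda),
\]
with $\Phi$ an Osgood modulus and $\varepsilon(\lambda)\to 0$ coming from the pointwise decay of the symbol of $R_\lambda$ tested against $\nabla\Omega$. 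Combined with the initial-data hypothesis this yields $\nabla^\perp\Delta^{-1}h_\lambda\to 0$ in $L^\infty_t L^2_x$.

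The extrapolation step upgrades this low-regularity convergence to $L^p$, $p\in(1,\infty)$: decomposing $h_\lambda = S_N h_\lambda + (\id-S_N)h_\lambda$ with a Littlewood--Paley cutoff at frequency scale $2^N$, the low-frequency block is controlled at fixed $N$ by Bernstein interpolation between $\|\nabla^\perp\Delta^{-1}h_\lambda\|_{L^2}$ (which vanishes) and the uniform $L^\infty$ bound on $h_\lambda$, while the high-frequency tail is made small uniformly in $\lambda$ by exploiting the evanescence of high Fourier modes of the transported vorticity --- a byproduct of the Yudovich-class bounds combined with the uniform log-Lipschitz regularity of $u_\lambda$. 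Choosing $N = N(\lambda)\to\infty$ slowly enough then closes the argument for $p\in(1,\infty)$. The velocity statement in $L^2\cap L^\infty$ follows by summing the dyadic blocks of $\nabla^\perp\Delta^{-1}h_\lambda$: low blocks are small by the $L^2$ convergence just obtained, while high blocks are summable in $L^\infty$ via Bernstein and the uniform $\|h_\lambda\|_{L^\infty}$ bound. Finally, the endpoint $L^\infty_{t,x}$ vorticity statement under the continuity hypothesis is obtained by propagating a uniform (in $\lambda$) modulus of continuity for $\omega_\lambda$ along the log-Lipschitz flow generated by $u_\lambda$, and then upgrading the $L^p$ convergence to $L^\infty$ via equicontinuity.

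The main obstacle is the absence of a velocity formulation for \eqref{QGSW}: the operator $\nabla^\perp(\lambda-\Delta)^{-1}$ does not interact cleanly with endpoint Lebesgue spaces, and classical Bernstein-type inequalities degenerate at $p=\infty$. Controlling the high-frequency evanescence used in the extrapolation step therefore requires Littlewood--Paley tools in endpoint Besov-type settings tailored to the $\lambda$-dependent symbol of the QGSW Biot--Savart operator; this is where I expect the bulk of the technical work to lie.
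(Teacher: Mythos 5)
The overall architecture of your proposal—propagate a low-regularity convergence, then use an extrapolation-compactness argument to upgrade to $L^p$—matches the paper's three-level scheme, and your treatment of the low-frequency blocks by Bernstein and of the velocity $L^2\cap L^\infty$ convergence by dyadic summation is correct in spirit. However, there are two substantive issues.

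First, the key technical step is glossed over. You assert that the high-frequency tail $\|(\id - S_N)h_\lambda\|_{L^2}$ is made small uniformly by ``exploiting the evanescence of high Fourier modes of the transported vorticity --- a byproduct of the Yudovich-class bounds combined with the uniform log-Lipschitz regularity of $u_\lambda$.'' This is precisely what is not true, and is the crux of the entire proof. A family of vorticities uniformly bounded in $L^1\cap L^\infty$ and transported by log-Lipschitz flows is \emph{not} automatically equicontinuous in $L^2$ (consider vortex patches, whose Fourier transforms have slowly decaying tails). What you need is that the $L^2$-compactness of the data $\omega_{0,\lambda}\to\Omega_0$ propagates in time, and this is a genuine theorem. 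The paper establishes it via Proposition~\ref{prop:high:transport}: a quantitative energy estimate on $\|\sqrt{\id-\widetilde S_0}\,g\|_{L^2}$ for a general transport equation showing that the high modes of $g(t)$ are controlled by the high modes of the inputs plus \emph{localized} (annular) blocks of $g$ itself, which in turn are small by the Level-One low-regularity convergence (this is the role of the frequency threshold $\Theta_\lambda$ in Proposition~\ref{proposition:reduction}). Alternatively one could push the flow maps (as the paper does for the $L^\infty$ endpoint), but this requires first proving uniform convergence $u_\lambda\to v$ in $L^\infty_{t,x}$, which itself needs the Besov interpolation $\|u_\lambda - v\|_{L^\infty}\lesssim\|\omega_\lambda - \Omega\|_{B^{-1}_{2,\infty}}^{1/2}\|\omega_\lambda - \Omega\|_{L^\infty}^{1/2}$. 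In either form, the high-mode evanescence is a consequence to be proven, not an input you can invoke; as written, your upgrade step presupposes essentially the conclusion.

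Second, your Level One follows a different route than the paper's. You propose testing the difference equation against the stream function $\Psi_\lambda=\Delta^{-1}h_\lambda$, invoking ``Yudovich symmetrization.'' Be aware that \eqref{QGSW} has no velocity formulation, so the transport term $\int u_\lambda\cdot\nabla h_\lambda\,\Psi_\lambda\,dx$ does \emph{not} vanish: integrating by parts leaves a term of the form $\int[(\nabla u_\lambda)^\mathsf{T}\nabla\Psi_\lambda]\cdot\nabla\Psi_\lambda\,dx$ which must be controlled by an Osgood-type estimate. More importantly, the paper explicitly remarks (in the discussion of the rate of convergence) that the authors were unable to close a direct $L^2$ stability for the velocity and are forced to work in the weaker space $X^{-1}_{2,\infty}$ (a mixed space: $\dot H^{-1}$-type at low frequencies, $B^{-1}_{2,\infty}$ at high ones), using the commutator estimate of Lemma~\ref{lemma:commutator}. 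You should either verify carefully that your stream-function energy estimate actually closes, or expect to land only in $X^{-1}_{2,\infty}$. Finally, ``the symbol of $R_\lambda$ vanishes pointwise'' is too weak to produce a quantitative $\varepsilon(\lambda)$: what is actually needed (and nontrivial) is the uniform kernel bound $\|\lambda\nabla^\perp(-\Delta)^{-1}(\lambda-\Delta)^{-1}\omega\|_{L^\infty\cap\dot H^1}\lesssim\lambda^{1/2}\|\omega\|_{L^1}$, which is Lemma~\ref{lemma:error} of the paper and requires an explicit $L^1$ estimate on the Fourier symbol.
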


	Given the preceding theorem, a natural question that arises is about the sharpness of the additional continuity assumption on the initial data, which ensures the convergence in   $L^\infty_{t,x}$. In the following theorem, we confirm that this extra assumption is indeed necessary, thereby establishing the sharpness of the conditions under which Theorem \ref{thm:main} holds.

	\begin{thm}[Sharpness of the Endpoint Convergence]\label{thm:main2} 
		 There is a class of discontinuous functions such that, for any $\Omega_0 $ belonging to such a class, the associated solutions $\omega_\lambda$ and $\Omega$ of \eqref{QGSW} and \eqref{Euler}, respectively,  with the same initial data $\omega_{0,\lambda}=\Omega_0$, enjoy the lower bound 
		 \begin{equation*}
		 	\norm {\omega_{\lambda}-\Omega}_{L^\infty ((t_1,t_2);L^\infty(\mathbb R^2))} \geq 1,
		 \end{equation*}
		 for any $t_2> t_1>0$ and $\lambda>0.$
	\end{thm}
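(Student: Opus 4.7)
My plan is to exhibit the class of discontinuous initial data as characteristic functions $\Omega_0 = \mathbf{1}_D$, where $D \subset \mathbb{R}^2$ is a bounded, simply-connected, non-circular domain which is a uniformly rotating V-state of \eqref{Euler} with some non-zero angular velocity $\Omega_E$ (concrete members: a Kirchhoff ellipse, or any Burbea $m$-fold V-state with $m \geq 2$ away from the Rankine bifurcation point). Setting $\omega_{0,\lambda} = \Omega_0 = \mathbf{1}_D$, Yudovich's uniqueness pins down the Euler solution as the explicit rigidly rotating patch $\Omega(t,x) = \mathbf{1}_{R_{\Omega_E t}(D)}(x)$. On the other side, the log-Lipschitz \eqref{QGSW} Yudovich velocity still generates a measure-preserving flow, so $\omega_\lambda(t) = \mathbf{1}_{D_\lambda(t)}$ remains a patch. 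Because both solutions are $\{0,1\}$-valued, the pointwise difference $\omega_\lambda - \Omega$ takes only the values $-1, 0, 1$, and the inequality $\|\omega_\lambda - \Omega\|_{L^\infty((t_1,t_2);L^\infty)} \geq 1$ is equivalent to $D_\lambda(t)$ and $R_{\Omega_E t}(D)$ failing to coincide on a subset of the time interval with positive Lebesgue measure.

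First I would argue by contradiction: assume $\omega_\lambda(t) = \Omega(t)$ for a.e.\ $t$ in some subinterval $J \subseteq (t_1, t_2)$ of positive length. Then $\omega_\lambda$ is itself a rigidly rotating patch solution of \eqref{QGSW} with the same shape $D$ and the same angular velocity $\Omega_E$. Writing the two V-state boundary conditions in terms of the stream functions $\psi = \Delta^{-1}\mathbf{1}_D$ and $\psi_\lambda = (\lambda - \Delta)^{-1}\mathbf{1}_D$, both $\psi - \tfrac{1}{2}\Omega_E |x|^2$ and $\psi_\lambda - \tfrac{1}{2}\Omega_E |x|^2$ must be constant along $\partial D$. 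Subtracting yields the overdetermined relation
\[
h_\lambda \bydef \psi_\lambda - \psi = \bigl((\lambda - \Delta)^{-1} - \Delta^{-1}\bigr)\mathbf{1}_D \equiv \text{const on } \partial D,
\]
while simultaneously $h_\lambda$ satisfies the fourth-order elliptic equation $\Delta(\lambda - \Delta) h_\lambda = \lambda\,\mathbf{1}_D$ on $\mathbb{R}^2$ with decay at infinity.

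The main obstacle will be closing the argument by a rigidity statement forcing $D$ to be a disk. I expect this to come from the theory of stationary and uniformly rotating solutions of active scalar equations: combining a Serrin-type moving-plane argument applied to the Green's function of $-\Delta(\lambda - \Delta)$ with the local bifurcation analysis of \cite{renault}, which parametrises the \eqref{QGSW} V-state branches issued from the Rankine vortices and shows that for every $\lambda > 0$ these branches are distinct from the classical \eqref{Euler} V-state branches (their intersection at a prescribed angular velocity reducing to the disks only). Once this rigidity is established, the non-circularity of $D$ contradicts the standing assumption, so $D_\lambda(t) \neq R_{\Omega_E t}(D)$ on a positive-measure subset of $(t_1, t_2)$; the essential supremum of $|\omega_\lambda - \Omega|$ there is exactly $1$, which delivers the claimed lower bound uniformly in $\lambda > 0$ and in $(t_1, t_2)$.
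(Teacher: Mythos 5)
Your overall strategy parallels the paper's: take a patch initial datum, argue by contradiction that if $\omega_\lambda$ and $\Omega$ coincided on a time interval then the difference $(\lambda-\Delta)^{-1}\mathds{1}_D-(-\Delta)^{-1}\mathds{1}_D$ would be constant on $\partial D$, and then invoke a rigidity result to force $D$ to be a disc. However, there is a genuine gap precisely at the rigidity step, which is the mathematical crux of the theorem. You acknowledge this yourself (``The main obstacle will be closing the argument\dots I expect this to come from\dots''), but the tools you propose are either vague or inapplicable. A Serrin-type moving-plane argument is designed for overdetermined boundary-value problems on bounded domains; here $h_\lambda$ is not a solution of a BVP on $D$ but a globally-defined potential $\bigl((\lambda-\Delta)^{-1}+\Delta^{-1}\bigr)\mathds{1}_D$ on $\mathbb{R}^2$, and the constancy on $\partial D$ is a constraint on the \emph{shape} $D$, not a boundary condition for $h_\lambda$. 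The bifurcation analysis of \cite{renault} gives \emph{existence} of QGSW V-states near the Rankine vortex; it does not classify all domains $D$ for which $\mathcal K_\lambda*\mathds{1}_D$ is constant on $\partial D$. The paper instead invokes the classification result of \cite{GPSY21} (recast here as Theorem~\ref{thm:disc}), and the real work is verifying that the kernel of $\Delta^{-1}+(\lambda-\Delta)^{-1}$ satisfies the required monotonicity Condition~(HD). This is Proposition~\ref{prop:kernel:A}, which requires a concrete computation with the modified Bessel function $K_0$ showing $K_0'(r)+1/r>0$. Without this verification, or an equivalent rigidity proof, the argument does not close.

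A second, smaller issue is that you restrict the class of initial data to non-circular rotating V-states of Euler (Kirchhoff ellipses, Burbea branches). This is unnecessarily narrow and also requires invoking nontrivial existence theory just to populate the class. The paper's argument works for \emph{any} non-circular simply connected domain with rectifiable boundary: rather than insisting that $\Omega(t)$ be a rigid rotation, it works directly with the transported patches $\Phi_\lambda(t,D)$ and $\Phi(t,D)$, observes that their volumes are conserved, and derives $\mathcal K_\lambda*\mathds{1}_{\Phi(t,D)}=\text{const}$ on $\partial\Phi(t,D)$ by subtracting the two equations. This is both more general and avoids the need to separately impose identical angular velocities. Your observation that the difference of two indicator functions takes values in $\{-1,0,1\}$ and that the temporal $L^\infty$ norm is handled via continuity of the symmetric-difference volume is correct and matches the paper's closing step.
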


	Before diving into the technical discussion of the challenges and our method of proof, let us first highlight some key observations regarding the preceding statements.
		 
		 \begin{rem}[Yudovich solutions] Throughout the paper, we focus our analysis on Yudovich-type solution $\omega_\lambda$ to \eqref{QGSW}. It can be shown that for any initial data $\omega_{0,\lambda}$,  as specified above, there is a unique global solution of \eqref{QGSW} enjoying the bound
		 \begin{equation*}
		 	\omega_\lambda \in L^\infty(\mathbb R^+;L^1\cap L^\infty(\mathbb R^2)), \quad \text{uniformly in } \lambda \in (0,1) .
		 \end{equation*} 
		 Moreover, this solution is continuous in time with values in $L^p(\mathbb R^2)$ for any finite exponent $p$. The uniqueness aspect of this statement can be derived using the stability arguments presented later on during the proof of Theorem \ref{thm:main}. Another approach to proving uniqueness in the Yudovich class involves examining the convergence of the flow maps associated with two distinct solutions, as done in \cite{GG24} for general   active scalar equations. For an adaptation of the techniques from the latter work to the quasi-geostrophic shallow-water equation, we refer to \cite{MMO25}. We also refer to \cite{HZHH23} for an application of similar techniques   to a system of active scalar equations that can be viewed as a combination of \eqref{QGSW} and \eqref{Euler}.
		 \end{rem}
		 
	\begin{rem}[The torus case]
		It is to be emphasized through our discussion below that the statement of Theorem \ref{thm:main} holds in the two-dimensional torus as well, without requiring any substantial amendments to our proof. This highlights the robustness of our method, which is based on Fourier analysis---a point that we will elaborate on further in the discussion below	.
		\end{rem}
	
	\begin{rem}[Rate of convergence] According to our proof below, the convergence of the velocities, with the same initial data $\omega_{0,\lambda}=\Omega_0$, occurs with a rate that follows from the bound 
	\begin{equation*}
		 \norm {\nabla^\perp \Delta^{-1}(  \omega_{\lambda}-\Omega)}_{L^\infty ([0,T]; L^2 (\mathbb R^2))} \lesssim _T \left(   C_{*} \lambda T \right)  ^{\frac{1}{2} \exp (-C_{*}T)} \log ^{\frac{1}{2}}\left( \frac{1}{  \lambda T }\right) ,
	\end{equation*}
		for sufficiently small $\lambda>0$, where    $C_{*}>0$ is a constant depending  on Lebesuge norms of the initial data. A more precise statement can be found in Theorem \ref{thm:conv:low:reg}, below. Notably, the   algebraic  component of this convergence rate is identical to  that in the vanishing viscosity limit of the Navier--Stokes solution in the Yudovich class. 
				
		In the case of \eqref{QGSW}, the additional logarithmic error in the above quantitative estimate stems from our inability to directly establish a stability result in the $L^2_x$-space for the velocity field. More specifically, this error arises due to proving the convergence of velocities, at first, in a negative Besov space that scales as $B^{-1}_{2,\infty}$,  combined with a functional inequality of the form:
		\begin{equation*}
			\norm {f}_{L^2(\mathbb R^2)} \lesssim \norm f_{B^{-1}_{2,\infty}} \log ^\frac{1}{2}\left(\frac{\norm { f}_{ H^1}}{\norm {f}_{B^{-1}_{2,\infty}}}\right).
		\end{equation*}
		A plausible interpretation of this additional error, compared to the vanishing viscosity limit, is the absence of a velocity formulation for \eqref{QGSW}, unlike in the Navier--Stokes and Euler equations.

		On the other hand, we do not expect any explicit rate of convergence for the vorticities in Lebesgue spaces without additional regularity of the initial data. For example, in the case of vortex patches, this specific type of data possesses extra Besov-type regularity, namely $B^{s}_{2,\infty}(\mathbb R^2)$, for some $s>0$ (see \cite[Lemma 3.2]{CW96}). In this case, a straightforward interpolation inequality
		\begin{equation*}
			B^{-1}_{2,\infty} \cap B^s_{2,\infty} \hookrightarrow L^2(\mathbb R^2)
		\end{equation*}
		 yields the convergence rate
        \begin{equation*}
			\norm {\omega_{\lambda}-\Omega}_{L^\infty ([0,t]; L^2(\mathbb R^2))} =_t \mathcal O \left( \lambda ^{\frac{s \exp (-2C_{*} T)}{2(1+s \exp (-C_{*} T))} } \right),
		\end{equation*}
which is  comparable to the findings from \cite{CDE} on the vanishing viscosity limit in Yudovich class.
 
On top of that, it is worth emphasizing that this rate can be slightly improved using the method of proof we present below, which is based on the {\em  extrapolation compactness} technique introduced by Ars\'enio and the first author in \cite{ah2}. In line with \cite[Theorem 2.1]{ah2} concerning the rate of convergence of vanishing viscosity of Yudovich solutions to the Navier--Stokes equation, the following improved rate of convergence can be derived 
        \begin{equation*}
			\norm {\omega_{\lambda}-\Omega}_{L^\infty ([0,T]; L^2(\mathbb R^2))} =_T \mathcal O \left(  \frac{\lambda ^{ s \exp (-2C_{*} T)  }}{|\log \lambda|} \right)^{\frac{1}{2(1+s \exp (-C_{*} T))}}, \quad \text{as } \lambda \to 0,
		\end{equation*}
by  building on the quantitative estimate of the very high frequencies of solutions to general transport equations from Proposition \ref{prop:high:transport}, below. We choose to not address the full details of this matter in the present work.

	\end{rem}

\begin{rem}[Sharpness in the endpoint case] It is to be emphasized later on that  Theorem \ref{thm:main2} holds for any initial vorticity of the form 
\begin{equation*}
	\omega_0=\mathds 1_{D},
\end{equation*} 
where $D\subset \mathbb R^2$ is any nonempty simply connected domain (or a finite union of simply connected domains), with a rectifiable boundary, that is different than the disc. A further discussion about the non-convergence in the $L^\infty_{t,x}$ space in this case is deferred to the subsequent paragraph.
\end{rem}

	\subsection{Challenges and  Methods of proof}\label{section:challenge_and_proof}
	
	For clarity, we briefly outline here the main challenges encountered in proving Theorems \ref{thm:main} and \ref{thm:main2}, as well as the key elements of our approach. The proof of Theorem \ref{thm:main} will be provided in Section \ref{section:proof:thm1}, formulated as a consequence of a series of intermediate results developed in Section \ref{section:three:levels}. On the other hand,  the proof of Theorem \ref{thm:main2} will be discussed in Section \ref{section:proof:thm2}, after establishing the necessary prerequisites used in its demonstration. 
	
	For convenience, we divide this discussion into two parts, each offering concise insights into the methods of proof  for Theorems \ref{thm:main} and \ref{thm:main2}, respectively.
	
	\subsubsection*{Strong Limit in the small inverse Rossby regime} 
	
	The overall approach to proving Theorem \ref{thm:main} draws on insights from the procedure  introduced in \cite{ah2}, which has been used to study both the inviscid limit from the Navier--Stokes to Euler equations and the non-relativistic limit from the Euler--Maxwell system to an inviscid MHD system, all within the Yudovich class of solutions.

The proof roadmap is structured in three stages, each detailed in the Subsections \ref{section:level:one}, \ref{section:level:two} and \ref{section:level:three}, respectively. The strategy itself is grounded in a simple abstract observation, known as {\em Extrapolation Compactness}, first introduced in \cite{ah2}; see Lemma 1.5 therein.

	In \underline{\textbf{Level One}}, we establish a convergence result in a lower regularity space. A key challenge here, compared to the inviscid limit problem, is the absence of a velocity reformulation for \eqref{QGSW}. Despite this, we manage to show the convergence of the vorticities in a Besov-type space with negative regularity, which is strictly embedded between $H^{-1}$ and $B^{-1}_{2,\infty}$. Roughly speaking, in terms of scaling, this is comparable to the classical vanishing viscosity limit, see \cite{C98, CW96, MB01} for instance.

Achieving this strong convergence relies on a refined combination of several Fourier and Littlewood--Paley techniques, including commutator-type estimates in endpoint settings. It is important to emphasize that the convergence at this level is accompanied by a rate that is, once again, comparable to the vanishing viscosity limit in the context of Navier--Stokes equations, in terms of scaling. This ``sharp'' rate arises from a detailed analysis of the velocity-vorticity relationship in the solution to \eqref{QGSW}. For more comprehensive details, see Section \ref{section:level:one}.
	
	In \underline{\textbf{Level Two}}, building on the preceding convergence result, we observe that the convergence in a higher regularity space---specifically, the convergence in the Lebesgue space $L^\infty_tL^2_x$---is equivalent to the vanishing of certain very high frequencies in $L^\infty_tL^2_x$ for the solution of \eqref{QGSW}. This is the content of Proposition \ref{proposition:reduction}.

	In \underline{\textbf{Level Three}}, we begin with addressing a slightly more general problem. More precisely, we examine  the behavior of the high frequencies of a general transport equation:
	\begin{equation*} 
		\partial_t g + h\cdot \nabla g = F, \qquad g|_{t=0}=g_0,
	\end{equation*}
	where $F$ is a source term (potentially dependent on  $g$), and $h$ is a divergence-free vector field. One could think of the velocity field  $h$ as potentially being recovered from the solution $g$ via some form of Biot--Savart law.

	Assuming   that   $h$, $g_0$ and/or $F$, and eventually $g$, depend on some parameter $\delta\in (0,1)$ in such a way that the dependence of the   problem's inputs (i.e., $F$, $g_0$, and possibly $h$ in a rough sense) is compact in Lebesgue spaces,  we then investigate whether this compactness property can be extended to $g(t)$, for any positive times $t>0$.
	
	In Proposition \ref{prop:high:transport}, we provide a quantitative bound on the behavior of the very high frequencies of   $g$, which is crucial for deriving a complete statement on the compactness of $g(t)$,  for any positive times $t>0$, under reasonable assumptions on the velocity field $h$, the source term $F$ and the initial datum $g_0.$
	
	The takeaway from Proposition \ref{prop:high:transport} on the more general problem described above enables us to deduce that the family 
		of solutions $\left( \mathds 1_{|D|\geq \Theta_\lambda} \omega_\lambda  \right)_{\lambda \ll1}$  to \eqref{QGSW}
		is strongly compact in $L^\infty_\loc  (\mathbb R^+ ;L^2(\mathbb R^2))$, converging to zero in $L^\infty  ([0,T];L^2(\mathbb R^2))$, for any finite time $T>0$ and any family of parameters $(\Theta_\lambda)_{\lambda\in (0,1)}$ satisfying 
		\begin{equation*}
			\lim _{\lambda\to 0} \Theta_\lambda = \infty \quad \text{ and } \quad \lim_{\lambda \to 0} \Theta_\lambda \left(  \norm {\nabla^\perp \Delta^{-1}(  \omega_{0,\lambda}-\Omega_0)}_{ L^2(\mathbb R^2)} ^2 + C_{**} \lambda  T \right)  ^{\frac{1}{2} \exp (-C_{**}T)}=0,
		\end{equation*}
where $C_{**}>0$ is a constant depending on the size of the initial data uniformly for $\lambda \ll1$.

Given this, the convergence 
\begin{equation*}
			\lim_{\lambda\to 0}   \norm {\omega_{\lambda}-\Omega}_{L^\infty ([0,t]; L^p(\mathbb R^2))}  =0,
		\end{equation*}
		for $p=2$ is obtained first,  by employing Proposition \ref{proposition:reduction}, while the case $p\in (1,\infty)$ follows thereafter by interpolation.
		
 Establishing the convergence in the endpoint case $p=\infty$, under the additional assumption that $\Omega_0$ is continuous, comes as a consequence of the convergence of the flow map of \eqref{QGSW} to the one of \eqref{Euler} in strong topologies. Here, a convergence of the flow maps  in $L^\infty_{t,x}$ is sufficient, although   not optimal. This convergence, in turn, follows from the strong convergence  of the velocities in suitable functional spaces.

\subsubsection*{Sharpness of the Endpoint Convergence} 

The proof of Theorem \ref{thm:main2} is based on a result of independent interest. A relevant question that can be posed in this direction is the following: {\em  Given an initial datum $\Omega_0=\omega_{0,\lambda}$, how does the solution $\Omega(t)$ of \eqref{QGSW} differs from the solution $\omega_{\lambda}(t)$ of \eqref{Euler} at positive times $t>0$, for  $\lambda>0$?} 

Throughout the proof of Theorem \ref{thm:main2}, we address this problem in the context of vortex patches. We show that, in this case, if the solutions $\Omega$ and $\omega_{\lambda}$ were to be identical on a given arbitrary  interval of time $(t_1,t_2)$, then we are led to the question of   characterizing   solutions of the equation 
  \begin{equation}\label{EQ:boudary}
				\nabla ^\perp \left( \mathcal K_\lambda*  \mathds 1_{\Phi(t,D)} \right) \cdot \vec n_{t} =0, \qquad \text{on } \partial \Phi(t,D), \qquad \text{for all } t\in (t_1,t_2),
			\end{equation}
			where $D$ is the domain associated with the initial patch,  $\Phi (t,D)$ is the transported domain  by the flow map associated with the solution $\Omega=\omega_{\lambda}$, while $\vec n_t$ refers to the outward normal vector to the boundary $ \partial \Phi (t,D)$, and     $\mathcal K_\lambda$  is the kernel associated with the operator 
			\begin{equation*}
				  \Delta^{-1} +  (\lambda-\Delta)^{-1}  .
			\end{equation*}
We then draw insight from a known result from \cite{GPSY21} showing that, if the kernel $\mathcal K_\lambda$ and the domain $D$ satisfy some reasonable conditions---proven to hold in our case in the discussion at the beginning of Section \ref{section:proof:thm2}---then the only solution of \eqref{EQ:boudary} is  (a translated) disc. This rules out the possibility of having identical solutions, if one assumes  initially that the  patch at time $t=0$ is different than the stationary solution associated with a disc.

Finally, obtaining the lower bound stated in Theorem \ref{thm:main2} follows by estimating  the difference between two distinct patches, which is the only remaining scenario based on the preceding analysis.

	\section{Tool Box} 
	 
	 In this section, we recall several fundamental tools that will be utilized in the proofs of our main theorem. We also establish some key lemmas that will come in handy in the subsequent sections, later on.
	 
	 Before proceeding any further, let us fix some notations that will be used throughout the paper. The inequality $A\lesssim B$ refers to $A \leq CB$ for some constant $C>0$ that is independent of $A$ and $B$. The symbol $[P,Q]$ denotes the commutator of two operators $P$ and $Q$, and it is given by 
	 \begin{equation*}
	 	[P,Q]= PQ - QP.
	 \end{equation*}
	 Finally, for a given smooth function $\phi$, the operator $\phi (D)$ denotes the Fourier multiplier by the symbol $\phi (\xi)$, where $\xi$ refers to the Fourier variable. It can also be interpreted as the convolution operator with the inverse Fourier transform of $\phi$.
	
\subsection{Littlewood-Paley theory and Besov spaces}\label{section:littlewood_paley}
 Throughout the paper, we   rely on a number of classical results concerning Littlewood--Paley theory and  Besov spaces. The building blocks of all of that is the dyadic partitions of unity 
\begin{equation*}
	1= \psi(\xi)+\sum_{k=0}^\infty \varphi\left(2^{-k}\xi\right),
	\quad\text{for all }\xi\in\mathbb{R}^d,
\end{equation*}
where the functions in the preceding identity are $C^\infty_c $-smooth and enjoy the properties that 
\begin{equation*}
	\psi,\varphi\geq 0\text{ are radial},
	\quad\supp\psi\subset\left\{|\xi|\leq \frac{4}{3}\right\},
	\quad\supp\varphi\subset\left\{\frac{3}{4}\leq |\xi|\leq \frac{8}{3}\right\},
\end{equation*}
and 
\begin{equation*}
	\psi|_{B(0,1)}=1.
\end{equation*}

Given that, we introduce the Fourier-multiplier operators, defined on the space tempered distributions $\mathcal S'(\mathbb R^d)$ 
\begin{equation*}
	S_k,\,\Delta_k:
	\mathcal{S}'\left(\mathbb{R}^d\right)\rightarrow\mathcal{S}'\left(\mathbb{R}^d\right),
\end{equation*}
with $k\in\mathbb{Z}$, as 
\begin{equation*} 
	S_k f\bydef \psi_k(D) f
	= \left(\mathcal{F}^{-1}\psi_k\right)*f
	\quad\text{and}\quad
	\Delta_k f\bydef \varphi_k(D)f
	= \left(\mathcal{F}^{-1}\varphi_k\right)*f,
\end{equation*}
where the rescaled cutoffs are defined, in the inhomogeneous context, as 
\begin{equation*}
	\psi _{k}(\xi)\bydef\psi\left(2^{-k}\xi\right),\text{ for } k\geq 0, \qquad \text{and}\quad  \psi_{k}(\xi)\bydef 0, \quad    \text{ for } k\leq -1,
\end{equation*}
while 
\begin{equation*}
	\varphi_k (\xi)\bydef\psi\left(2^{-k}\xi\right), 
	 \text{ for } k\geq 0, \qquad \text{and}\quad  \varphi_k (\xi)\bydef 0, \quad    \text{ for } k\leq -2,
\end{equation*}
with   
\begin{equation*}
	\varphi _{-1} (\xi) \bydef \psi (\xi).
\end{equation*}
Note in passing that one can make such a choice of cutoff functions such that 
\begin{equation*}
	S_k = \sum_{j\leq k-1} \Delta_j, \quad \text{in } \mathcal S'(\mathbb R^d).
\end{equation*}
The Littlewood--Paley decomposition of $f\in\mathcal{S}'$ is then given by
\begin{equation*}
	f= S_0 f+\sum_{k=0}^\infty\Delta_{ k}f,
\end{equation*}
where the series has a sense in $\mathcal{S}'(\mathbb R^d)$.

For any $s \in \mathbb{R}$ and $1\leq p,q\leq \infty$, the inhomogeneous Besov space $ B^{s}_{p,q}\left(\mathbb{R}^d\right)$ is defined  as the subspace of tempered distributions   endowed with the norm
\begin{equation*}
	\left\|f\right\|_{ B^{s}_{p,q}\left(\mathbb{R}^d\right)}=
	\left(
	\sum_{k\in\mathbb{Z}} 2^{ksq}
	\left\|\Delta_{k}f\right\|_{L^p\left(\mathbb{R}^d\right)}^q\right)^\frac{1}{q},
\end{equation*}
if $q<\infty$, and
\begin{equation*}
	\left\|f\right\|_{ B^{s}_{p,q}\left(\mathbb{R}^d\right)}=
	\sup_{k\in\mathbb{Z}}\left(2^{ks}
	\left\|\Delta_{k}f\right\|_{L^p\left(\mathbb{R}^d\right)}\right),
\end{equation*}
if $q=\infty$. This space is a Banach space, for any value of parameters $s$, $p$ and $q$.  

For a more thorough discussion on the functional properties of Besov spaces, we refer to \cite{bcd11}.

\subsection{Technical lemmas} In this section, we compile and establish several key results that not only serve our analysis but are also of independent interest.

We begin with some commutator estimates which have long been a powerful tool in the analysis of incompressible ideal flows. In the configuration relevant to our study, the adapted version below will play a crucial role in establishing the convergence of vorticities in spaces of lower regularity.

While the general statement of this estimate is based on classical results (see \cite[Section 2.10]{bcd11}), we were unable to locate an identical version in the existing literature that directly applies to our context. Therefore, for the sake of clarity and completeness, we have chosen to provide a full, self-contained proof of the following lemma.
	\begin{lem}[Commutator estimate]\label{lemma:commutator} let $v$ be a divergence-free vector field. In any dimension $d\geq 2,$  	It holds that 
	\begin{equation*}
		[\Delta_j,v\cdot \nabla ] f = \mathcal I_j + \mathcal J_j,
	\end{equation*}
	where 
	\begin{equation*}
		\sup_{j \in \mathbb Z}\left( 2^{-j}\norm {\mathcal I_j}_{L^{p_1}(\mathbb R^d)} \right)\lesssim \norm {\nabla v}_{L^{m_1}(\mathbb R^d)} \norm f_{B^{-1 }_{{q_1},\infty } (\mathbb R^d)}   ,
	\end{equation*}
	and 
	\begin{equation*}
		\sup_{j \in \mathbb Z}\left( 2^{-j}\norm {\mathcal J_j}_{L^{p_2}(\mathbb R^d)} \right)\lesssim \norm {\nabla v}_{B^{0}_{m_2,\infty} (\mathbb R^d)} \norm f_{B^{-1}_{q_2,1}(\mathbb R^d)}   ,
	\end{equation*}
	for any   set of parameters in $[1,\infty]$ satisfying   
	\begin{equation*}
		\frac{1}{m_1} + \frac{1}{q_1}= \frac{1}{p_1}, \qquad \frac{1}{m_2} + \frac{1}{q_2}= \frac{1}{p_2}.
	\end{equation*}
	\end{lem}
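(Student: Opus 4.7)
The plan is to use the divergence-free condition on $v$ to rewrite the commutator with a gradient outside, and then decompose the inner commutator via Bony's paraproduct calculus. Since $\div v = 0$ yields $v\cdot\nabla g = \div(vg)$ for any $g$, one has
\[
[\Delta_j, v\cdot\nabla]f \;=\; \div\bigl(\Delta_j(vf) - v\,\Delta_j f\bigr) \;=\; \div\bigl([\Delta_j, v]f\bigr).
\]
Applying Bony's decomposition $vf = T_v f + T_f v + R(v,f)$ then gives
\[
[\Delta_j, v]f \;=\; [\Delta_j, T_v]f \;+\; \bigl(\Delta_j T_f v - T_{\Delta_j f}v\bigr) \;+\; \bigl(\Delta_j R(v,f) - R(v,\Delta_j f)\bigr),
\]
and I would set $\mathcal{I}_j := \div([\Delta_j, T_v]f)$ while collecting the divergences of the four remaining pieces into $\mathcal{J}_j$.

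For $\mathcal{I}_j$, I would use the standard localization $[\Delta_j, T_v]f = \sum_{|k-j|\leq N}[\Delta_j, S_{k-1}v]\,\Delta_k f$ and the integral representation
\[
[\Delta_j, S_{k-1}v]\,\Delta_k f(x) \;=\; \int 2^{jd} h\bigl(2^j(x-y)\bigr)\,\bigl(S_{k-1}v(y)-S_{k-1}v(x)\bigr)\,\Delta_k f(y)\,dy,
\]
with $h = \mathcal{F}^{-1}\varphi$ Schwartz. Writing $S_{k-1}v(y)-S_{k-1}v(x)=\int_0^1 (y-x)\cdot\nabla S_{k-1}v(x+t(y-x))\,dt$ extracts a factor $2^{-j}$ from the kernel, and Young's and Hölder's inequalities (with $1/m_1+1/q_1=1/p_1$) combined with $\|\Delta_k f\|_{L^{q_1}}\lesssim 2^{-k}\|f\|_{B^{-1}_{q_1,\infty}}$ yield
\[
\|[\Delta_j, S_{k-1}v]\,\Delta_k f\|_{L^{p_1}}\;\lesssim\;2^{-j-k}\,\|\nabla v\|_{L^{m_1}}\,\|f\|_{B^{-1}_{q_1,\infty}}.
\]
Summing over the finitely many $k$ with $|k-j|\leq N$ and controlling the outer $\div$ by a Bernstein factor $2^j$ on the frequency-localized output delivers the claimed bound on $\mathcal{I}_j$.

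For $\mathcal{J}_j$, each of the four remaining pieces---$\Delta_j T_f v$, $T_{\Delta_j f}v$, $\Delta_j R(v,f)$, $R(v,\Delta_j f)$---is a sum over a dyadic scale $k$ in which one factor is of the form $\Delta_k v$ (rewritten as $2^{-k}\cdot 2^k\Delta_k v$ to extract $\nabla v$) and the other is a block of $f$. Using $\|2^k\Delta_k v\|_{L^{m_2}}\lesssim \|\nabla v\|_{B^{0}_{m_2,\infty}}$ uniformly in $k$, together with the absolute summability $\sum_k 2^{-k}\|\Delta_k f\|_{L^{q_2}}\lesssim \|f\|_{B^{-1}_{q_2,1}}$ coming from the $\ell^1$ index on the $f$-side, and closing with the outer Bernstein factor $2^j$, one obtains the bound on $\mathcal{J}_j$. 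For the high-high remainder $R(v,f)$, where the index $k$ runs over all $k\gtrsim j$, it is precisely the $\ell^1$ summability that makes the estimate close, as no decay in $k$ is available from the $v$-side.

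The main obstacle I anticipate lies in the remainder term: the paraproducts $T_v f$ and $T_f v$ enjoy a clean frequency localization that is handled by the commutator trick (for the former) or by direct Hölder (for the latter) with only $\ell^\infty$ summability, but $R(v,f)$ loses this localization, and its treatment requires the $\ell^1$ summability encoded in the $B^{-1}_{q_2,1}$ norm of $f$. This structural dichotomy is precisely what forces the splitting $\mathcal{I}_j + \mathcal{J}_j$ in the statement and dictates which of the two pieces must absorb the remainder $R(v,f)$.
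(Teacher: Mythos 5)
Your proof takes essentially the same route as the paper's: Bony's decomposition of $vf$ and $v\Delta_j f$, the standard commutator estimate (the first-order Taylor trick, i.e.\ \cite[Lemma 2.97]{bcd11}) for the low-high paraproduct in $v$, and a critical $s_1+s_2=0$ product estimate for the high-high remainder $R(v,f)$, which is exactly what forces the $\ell^1$ index on $f$ in the $\mathcal{J}_j$ bound. The only difference is organizational — the paper distributes the divergence inside and isolates a third piece $\mathcal{K}_j$ (your $T_f v$ blocks and $R(v,\Delta_j f)$) which it observes to satisfy the strictly stronger bound $\|\nabla v\|_{B^0_{m,\infty}}\|f\|_{B^{-1}_{q,\infty}}$, whereas you absorb those into $\mathcal{J}_j$; both are fine. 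One small slip worth fixing: the Besov inequality you invoke should be $\|\Delta_k f\|_{L^{q_1}}\leq 2^{k}\|f\|_{B^{-1}_{q_1,\infty}}$, not $2^{-k}$, so the intermediate bound on $\|[\Delta_j,S_{k-1}v]\Delta_k f\|_{L^{p_1}}$ should carry the factor $2^{-j+k}$ rather than $2^{-j-k}$; since $|k-j|\leq N$ this does not change the conclusion after taking the supremum in $j$, but the inequality as written is false.
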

	
	\begin{proof} For simplicity, we are going to present the whole proof with the notation 
\begin{equation*}
	(m,p,q)=(m_1,p_1,q_1)=(m_2,p_2,q_2).
\end{equation*}
		We shall establish a slightly stronger estimate. More precisely, in terms of Bony's paraproduct decomposition (see \cite[Section 2.8.1]{bcd11}),  we consider the decomposition    
		\begin{equation*}
			  [\Delta_j,v\cdot \nabla ] f = \mathcal I_j + \mathcal J_j + \mathcal K_j,
		\end{equation*}
		where 
		\begin{equation*} 
				\mathcal I_{j}= [T_{v},\Delta_j] \nabla  f, 
			\qquad 
				\mathcal J_{j}= - \Delta_j \div   R( f  ,  v),
		\end{equation*}
and
        \begin{equation*}
			\begin{aligned}
				 \mathcal K_j  = T_{ \nabla  \Delta_j  f}  v  - \Delta_j T_{\nabla f}v +  \div   R( \Delta_j f  ,  v),
			\end{aligned}
		\end{equation*}
		and we claim that $\mathcal I_j$ and $\mathcal J_j$ satisfy the result of the lemma, whereas $\mathcal K_j$ enjoys the stronger bound 
		\begin{equation*}
		\sup_{j \in \mathbb Z}\left( 2^{-j}\norm {\mathcal K_j}_{L^{p}} \right)\lesssim \norm {\nabla v}_{B^{0}_{m,\infty} } \norm f_{B^{-1}_{q,\infty}}   .
	\end{equation*} 
		For the first term, we first notice that  
		\begin{equation*}
			\mathcal I_{j} = \sum_{|j'-j|\leq 4} [\Delta_j, S_{j'-1} v] \nabla \Delta_{j'} f,
		\end{equation*}
		 which yields, by applying the standard commutator estimate \cite[Lemma 2.97]{bcd11}, that 
		 \begin{equation*}
		 	\norm {\mathcal I_{j}}\lesssim \norm {\nabla v}_{L^m} \sum_{|j-j'|\leq 4}  2^{j-j'}  \norm {\Delta_{j'} f}_{L^q}.
		 \end{equation*}
		Therefore, we deduce that 
		\begin{equation*}
		\sup_{j \in \mathbb Z}\left( 	2^{-j} \norm {\mathcal I_{j}}\right) \lesssim \norm {\nabla v}_{L^m} \sup_{j \in \mathbb Z}\left( 2^{-j}\norm {\Delta_j f}_{L^q}  \right).
		\end{equation*}
		
		The second term can be estimated by employing the classical product laws  for the remainder of Bony's decomposition (see \cite[Theorem 2.85]{bcd11}). Accordingly, we obtain that  
		\begin{equation*}
			 \begin{aligned}
			 	\sup_{j \in \mathbb Z}\left( 2^{-j}\norm {\mathcal J_j}_{L^{p}} \right)
			 	& = \norm {\div R(v,f)}_{B^{-1}_{p,\infty}}
			 	\\
			 	&\lesssim  \norm {v}_{B^{1}_{m,\infty}} \norm {f}_{B^{-1}_{q,1}}.
			 \end{aligned}
		\end{equation*}
		
		As for the last term, let us split it into a sum of the terms $\mathcal K_j^1$, $\mathcal K_j^2$ and $\mathcal K_j^3$ which correspond to each summand in its definition, respectively. 
		 Then, for the first term in this new decomposition, we first notice that the identity 
		\begin{equation*}
			\mathcal K_{j}^1 = \sum_{|j'-j|\leq 4}   S_{j'-1} \nabla f  \Delta_{j'} v
		\end{equation*}
		yields the bound 
		\begin{equation*}
			\begin{aligned}
				\norm {\mathcal K_{j}^1  }_{L^p} 
				&\lesssim \norm {\nabla v}_{B^{0}_{m,\infty}} \sum_{|j-j'|\leq 4} 2^{j'-j} \norm {S_{j'-1} f}_{L^q}
				\\
				&\lesssim \norm {\nabla v}_{B^{0}_{m,\infty}} \sum_{|j-j'|\leq 4}  \sum_{k\leq j'-2}  \norm {\Delta_k f}_{L^q}
				\\
				&\lesssim \norm {\nabla v}_{B^{0}_{m,\infty}} \sum_{|j-j'|\leq 4}  \sum_{k\leq j'-2}  2^k \sup_{k\in \mathbb Z} \left( 2^{-k}  \norm {\Delta_k f}_{L^q} \right).
			\end{aligned}
		\end{equation*}
		Thus, we infer that 
		\begin{equation*}
			2^{-j} \norm {\mathcal K_{j}^1}\lesssim \norm {\nabla v}_{B^{0}_{m,\infty}} \sup_{j \in \mathbb Z}\left( 2^{-j}\norm {\Delta_j f}_{L^q}  \right).
		\end{equation*}
		
		As for the estimate of the second term $\mathcal K_{j}^2$, it follows by virtue of similar arguments  (see \cite[Theorem 2.82]{bcd11}, for instance) that lead to the control 
		\begin{equation*}
			\begin{aligned}
				\sup_{j \in \mathbb Z}\left( 2^{-j} \norm {\mathcal K_{j}^2}_{L^p}   \right) & =\norm {T_{\nabla f}v}_{B^{-1}_{p,\infty}}  
				\\
				&\lesssim \norm {\nabla f}_{B^{-2}_{q,\infty}}\norm {  v}_{B^{1}_{m,\infty}} 
				\\
				&\lesssim \norm {  f}_{B^{-1}_{q,\infty}}\norm {\nabla v}_{ B^0_{m,\infty}} .
			\end{aligned}
		\end{equation*}
		
		Finally, for the last term $\mathcal K_{j}^3$, we proceed   by first writing  that   		\begin{equation*}
			\begin{aligned}
				\sup_{j \in \mathbb Z}\left(   2^{-j} \norm {\mathcal K_{j}^3}_{L^p} \right) & =\sup_{j \in \mathbb Z}\left( 2^{-j} \norm {\div   R( \Delta_j f  ,  v)}_{L^p} \right)  
				\\
				&\lesssim  \sup_{j \in \mathbb Z}\left( 2^{-j} \norm { R( \Delta_j f  ,  v)}_{ \dot B^{1}_{p,1}} \right)  
				\\
				&\lesssim  \sup_{j \in \mathbb Z}\left( 2^{-j} \norm { \Delta_j f  }_{ \dot B^{0}_{q,1}} \right) \norm {\nabla v}_{ B^0_{m,\infty}}  .
			\end{aligned}
		\end{equation*}
		Therefore, employing the simple observation that  
		\begin{equation*}
			2^{-j}\norm {\Delta _j f}_{B^{0}_{p,1}} =   2^{-j }\sum_{|j-j'| \leq 1} \norm {\Delta_{j} \Delta_{j'} f}_{L^p}\leq   2^{-j }\sum_{|j-j'| \leq 1} \norm {\Delta_{j} f}_{L^p} \lesssim \sup_{j \in \mathbb Z}\left( 2^{-j} \norm { \Delta_j f  }_{ L^q}  \right)
		\end{equation*}
		yields the desired bound 
		\begin{equation*}
			\sup_{j \in \mathbb Z}\left(   2^{-j} \norm {\mathcal K_{j}^3}_{L^p} \right)  \lesssim  \sup_{j \in \mathbb Z}\left( 2^{-j} \norm { \Delta_j f  }_{ L^q}  \right)\norm {\nabla v}_{B^0_{m,\infty}}.  
		\end{equation*}
		This concludes the proof of the lemma.
	\end{proof}

	The next lemma that we prove below establishes a quantitative bound on the behavior of a quantity   that naturally appears when writing the difference between the quasi-geostrophic shallow-water and Euler's velocity fields. This is crucial in the proofs of our main results.
	
	\begin{lem}\label{lemma:error}
		For any $\omega \in L^1(\mathbb R^2)$, it holds that 
		\begin{equation*}
			\norm { \lambda \nabla^\perp(-\Delta)^{-1} (\lambda - \Delta)^{-1}\omega }_{\dot H^1\cap L^\infty (\mathbb R^2)} \lesssim \lambda^\frac{1}{2}   \norm {\omega}_{L^1(\mathbb R^2)}  ,
		\end{equation*}
		for any $\lambda >0$.
	\end{lem}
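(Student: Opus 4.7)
The plan is to pass to the Fourier side and reduce both norms to explicit, scale-invariant integrals. Writing $f \bydef \lambda \nabla^\perp(-\Delta)^{-1}(\lambda-\Delta)^{-1}\omega$, the symbol of the underlying multiplier is
\begin{equation*}
	\widehat{f}(\xi) \;=\; \frac{i\lambda\,\xi^\perp}{|\xi|^2\bigl(\lambda+|\xi|^2\bigr)}\,\widehat{\omega}(\xi),
\end{equation*}
and since $\omega \in L^1(\mathbb R^2)$ the only available pointwise bound on the data is $\|\widehat{\omega}\|_{L^\infty}\lesssim \|\omega\|_{L^1}$. The proof therefore splits the two parts of the norm and in each case factors out $\|\widehat{\omega}\|_{L^\infty}$, leaving a purely $\lambda$-dependent integral.

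For the $\dot H^1$ part I would apply Plancherel, noticing the cancellation $|\xi|^2\cdot |\xi^\perp|^2/|\xi|^4 = 1$, which leaves
\begin{equation*}
	\|f\|_{\dot H^1}^2 \;\lesssim\; \|\widehat\omega\|_{L^\infty}^2 \int_{\mathbb R^2} \frac{\lambda^2}{\bigl(\lambda+|\xi|^2\bigr)^2}\,d\xi.
\end{equation*}
The rescaling $\xi=\sqrt\lambda\,\eta$ reduces this last integral to $\lambda\cdot\int_{\mathbb R^2}(1+|\eta|^2)^{-2}\,d\eta$, which is finite in dimension two and produces the desired $\lambda^{1/2}$ factor. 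For the $L^\infty$ part I would rely on the elementary Hausdorff--Young-type bound $\|f\|_{L^\infty}\lesssim \|\widehat f\|_{L^1}$ and estimate
\begin{equation*}
	\|\widehat f\|_{L^1} \;\lesssim\; \|\widehat\omega\|_{L^\infty} \int_{\mathbb R^2} \frac{\lambda}{|\xi|\bigl(\lambda+|\xi|^2\bigr)}\,d\xi,
\end{equation*}
where the same rescaling converts the remaining integral into $\sqrt\lambda\cdot \int_{\mathbb R^2} |\eta|^{-1}(1+|\eta|^2)^{-1}\,d\eta$, again finite in two dimensions (integrable at the origin since the singularity $|\eta|^{-1}$ is weaker than the ambient dimension).

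The only subtle point, which one might regard as the main obstacle, is that the symbol $\lambda/(|\xi|^2(\lambda+|\xi|^2))$ is singular at the origin, and the data only lie in $L^1$, so I cannot use Plancherel directly on $\omega$. This is precisely why the $\dot H^1$ estimate must absorb one factor of $|\xi|$ from the gradient against the $|\xi|^{-1}$ coming from the Biot--Savart symbol, and why the $L^\infty$ bound must be run through $\|\widehat f\|_{L^1}$ rather than through $\|\widehat f\|_{L^2}$. In two dimensions both integrals happen to be simultaneously integrable at $0$ and at $\infty$, and the scaling $\xi = \sqrt\lambda\eta$ makes the exponent $\lambda^{1/2}$ transparent in both estimates; an alternative, equivalent route would be to exploit the resolvent identity $\lambda(-\Delta)^{-1}(\lambda-\Delta)^{-1} = (-\Delta)^{-1}-(\lambda-\Delta)^{-1}$ and work with Bessel-type kernels in physical space, but the Fourier computation above is more direct.
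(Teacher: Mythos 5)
Your proof is correct and is essentially the paper's argument: both identify the Fourier multiplier $S_\lambda(\xi)=\lambda\xi/(|\xi|^2(\lambda+|\xi|^2))$, reduce the $L^\infty$ bound to $\|S_\lambda\|_{L^1(\mathbb R^2)}\|\omega\|_{L^1}$ and the $\dot H^1$ bound to $\bigl(\int|\xi|^2|S_\lambda(\xi)|^2\,d\xi\bigr)^{1/2}\|\omega\|_{L^1}$, and compute the same two integrals to get $\lambda^{1/2}$. The only cosmetic difference is that the paper first introduces the physical-space kernel $K_\lambda$ and applies Young's convolution inequality, whereas you stay on the Fourier side throughout and invoke $\|\widehat\omega\|_{L^\infty}\le\|\omega\|_{L^1}$ directly; these are the same computation in different clothing.
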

	\begin{proof}
		We introduce the function 
		\begin{equation*}
			\xi \mapsto S _\lambda (\xi) \bydef \frac{\lambda\xi}{|\xi|^2 (\lambda +|\xi|^2)} ,
		\end{equation*}
		which is clearly in $L^1(\mathbb R^2)$, for any fixed $\lambda>0$. Thus, its Fourier transform is well defined as a function in $C_{b}(\mathbb R^2).$  Accordingly,  we denote by $K_\lambda$ the convolution kernel associated with the Fourier symbol $S_\lambda(\xi)$, i.e.,  
	\begin{equation*}
		(\mathcal FK_ \lambda) (\xi) \bydef   S_\lambda(\xi), \quad \text{for all } \xi \in \mathbb R^2\setminus \{0\} .
	\end{equation*}
	Moreover, it holds that 
	\begin{equation*}
		\norm {K_\lambda}_{L^\infty(\mathbb R^2)} \lesssim \norm {S_\lambda}_{L^1(\mathbb R^2)} = \pi^2 \lambda^\frac{1}{2}.
	\end{equation*}
	Hence, the desired bound in $L^\infty$ follows directly from the estimate
	\begin{equation*}
		\begin{aligned}
			\norm { \lambda \nabla^\perp(-\Delta)^{-1} (\lambda - \Delta)^{-1}\omega }_{  L^\infty (\mathbb R^2)} 
			&= \norm { K_\lambda *\omega }_{  L^\infty (\mathbb R^2)}
			\\
			& \lesssim \lambda^\frac{1}{2}   \norm {\omega}_{L^1(\mathbb R^2)} .
		\end{aligned}
	\end{equation*}
	As for the $\dot H^1$ norm claimed in the lemma, it follows by instead applying the alternative bound on the kernel 
	\begin{equation*}
		\norm {\nabla K_\lambda}_{L^2(\mathbb R^2)} \lesssim \lambda \left(  \int_{\mathbb R^2} \frac{d\xi}{\left( \lambda + |\xi|^2\right)^2}  \right)^\frac{1}{2}
		 = \pi^{\frac{1}{2}}  \lambda^\frac{1}{2},
	\end{equation*} 
	which implies that 
	\begin{equation*}
		\begin{aligned}
			\norm { \lambda \nabla^\perp(-\Delta)^{-1} (\lambda - \Delta)^{-1}\omega }_{ \dot H^1(\mathbb R^2)} 
			&= \norm { \nabla  K_\lambda *\omega }_{  L^2 (\mathbb R^2)}
			\\
			& \lesssim     \norm {\nabla K_\lambda}_{L^2(\mathbb R^2)} \norm {\omega}_{L^1(\mathbb R^2)} 
			\\
			& \lesssim \lambda^\frac{1}{2}   \norm {\omega}_{L^1(\mathbb R^2)} .
		\end{aligned}
	\end{equation*}
	This completes the proof of the lemma.
	\end{proof}

	We conclude the  discussion of this section by stating a fundamental result in the context of interpolation inequalities in Besov spaces.  
	\begin{lem}[Interpolation] \label{lemma:interIneq} Let the dimension $d\geq 1$. For any  $s\in \mathbb R$, $\varepsilon>0$, and $p\in [1,\infty]$, it holds that 		\begin{equation*}
			\norm {    (\id-  S_0)  f   }_{B^{s}_{p,1} } \lesssim  \norm {    (\id-  S_0)  f   }_{B^{s}_{p,\infty} } \log \left( e+ \frac{ \norm {   (\id-  S_0) f  }_{B^{s+\varepsilon}_{p,\infty}} }{ \norm {    (\id-  S_0)  f   }_{B^{s}_{p,\infty} }}\right),
		\end{equation*} 
		for any  $f\in B^{s+\varepsilon}_{p,\infty}(\mathbb R^d)$.
	\end{lem}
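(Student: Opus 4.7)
The natural strategy is the standard low/high frequency splitting of the $\ell^1$ sum defining the $B^s_{p,1}$ norm, with the splitting threshold optimized in terms of the ratio of the two Besov norms on the right-hand side. Set
\begin{equation*}
A \bydef \|(\id - S_0)f\|_{B^s_{p,\infty}}, \qquad B \bydef \|(\id - S_0)f\|_{B^{s+\varepsilon}_{p,\infty}},
\end{equation*}
and note that, since $(\id - S_0)f$ has vanishing low-frequency block, the Besov norms on the left and right involve only the dyadic blocks $\Delta_k f$ with $k \geq 0$.

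Given an integer $N \geq 0$ to be chosen, I would write
\begin{equation*}
\|(\id - S_0)f\|_{B^s_{p,1}} = \sum_{0 \leq k \leq N} 2^{ks}\|\Delta_k f\|_{L^p} + \sum_{k > N} 2^{-k\varepsilon}\, 2^{k(s+\varepsilon)}\|\Delta_k f\|_{L^p}.
\end{equation*}
The first sum is bounded by $(N+1)A$ and the second one, via the geometric series, by a constant (depending on $\varepsilon$) times $2^{-N\varepsilon} B$. Thus
\begin{equation*}
\|(\id - S_0)f\|_{B^s_{p,1}} \lesssim_\varepsilon (N+1)\, A + 2^{-N\varepsilon}\, B.
\end{equation*}

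To conclude, I would optimize $N$ according to the size of $B/A$. If $B \leq A$, the choice $N = 0$ already gives the bound by $A$, which is compatible with the right-hand side of the claimed inequality since the logarithm is bounded from below by a positive constant. If $B > A$, I would pick $N$ to be the unique nonnegative integer with $2^{N\varepsilon} \leq B/A < 2^{(N+1)\varepsilon}$, so that $N \sim \varepsilon^{-1}\log_2(B/A)$ and $2^{-N\varepsilon}B \lesssim A$. Both terms then combine into $A\bigl(1 + \log(B/A)\bigr)$, which is controlled by $A\log(e + B/A)$. Merging the two cases produces the estimate stated in the lemma.

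The argument is entirely routine; there is no conceptual obstacle. The only minor care needed is to handle uniformly the degenerate regime $B \leq A$ (where the logarithm would be negative or small), which is dispatched by the $e+\cdot$ inside the logarithm, and to make sure the implicit constant in $\lesssim$ is allowed to depend on $\varepsilon$ through the factor $1/(1 - 2^{-\varepsilon})$ coming from the geometric series.
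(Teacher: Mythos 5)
Your proposal is correct and follows essentially the same route as the paper: split the $\ell^1$ sum at a threshold $N$, bound the low part by $N$ times the $B^s_{p,\infty}$ norm and the high tail by a geometric series giving $2^{-N\varepsilon}$ times the $B^{s+\varepsilon}_{p,\infty}$ norm, then optimize over $N$. The only cosmetic difference is that you restrict $N$ to be a nonnegative integer and dispatch the degenerate regime $B\leq A$ explicitly, while the paper plugs in the real-valued optimal $N$ directly; both lead to the same bound.
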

	\begin{proof} 
		We write, for   $N\geq 1$ to be chosen at the end, that 
		\begin{align*}
			\norm {    (\id-  S_0)  f   }_{B^{s}_{p,1} } &= \norm {  S_0(\id-  S_0)  f   }_{L^p} + \sum_{0\leq j < N} 2^{js}\norm {    \Delta_j (\id-  S_0)  f   }_{L^p}+ \sum_{j \geq N} 2^{js}\norm {    \Delta_j (\id-  S_0)  f   }_{L^p}
			\\ &\leq \norm {  S_0(\id-  S_0)  f   }_{L^p} + N\sup_{j\geq 0} \left( 2^{js}\norm {    \Delta_j (\id-  S_0)  f   }_{L^p} \right) 
			\\
			& \quad + \sum_{j \geq N} 2^{-j\varepsilon} 2^{j(s+\varepsilon)}\norm {    \Delta_j (\id-  S_0)  f   }_{L^p} 
			\\ &\leq N 	\norm {    (\id-  S_0)  f   }_{B^{s}_{p,\infty} } + 2^{-N \varepsilon} \norm{ (\id - S_0) f }_ {\dot{B}^{s+\varepsilon}_{p,\infty}}.
		\end{align*}
		Optimizing on $N$ suggests the choice
		$$N=\frac{1}{\varepsilon}\log_2\left(\varepsilon \log(2) \frac{\norm{ (\id - S_0) f }_ {\dot{B}^{s+\varepsilon}_{p,\infty}}}{	\norm { (\id-  S_0)  f   }_{B^{s}_{p,\infty} }}\right)$$
		whence
		\begin{align*}
			\norm {    (\id-  S_0)  f   }_{B^{s}_{p,1} } &\lesssim \norm {    (\id-  S_0)  f   }_{B^{s}_{p,1} } \left(1+ \log\left( \frac{\norm{ (\id - S_0) f }_ {\dot{B}^{s+\varepsilon}_{p,\infty}}}{	\norm { (\id-  S_0)  f   }_{B^{s}_{p,\infty} }}\right)\right)\\& \lesssim  \norm {    (\id-  S_0)  f   }_{B^{s}_{p,\infty} } \log \left( e+ \frac{ \norm {   (\id- S_0) f  }_{\dot{B}^{s+\varepsilon}_{p,\infty}} }{ \norm {    (\id-  S_0)  f   }_{B^{s}_{p,\infty} }}\right).		
		\end{align*}
		Since the operator \(\id - S_0\) removes the low frequencies, the preceding bound holds for both the homogeneous and inhomogeneous Besov norm, thereby concluding the proof. 
	\end{proof}

	\section{Three levels of strong Compactness}\label{section:three:levels}
	
	This section presents the core component of the paper where the detailed  proof of Theorem~\ref{thm:main} is given.  For the sake of clarity, we are going to proceed in three steps, each of which is the subject of one of the subsequent sections.	
	
	The overall strategy of our proof consists, first, in establishing a stability result of the difference $\omega_\lambda-\Omega$ in a space with a negative regularity. Thereafter, in a combination with an abstract extrapolation compactness argument, the proof of Theorem \ref{thm:main} will be reduced  to the study of the evanescence of some specific high frequency modes in a higher regularity space. 
	\subsection{Convergence in low-regularity spaces}\label{section:level:one}

	We begin with a first convergence result of the vorticities $\omega_\lambda-\Omega$, solving \eqref{QGSW} and \eqref{Euler} respectively, as $\lambda \to 0$,   in a space that is slightly better than $B^{-1}_{2,\infty}$. More precisely, we now introduce the space  
	\begin{equation*}
		\norm f_{X^{s}_{p,r}} \bydef \norm { S_0\nabla ^\perp \Delta^{-1} f}_{L^p} + \left( \sum_{j\in \mathbb N} 2^{rjs} \norm {\Delta_j f}_{L^p}^r \right) ^\frac{1}{r},
	\end{equation*}
	for any $s\in \mathbb R$ and $p,r\in [1,\infty]$, with the usual adjustment for $r=\infty$.
	From the above setting, and due to the identity 
	\begin{equation*}
		f= -\curl \nabla^\perp \Delta^{-1} f,
	\end{equation*} it is readily seen that 
	\begin{equation}\label{Besov_X_bound}
		\norm {f}_{B^{s}_{p,r}} \lesssim \norm f_{X^{s}_{p,r}},
	\end{equation}
	for any values of the  parameters $s\in \mathbb R$ and $p,r\in [1,\infty]$.
	In practical situations, it is  useful to keep in mind the statement that
	\begin{equation*}
		f\in X^s_{p,r} \qquad   \text{if and only if} \qquad  S_0 \nabla ^\perp \Delta^{-1} f \in L^p \quad \text{and} \quad  (\id - S_0) f \in  B^s_{p,r},
	\end{equation*} 
with the equivalent norm 
\begin{equation*}
	\norm f_{X^s_{p,r}} \sim \norm {S_0 \nabla ^\perp \Delta^{-1} f }_{L^p} + \norm {(\id - S_0) f}_{B^s_{p,r}}.
\end{equation*}
	
	  The following theorem summarizes the central convergence result  of this section.
	\begin{thm}\label{thm:conv:low:reg}
		Given the setup above, and assume  that the initial data enjoys the bounds 
		\begin{equation*}
			 \omega_{0,\lambda}, \Omega \in L^1\cap  L^\infty (\mathbb R^2),
		\end{equation*} 
		uniformly in $\lambda \in (0,1)$. Assuming further that
		\begin{equation*}
			\lim_{\lambda\to 0}\norm {\omega_{0,\lambda} -\Omega_0}_{X^{-1}_{2,\infty}}=0,
		\end{equation*} 
		then, it holds that 
		\begin{equation*}
			\lim_{\lambda\to 0} \norm {\omega_\lambda-\Omega}_{L^\infty ([0,t]; X^{-1}_{2,\infty})} =0.
		\end{equation*} 
		More precisely, there is a constant $C_{**}>0$ depending only on the size of the initial data such that, for any given time $T>0$, it holds that 
		\begin{equation*}
			\norm {\omega_\lambda-\Omega }_{L^\infty([0,T];X^{-1}_{2,\infty})}  \leq  \left( \norm {\omega_{0,\lambda}-\Omega_0}_{X^{-1}_{2,\infty}}^2 + C_{**} \lambda T \right)  ^{\frac{1}{2} \exp (-C_{**}T)},
		\end{equation*}
		as soon as $\lambda $ is small enough in such a way that 
		\begin{equation*}
			\left( \norm {\omega_{0,\lambda}-\Omega_0}_{X^{-1}_{2,\infty}}^2 + C_{**} \lambda   T \right)  ^{\frac{1}{2} \exp (-C_{**}T)} \leq 1.
		\end{equation*}
	\end{thm}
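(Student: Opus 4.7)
The strategy is to reformulate QGSW as a perturbed Euler equation at the level of the Biot--Savart lifted velocity $V_\lambda := \nabla^\perp(-\Delta)^{-1}\omega_\lambda$, which differs from the true QGSW velocity $u$ by the small error $E_\lambda := V_\lambda - u = \lambda\nabla^\perp(-\Delta)^{-1}(\lambda-\Delta)^{-1}\omega_\lambda$ controlled in $L^\infty\cap\dot H^1$ by Lemma~\ref{lemma:error}. The key algebraic identity is that for any divergence-free $W$ in $\mathbb R^2$ with $\curl W=\zeta$, one has $\mathbb{P}(W\cdot\nabla W)=-\mathcal{L}(W\zeta)$, where $\mathcal{L}F:=-\nabla^\perp(-\Delta)^{-1}\div F$ and $\mathbb P$ is the Leray projector (this follows from $W\cdot\nabla W=\nabla(|W|^2/2)+W^\perp\zeta$ in 2D). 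Applied to $V_\lambda$ and combined with $\partial_t V_\lambda=\mathcal{L}(u\omega_\lambda)$ and $u=V_\lambda-E_\lambda$, this produces the approximate Euler system
\[
\partial_t V_\lambda + \mathbb{P}(V_\lambda\cdot\nabla V_\lambda) = -\mathcal{L}(E_\lambda\omega_\lambda),
\]
whose source is of size $O(\lambda^{1/2})$ in $L^2$ by Lemma~\ref{lemma:error}. Subtracting Euler's equation for $V$ and setting $\Phi:=V_\lambda-V$ yields
\[
\partial_t\Phi + \mathbb{P}(V\cdot\nabla\Phi) + \mathbb{P}(\Phi\cdot\nabla V_\lambda) = -\mathcal{L}(E_\lambda\omega_\lambda).
\]
The observation $\|\delta\omega\|_{X^{-1}_{2,\infty}}\sim \|\Phi\|_{B^0_{2,\infty}}$ (the $S_0$ block being exactly $\|S_0\Phi\|_{L^2}$, and $\|\Delta_j\Phi\|_{L^2}\sim 2^{-j}\|\Delta_j\delta\omega\|_{L^2}$ for $j\geq 0$) reduces the proof to a Besov velocity estimate.

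Next I would carry out a Littlewood--Paley $L^2$ energy estimate, block by block for $j\geq -1$, testing against the divergence-free $\Delta_j\Phi$ (so the pressure contribution from $\mathbb P$ vanishes). The transport diagonal $\int\Delta_j\Phi\cdot V\cdot\nabla\Delta_j\Phi$ vanishes by $\div V=0$, leaving a commutator $[\Delta_j, V\cdot\nabla]\Phi$, a stretching contribution $\Delta_j(\Phi\cdot\nabla V_\lambda)$, and the $\lambda^{1/2}$-source. The commutator is controlled by Lemma~\ref{lemma:commutator} in terms of $\|\nabla V\|_{B^0_{\infty,\infty}}\lesssim \|\Omega\|_{L^\infty}$ and $\|\Phi\|_{B^0_{2,1}}$; the $B^0_{2,1}$ norm is in turn traded for $\|\Phi\|_{B^0_{2,\infty}}$ via the logarithmic interpolation Lemma~\ref{lemma:interIneq}, paying a $\log(e+\|\Phi\|_{B^1_{2,\infty}}/\|\Phi\|_{B^0_{2,\infty}})$ factor, where the higher-regularity norm $\|\Phi\|_{B^1_{2,\infty}}\lesssim \|\delta\omega\|_{L^2}$ is uniformly bounded by hypothesis.

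The hard part is the stretching term $\int\Delta_j\Phi\cdot\Delta_j(\Phi\cdot\nabla V_\lambda)$: since the datum is merely in $L^1\cap L^\infty$, the gradient $\nabla V_\lambda$ is \emph{not} in $L^\infty$ and one cannot use the naive paraproduct estimate with $\|\nabla V_\lambda\|_{L^\infty}$. The remedy is the classical Yudovich bound $\|\nabla V_\lambda\|_{L^p}\lesssim p\,\|\omega_\lambda\|_{L^p\cap L^\infty}$ combined with a Gagliardo--Nirenberg interpolation of $\|\Phi\|_{L^{2q}}$ between $\|\Phi\|_{L^2}$ and $\|\Phi\|_{L^\infty}$ (the former being controlled by $\|\Phi\|_{B^0_{2,\infty}}$ up to a log through Lemma~\ref{lemma:interIneq}, the latter bounded), followed by an optimization in $p\sim\log(1/\|\Phi\|_{B^0_{2,\infty}})$, extracting the expected Osgood logarithmic modulus. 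Assembling all contributions gives the differential inequality
\[
\frac{d}{dt}\|\Phi\|_{B^0_{2,\infty}}^2 \leq C_{**}\left(\|\Phi\|_{B^0_{2,\infty}}^2\log\!\left(e+\tfrac{1}{\|\Phi\|_{B^0_{2,\infty}}}\right) + \lambda\right),
\]
after absorbing the $\lambda^{1/2}\|\Phi\|_{B^0_{2,\infty}}$ source term via $ab\leq (a^2+b^2)/2$. A Yudovich-type Osgood integration (change of variable $z:=\log(1/y)$ with $y:=\|\Phi\|_{B^0_{2,\infty}}^2$) then yields $y(T)\leq(y(0)+C_{**}\lambda T)^{\exp(-C_{**}T)}$, and taking a square root produces the stated bound on $\|\delta\omega\|_{L^\infty_T X^{-1}_{2,\infty}}$, with the $\exp(-C_{**}T)$ rate entirely coming from the logarithmic Yudovich modulus in the stretching term.
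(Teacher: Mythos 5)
Your velocity--level reformulation is a genuinely different and interesting starting point. The algebraic identity $\mathbb{P}(W\cdot\nabla W)=-\mathcal{L}(W\,\mathrm{curl}\,W)$ applied to the Biot--Savart lift $V_\lambda=\nabla^\perp(-\Delta)^{-1}\omega_\lambda$ is correct, and it does produce a bona fide perturbed Euler system for $V_\lambda$ with source of size $O(\lambda^{1/2})$. This is a worthwhile observation, because the authors explicitly attribute the need for the intermediate Besov space to the ``absence of a velocity formulation'' for QGSW; your construction shows that a lifted velocity formulation \emph{does} exist. The paper instead stays at the vorticity level, writing $\partial_t f_\lambda+v\cdot\nabla f_\lambda=(u_\lambda-v)\cdot\nabla\omega_\lambda$ with $u_\lambda-v=\nabla^\perp\Delta^{-1}f_\lambda-E_\lambda$, and splits the $X^{-1}_{2,\infty}$ norm into a low--frequency energy estimate on $\nabla^\perp\Delta^{-1}S_0 f_\lambda$ and a high--frequency transport estimate on $\Delta_j f_\lambda$.

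There is, however, a real gap in your treatment of the stretching term that I do not see how to repair within the outlined plan. You estimate $\int\Delta_j\Phi\cdot\Delta_j\mathbb{P}(\Phi\cdot\nabla V_\lambda)$ by $\|\Delta_j\Phi\|_{L^2}\cdot p\|\Phi\|_{L^2}^{1-2/p}$ (Yudovich plus Gagliardo--Nirenberg with the bounded $\|\Phi\|_{L^\infty}$), and then convert $\|\Phi\|_{L^2}$ into $\|\Phi\|_{B^0_{2,\infty}}$ via Lemma~\ref{lemma:interIneq}, paying a logarithm $L:=\log(e+C_*/\|\Phi\|_{B^0_{2,\infty}})$. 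After taking $\sup_j$ this yields a modulus of the form
\[
p\,L^{1-2/p}\,\|\Phi\|_{B^0_{2,\infty}}^{\,2-2/p},
\]
and optimizing $p\sim L$ gives $L^2\|\Phi\|_{B^0_{2,\infty}}^2$, not $L\|\Phi\|_{B^0_{2,\infty}}^2$: one logarithm comes from Yudovich's $\|\nabla V_\lambda\|_{L^p}\lesssim p$, a \emph{second} from the $L^2\to B^0_{2,\infty}$ conversion. Since $\int_0\frac{dy}{y\log^2(1/y)}<\infty$, the Osgood condition fails for a $\log^2$ modulus, so the claimed conclusion $y(T)\leq(y(0)+C_{**}\lambda T)^{\exp(-C_{**}T)}$ does not follow. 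The paper avoids this precisely by never pairing Yudovich's bound with the $L^2\to B^{-1}_{2,\infty}$ conversion in the same term: the stretching--type contribution $K_j^1=\int\Delta_j\,\mathrm{div}(\nabla^\perp\Delta^{-1}f_\lambda\,\omega_\lambda)\,\Delta_j f_\lambda\,dx$ carries the \emph{bounded} factor $\|\omega_\lambda\|_{L^\infty}$ (so no Yudovich, only the Lemma~\ref{lemma:interIneq} log), while the Yudovich logarithm enters only through the transport commutator $K_j^3$, where it is coupled to the frequency--localized Bernstein interpolation $\|\Delta_j f\|_{L^\infty}\lesssim 2^j\|\Delta_j f\|_{L^2}$ together with the uniform bound $\|\Delta_j f\|_{L^2}\leq\|f\|_{L^2}\leq C_*$, avoiding any second conversion.

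Concretely, the obstruction is that your stretching term $\Phi\cdot\nabla V_\lambda$ has the unbounded factor $\nabla V_\lambda$, whereas the paper's analogous term $\nabla^\perp\Delta^{-1}f_\lambda\cdot\nabla\omega_\lambda=\mathrm{div}(\Phi\,\omega_\lambda)$ has the bounded factor $\omega_\lambda$. The natural way to expose the bounded factor at the velocity level is via $\mathbb{P}(V\cdot\nabla\Phi+\Phi\cdot\nabla V_\lambda)=\mathbb{P}(V^\perp f_\lambda+\Phi^\perp\omega_\lambda)$, but that identity swallows the transport part and destroys the $[\Delta_j,V\cdot\nabla]$ cancellation you need for the commutator. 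This tension is, I believe, exactly why the authors chose the vorticity route and the split $X^{-1}_{2,\infty}$ norm. If you wish to pursue the velocity lift, the estimate would go through in the stronger space $L^\infty_t L^2_x$ for $\Phi$ (where both factors of $\Phi$ sit in $L^2$ and only one Yudovich log appears), but that gives a different theorem: it requires and proves convergence in $\|\Phi_0\|_{L^2}$, not the weaker $X^{-1}_{2,\infty}$ hypothesis of this statement.

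Finally, a minor remark on the commutator: Lemma~\ref{lemma:commutator} is tailored to $f$ of regularity $B^{-1}_{p,\infty}$ (it controls $\sup_j 2^{-j}\|\mathcal J_j\|$), while your $\Phi$ lives in $B^0_{2,\infty}$. You would need a regularity--shifted variant of the commutator estimate, which is plausible but not what the lemma states, so this should be spelled out rather than cited directly.
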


	\begin{proof}
	We introduce the difference 
	\begin{equation*}
		f_\lambda\bydef \omega_\lambda-\Omega,
	\end{equation*}
	which is governed by the forced transport equation 
	\begin{equation}\label{f:equa}
		\partial_t f_\lambda  + v \cdot \nabla f_\lambda   =  \left( u_\lambda-v\right)  \cdot \nabla \omega_\lambda ,
	\end{equation}
	where the difference of the velocities can be recast as
	\begin{equation*}
		u_\lambda-v = \nabla^\perp (-\Delta ^{-1}) f_\lambda - \lambda \nabla^\perp  (-\Delta)^{-1} (\lambda - \Delta)^{-1}\omega_\lambda.
	\end{equation*}

		Let us now handle the low frequency part at first. To that end, applying the operator $\nabla^\perp \Delta^{-1}S_0$ to \eqref{f:equa}, and performing a standard energy estimate, yields that 
		\begin{equation*}
			\begin{aligned}
				\frac{1}{2} \frac{d}{dt} \norm {\nabla^\perp \Delta^{-1}S_0 f_\lambda}_{L^2}^2 
				&\leq   \norm {\nabla^\perp \Delta^{-1}\div  S_0  \left( vf_\lambda \right) }_{L^2}  \norm {\nabla^\perp \Delta^{-1}S_0 f_\lambda}_{L^2}
				\\
				& \quad +    \norm {\nabla^\perp \Delta^{-1}\div  S_0  \left( (u_\lambda-v)\omega_\lambda \right) }_{L^2}   \norm {\nabla^\perp \Delta^{-1}S_0 f_\lambda}_{L^2}
				\\
				&\lesssim  \norm {\nabla^\perp \Delta^{-1}\div  S_0  \left( vf _\lambda\right) }_{L^2}\norm {\nabla^\perp \Delta^{-1}S_0 f_\lambda}_{L^2}
				\\
				&\quad   + \norm {\nabla^\perp \Delta^{-1}\div  S_0  \left( \nabla^\perp \Delta^{-1} f _\lambda \omega_\lambda\right) }_{L^2}  \norm {\nabla^\perp \Delta^{-1}S_0 f_\lambda}_{L^2}
				\\
				&\quad + \norm {\nabla^\perp \Delta^{-1}\div  S_0  \left( \lambda \nabla^\perp(-\Delta)^{-1} (\lambda - \Delta)^{-1}\omega_\lambda \omega_\lambda\right) }_{L^2} \norm {\nabla^\perp \Delta^{-1}S_0 f_\lambda}_{L^2}.
			\end{aligned}
		\end{equation*}
		Thus, it follows that 
		\begin{equation*}
			\begin{aligned}
				\frac{1}{2} \frac{d}{dt} \norm {\nabla^\perp \Delta^{-1}S_0 f_\lambda}_{L^2}^2&\lesssim  \big(\norm {  S_0  \left( vf_\lambda \right) }_{L^2}  + \norm {  S_0  \left( \nabla^\perp \Delta^{-1} f  _\lambda\omega_\lambda\right) }_{L^2} 
				\\
				& \quad + \norm {  S_0  \left( \lambda \nabla^\perp (-\Delta)^{-1} (\lambda - \Delta)^{-1}\omega_\lambda \omega_\lambda\right) }_{L^2} \big) \norm {\nabla^\perp \Delta^{-1}S_0 f_\lambda}_{L^2}
				\\
				&\bydef  (I+II+III) \norm {\nabla^\perp \Delta^{-1}S_0 f_\lambda}_{L^2}.
			\end{aligned}
		\end{equation*}
		We now estimate each term separately. For $I$, we first split it as   
		\begin{equation*}
			\begin{aligned}
				I 
				&\leq \norm { S_0( v  S_0  f_\lambda ) }_{L^2} +\norm {  S_0  \left( v (\id - S_0)f _\lambda\right) }_{L^2}
				\\
				&\leq \norm {  v}_{L^\infty} \norm {  S_0  f_\lambda  }_{L^2} +\norm {  S_0  \left( v (\id - S_0)f_\lambda \right) }_{L^2}.
			\end{aligned}
		\end{equation*}
		We claim that  
		\begin{equation}\label{localized:product}
			\norm {  S_0  \left( v (\id - S_0)f_\lambda \right) }_{L^2} \lesssim   \norm {v}_{B^{1}_{\infty,\infty}} \norm {(\id - S_0) f_\lambda}_{B^{-1}_{2,\infty}},
		\end{equation}
		which, together with the bound
		\begin{equation*}
			\norm {S_0 f_\lambda}_{L^2}\lesssim \norm {S_0\nabla ^\perp \Delta ^{-1} f_\lambda}_{L^2},
		\end{equation*}
		yields that 
		\begin{equation*}
			I\lesssim  \norm {v}_{B^{1}_{\infty,\infty}} \norm {  f_\lambda}_{X^{-1}_{2,\infty}},
		\end{equation*}
		where we also used the embedding $B^1_{\infty,\infty} \hookrightarrow L^\infty.$
		
		Before we proceed any further, let us justify \eqref{localized:product}. To that end, it is important to make use of the frequency localization due to the operator $S_0$. This, combined with Bony's decomposition, yields that  
		\begin{equation*}
			\begin{aligned}
				\norm {  S_0  \left( v (\id - S_0)f_\lambda \right) }_{L^2} 
				&\lesssim \norm {  S_0  \left( T_ {(\id - S_0)f_\lambda}v \right) }_{L^2} + \norm {  S_0  \left( T_v (\id - S_0)f_\lambda \right) }_{B^{-1}_{2,\infty}}
				\\ & \quad +  \norm {  S_0  R  \big (v, (\id - S_0)f _\lambda \big) }_{B^{0}_{2,1}}
				\\
				&\lesssim \norm v _{B^1_{\infty,\infty}}\norm {(\id - S_0) f_\lambda}_{B^{-1}_{2,\infty}}.
			\end{aligned}
		\end{equation*}
		As for the term $II$, we proceed in a similar way, that is  by exploiting the localization effect of $S_0$ and splitting  the product according Bony's decomposition, to infer that 
		\begin{equation*}
			\begin{aligned}
				II & \leq  \norm {   \nabla^\perp \Delta^{-1}  S_0 f _\lambda \omega_\lambda  }_{L^2}  + \norm {  S_0\big(  \nabla^\perp \Delta^{-1} (\id -  S_0) f _\lambda \omega_\lambda \big) }_{L^2}  
				\\
				& \lesssim  \norm {\omega_\lambda}_{L^\infty}  \left( \norm {   \nabla^\perp \Delta^{-1}    S_0  f _\lambda  }_{L^2}  +  \norm {    (\id-  S_0)  f  _\lambda }_{B^{-1}_{2,\infty} } \right) 
				\\
				&\sim  \norm {\omega_\lambda}_{L^\infty} \norm {f_\lambda}_{X^{-1}_{2,\infty}}  .
			\end{aligned}
		\end{equation*}
		Finally, for the term $III$, we are going to pay a particular attention to its control in order to obtain a better decay rate in terms of the parameter $\lambda$. To that end, we first begin by writing  
		\begin{equation*}
			\begin{aligned}
				S_0  \left( \lambda (-\Delta)^{-1} (\lambda - \Delta)^{-1}\omega_\lambda \omega_\lambda\right)  
				& =\widetilde S_0 S_0  \left( \lambda (-\Delta)^{-1} (\lambda - \Delta)^{-1}\omega_\lambda \omega_\lambda\right)
				\\
				&= \widetilde S_0 \Big [ S_0, \lambda (-\Delta)^{-1} (\lambda - \Delta)^{-1}\omega_\lambda \Big  ]  \omega_\lambda
				\\
				& \quad  + \widetilde S_0 \left( \lambda (-\Delta)^{-1} (\lambda - \Delta)^{-1}\omega_\lambda S_0 \omega_\lambda \right),
			\end{aligned}		  
		\end{equation*}
		where $\widetilde S_0$ is the convolution with a smooth compactly supported Fourier multiplied taking the value one on the ball $B(0,2)$. The  the first term on the right-hand side can be controlled by virtue of  the classical commutator estimate (see \cite[Lemma 2.97]{bcd11}) to obtain that  
		\begin{equation*}
			\norm { \widetilde S_0 \Big [ S_0, \lambda \nabla^\perp (-\Delta)^{-1} (\lambda - \Delta)^{-1}\omega_\lambda \Big  ]  \omega_\lambda  }_{L^2}\lesssim \lambda \norm { \nabla^\perp(-\Delta)^{-1} (\lambda - \Delta)^{-1}\omega_\lambda}_{\dot H^1} \norm {\omega_\lambda}_{L^\infty}.
		\end{equation*}
		
		On the other hand,   we write,  by employing Bony's decomposition, again, that 
		\begin{equation*}
			\begin{aligned}
				\lambda\widetilde S_0 \left(  (-\Delta)^{-1} \nabla^\perp(\lambda - \Delta)^{-1}\omega_\lambda S_0 \omega_\lambda \right)
				&= \lambda \widetilde S_0 \left( T_{ \nabla^\perp(-\Delta)^{-1} (\lambda - \Delta)^{-1}\omega_\lambda}S_0 \omega_\lambda\right)
				\\
				&\quad +   \lambda \widetilde S_0 \left( T_{S_0 \omega_\lambda}{ \nabla^\perp(-\Delta)^{-1} (\lambda - \Delta)^{-1}\omega_\lambda} \right) 
				\\
				&\quad +\lambda \widetilde S_0   R\left( { \nabla^\perp(-\Delta)^{-1} (\lambda - \Delta)^{-1}\omega_\lambda},S_0 \omega_\lambda\right)
				\\
				& \bydef III_1+III_2 + III_3.
			\end{aligned}
		\end{equation*}
		First, we observe that  		\begin{equation*}
			III_1=0.
		\end{equation*}
		This, indeed, is a consequence of the  definition of low-high para-product component implies 
		\begin{equation*}
			\begin{aligned}
				T_a S_0 b 
				&= \sum_{j \in \mathbb Z} S_{j-1}a \Delta _jS_0b
				\\
				& =  \sum_{0\leq k\leq j-2} \sum_{-1\leq  j \leq 0} \Delta _{k-1}a \Delta _j \Delta_{-1} b,  
			\end{aligned} 
		\end{equation*}
		for any $a,b\in \mathcal S'(\mathbb R^2)$, whence the sum is identically zero, due to the restriction on the indices $(j,k)$ and the definition of the dyadic operators.

		Next, we estimate $III_2$ by a direct application of para-product laws (see \cite[Theorem 2.82]{bcd11}, for instance) to find that 
		\begin{equation*}
			\begin{aligned}
				\norm { III_2}_{L^2} 
				&\lesssim \lambda  \norm {\omega_\lambda}_{\dot B^{-1}_{\infty,2}} \norm { \nabla^\perp(-\Delta)^{-1} (\lambda - \Delta)^{-1}\omega_\lambda}_{\dot H^1}
				\\
				&\lesssim \lambda  \norm {\omega_\lambda}_{L^2} \norm {\nabla^\perp (-\Delta)^{-1} (\lambda - \Delta)^{-1}\omega_\lambda}_{\dot H^1}.
			\end{aligned}
		\end{equation*}
		Finally, the classical estimates of the remainder in Bony's decomposition (see \cite[Theorem 2.85]{bcd11}) leads to the control
		\begin{equation*}
			\begin{aligned}
				\norm { III_3}_{L^2}  \lesssim \lambda  \norm {\omega_\lambda}_{L^2 } \norm { \nabla^\perp(-\Delta)^{-1} (\lambda - \Delta)^{-1}\omega_\lambda}_{\dot H^1} .
			\end{aligned}
		\end{equation*}
		All in all, we have shown that 
		\begin{equation*}
			III \lesssim \lambda \norm {\omega_\lambda}_{L^1\cap L^2}  \norm {\nabla^\perp (-\Delta)^{-1} (\lambda - \Delta)^{-1}\omega_\lambda}_{\dot H^1},
		\end{equation*}
		which, by further employing Lemma \ref{lemma:error}, yields that  
		\begin{equation*}
			III\lesssim  \lambda^\frac{1}{2}   \norm {\omega_\lambda}_{L^1\cap L^2} ^2  .
		\end{equation*}
		
		In the end, by gathering the preceding bounds, we end up with  
		\begin{equation*}
			\frac{d}{dt} \norm {\nabla^\perp \Delta^{-1}S_0 f_\lambda}_{L^2} ^2 \lesssim _{C_*} \left(  \lambda^\frac{1}{2}  + \norm {f_\lambda}_{X^{-1}_{2,\infty}} \right) \norm {\nabla^\perp \Delta^{-1}S_0 f_\lambda}_{L^2},
		\end{equation*}
		where $C_*>0$ is a constant that  depends only on    the size of initial data,  which essentially comes from the following bounds on the solutions of \eqref{QGSW} and \eqref{Euler} 
		\begin{equation}\label{bound:vorticities}
			\norm {(\omega_\lambda,\Omega)}_{L^\infty (\mathbb R^+; L^1\cap L^\infty)}  \leq C_*.
		\end{equation}
		By an integration in time, we conclude that  
		\begin{equation}\label{f:low:frequencies}
			\begin{aligned}
				\norm {\nabla^\perp \Delta^{-1}S_0 f_\lambda(t)}_{L^2}^2 
				& \leq  	\norm {\nabla^\perp \Delta^{-1}S_0 f_{0,\lambda}}_{L^2}^2  + \lambda   t + C_*  \int_0^t \norm {     f _\lambda(\tau )  }_{X^{-1}_{2,\infty} } ^2 d\tau ,
			\end{aligned}
		\end{equation}
		which establishes the desired bound for the low frequency part.
		We turn our attention now to the control of the high frequencies. To that end, we first localize the frequencies of the transport equation \eqref{f:equa} and write that 
		\begin{equation*}
			\partial_t \Delta_j f_\lambda  + v \cdot \nabla \Delta_j f_\lambda =  \Delta_j \left( \nabla^\perp \Delta^{-1} f_\lambda - \lambda \nabla^\perp  \Delta^{-1}(\lambda-\Delta)^{-1} \omega_\lambda\right)  \cdot \nabla \omega_\lambda + [\Delta_j , v\cdot \nabla ] f_\lambda,
		\end{equation*} 
		for any $j\geq 0$. 
		It then follows that 
		\begin{equation}\label{localized:f}
			\begin{aligned}
				\frac{1}{2dt } \norm {\Delta_j f_\lambda }_{L^2}^2 
				&\leq \int _{\mathbb R^2 } \Delta_j \left( \nabla^\perp \Delta^{-1} f_\lambda \cdot \nabla \omega_\lambda \right)      \Delta_j f_\lambda dx   
				\\
				&\quad - \lambda\int _{\mathbb R^2 } \Delta_j \left( \nabla^\perp  \Delta^{-1}(\lambda-\Delta)^{-1} \omega_\lambda \cdot \nabla \omega_\lambda \right)   \Delta_j f_\lambda dx  
				\\
				& \quad +\int _{\mathbb R^2 }   [\Delta_j , v\cdot \nabla ] f_\lambda    \Delta_j f_\lambda dx 
				\\
				& \bydef K_j^1 +K_j^2 +K_j^3 ,
			\end{aligned}
		\end{equation}
		for any $j\geq 0$.
		Now, we take care of each summand separately. For the first one, we proceed employing H\"older inequalities to infer  that 
		\begin{equation*}
			\begin{aligned}
				K_j^1 
				& \lesssim \norm {\Delta_j \div (\nabla^\perp \Delta^{-1} f _\lambda \omega_\lambda  )}_{L^2}  \norm { \Delta_j f_\lambda}_{L^2}
				\\
				&\lesssim    2^j  \norm { \nabla^\perp \Delta^{-1}  f _\lambda \omega_\lambda  } _{L^2}  \norm { \Delta_j f_\lambda}_{L^2}
				\\
				&\lesssim    2^{2j}  \norm { \nabla^\perp \Delta^{-1}  f _\lambda } _{L^2}\norm {    \omega_\lambda  } _{L^ \infty}  \sup_{j\geq 0} \left( 2^{-j}  \norm { \Delta_j f_\lambda}_{L^2} \right)
				\\
				&\lesssim    2^{2j} \left(   \norm {S_0 \nabla^\perp \Delta^{-1}  f _\lambda } _{L^2} + \norm { (\id-S_0)   f _\lambda } _{\dot H^{-1}} \right) \norm {    \omega_\lambda  } _{L^ \infty}\norm {f_\lambda}_{X^{-1}_{2,\infty}}.
			\end{aligned}
		\end{equation*}
		Thus, by the interpolation inequality from Lemma \ref{lemma:interIneq}, we find that
		\begin{equation}\label{K1:bound}
			\begin{aligned}
				K_j^1 
				&\lesssim  2^{2j}   \norm {S_0 \nabla^\perp \Delta^{-1}  f _\lambda } _{L^2}   \norm {    \omega_\lambda  } _{L^ \infty}\norm {f_\lambda}_{X^{-1}_{2,\infty}}
				\\
				&\quad +  2^{2j}    \norm {    (\id-  S_0)  f  _\lambda }_{B^{-1}_{2,\infty} } \log \left( e+ \frac{ \norm {    (  \Omega, \omega_\lambda)   }_{L^2} }{ \norm {    (\id-  S_0)  f  _\lambda }_{B^{-1}_{2,\infty} }}\right)   \norm {    \omega_\lambda  } _{L^ \infty}\norm {f_\lambda}_{X^{-1}_{2,\infty}}.
			\end{aligned}
		\end{equation}
		As for the estimate of $K_j^2$,  employing similar arguments yields that 
		\begin{equation*}
			\begin{aligned}
				K_j^2
				& \lesssim \lambda  \norm {\Delta_j \left( \nabla^\perp  \Delta^{-1}(\lambda-\Delta)^{-1} \omega_\lambda \cdot \nabla \omega_\lambda \right)  }_{L^2}  \norm { \Delta_j f}_{L^2} 
				\\
				&   \lesssim 2^{2j}  \lambda  \norm { \left( \nabla^\perp  \Delta^{-1}(\lambda-\Delta)^{-1} \omega_\lambda   \omega_\lambda \right)  }_{L^2} \sup_{j\geq 0} \left( 2^{-j}  \norm { \Delta_j f}_{L^2} \right) 
				\\
				&   \lesssim 2^{2j}  \lambda  \norm { \left( \nabla^\perp  \Delta^{-1}(\lambda-\Delta)^{-1} \omega_\lambda   \right)  }_{L^\infty} \norm{ \omega_\lambda}_{L^2} \norm {f_\lambda}_{X^{-1}_{2,\infty}}.
			\end{aligned}
		\end{equation*}
		Therefore, by making use of Lemma \ref{lemma:error}, we find that 
		\begin{equation}\label{K2:bound}
			K_j^2 \lesssim  2^{2j} \lambda^\frac{1}{2}     \norm {\omega_\lambda}_{L^1\cap L^2}^2  \norm {f_\lambda}_{X^{-1}_{2,\infty}} .
		\end{equation}
		We finally turn our attention to the control of $K_j^3$, which hinges on    the commutator estimate from Lemma \ref{lemma:commutator}. More precisely, we first split that term as 
		\begin{equation*}
			\int _{\mathbb R^2 }   [\Delta_j , v\cdot \nabla ] f_\lambda    \Delta_j f_\lambda dx = \int _{\mathbb R^2 }  \left( \mathcal I_j + \mathcal J_j \right)  \Delta_j f_\lambda dx ,
		\end{equation*}
		where we follow the notations from  the decomposition laid out in Lemma  \ref{lemma:commutator}. Then, a basic application of H\"older inequality leads to 
		\begin{equation*}
			\begin{aligned}
				K_j^ 3 
				&\lesssim  2^{ 2j}  \sup_{j\geq 0} \left(  2^{-j} \norm {\mathcal I_j}_{L^{\frac{2q}{q+1}}} \right) \sup_{j\geq 0} \left( 2^{-j}  \norm { \Delta_j f_\lambda}_{L^ \frac{2q}{q-1}} \right)
				\\
				&\quad +   2^{ 2j}  \sup_{j\geq 0} \left(  2^{-j} \norm {\mathcal J_j}_{L^{2}} \right) \sup_{j\geq 0} \left( 2^{-j}  \norm { \Delta_j f_\lambda}_{L^2} \right),
			\end{aligned}
		\end{equation*} 
		where $q=q(t)\in (2,\infty)$ will be chosen later on.
		Therefore, we find, by employing the estimates from Lemma \ref{lemma:commutator}, that 
		\begin{equation*}
			K_j^3  
				 \lesssim  2^{ 2j} \norm {\nabla v}_{L^q}    \norm {  f_\lambda}_{B^{-1}_{ \frac{2q}{q-1},\infty}} ^2 + 2^{ 2j} \norm {\nabla v}_{ B^0_{\infty,\infty}}    \norm {  f_\lambda}_{B^{-1}_{2,1}} ^2.
		\end{equation*}
		By further employing Biot-Savart law 
		\begin{equation*}
			\norm {\nabla  v}_{L^q} \lesssim q \norm {\Omega}_{L^q} \lesssim q \norm {\Omega}_{L^2\cap L^\infty},
		\end{equation*}
		and the interpolation inequality 
		\begin{equation*}
			\begin{aligned}
				\norm { \Delta_j f_\lambda}_{L^ \frac{2q}{q-1}}^2 
				&\leq  \norm { \Delta_j f_\lambda}_{L^ 2}^{2-\frac{2}{q}} \norm { \Delta_j f_\lambda}_{L^ \infty} ^\frac{2}{q}
				\\
				&\lesssim \norm { \Delta_j f_\lambda}_{L^ 2}^{2-\frac{2}{q}} \left( 2^j \norm { \Delta_j f_\lambda}_{L^ 2} \right)^\frac{2}{q},
			\end{aligned}
		\end{equation*}
		we end up with the estimate 
		\begin{equation*} 
			K_j^ 3 
			\lesssim 2^{ 2j}  \norm {\Omega}_{L^2\cap L^\infty}\left(q \norm {  f}_{B^{-1}_{ 2,\infty}} ^{2-\frac{2}{q}}    \norm {  (\omega_\lambda,\Omega)}_{L^2}  ^{\frac{2}{q}} +  \norm {  f_\lambda}_{B^{-1}_{2,1}}\norm { (\id - S_0)  f_\lambda}_{B^{-1}_{2,\infty}} \right) .
		\end{equation*}
		Therefore, employing \eqref{Besov_X_bound} followed by Lemma \ref{lemma:interIneq}, we obtain that 
		\begin{equation}\label{K3:bound}
			\begin{aligned}
				K_j^ 3 
			&\lesssim 2^{ 2j}  \norm {\Omega}_{L^2\cap L^\infty} q \norm {  f}_{B^{-1}_{ 2,\infty}} ^{2-\frac{2}{q}}    \norm {  (\omega_\lambda,\Omega)}_{L^2}  ^{\frac{2}{q}} 
			\\
			& \quad + 2^{ 2j}  \norm {\Omega}_{L^2\cap L^\infty} \left( \norm { S_0 \nabla^\perp \Delta^{-1} f_\lambda}_{L^2}^2 + \norm {    (\id-  S_0)  f _\lambda  }_{B^{-1}_{2,\infty} } ^2\log \left( e+ \frac{ \norm {    (  \Omega, \omega_\lambda)   }_{ L^2}^2 }{ \norm {    (\id-  S_0)  f _\lambda  }_{B^{-1}_{2,\infty} }^2}\right) \right)  .
			\end{aligned}
		\end{equation}

		All in all, incorporating the estimates  \eqref{K1:bound}, \eqref{K2:bound} and \eqref{K3:bound}  in \eqref{localized:f} yields that  
		\begin{equation*}
			\begin{aligned}
			\sup_{j\geq 0}	\norm { \Delta_j  f _\lambda(t)}_{  L^2}  ^2 
				&\leq  \sup_{j\geq 0}	\norm { \Delta_j  f_{0,\lambda}}_{  L^2}  ^2    +\lambda  \norm {\omega_\lambda}_{L^1\cap L^2}^2 
				\\
				& \quad + (1+\norm { (  \Omega, \omega_\lambda) }_{L^\infty_t(L^2\cap L^\infty)})^2\int_0^t  q(\tau) \norm {     f _\lambda (\tau ) }_{X^{-1}_{2,\infty} }^{2-\frac{2}{q(\tau)}}  d\tau 
				\\
				& \quad + (1+\norm { (  \Omega, \omega_\lambda) }_{L^\infty_t(L^2\cap L^\infty)})^2\int_0^t  \norm {      f _\lambda(\tau )  }_{X^{-1}_{2,\infty} } ^2\log \left( e+ \frac{ \norm {    (  \Omega, \omega_\lambda)   }_{L^\infty_t L^2}^2 }{ \norm {     f _\lambda (\tau ) }_{X^{-1}_{2,\infty} }^2}\right)   d\tau .
			\end{aligned}
		\end{equation*}
		By further employing the bound \eqref{bound:vorticities} on the vorticities, we end up with
		\begin{equation*}
			\begin{aligned}
			\sup_{j\geq 0}	\norm { \Delta_j  f_\lambda (t)}_{  L^2}  ^2 
				&\leq  \sup_{j\geq 0}	\norm { \Delta_j  f_{0,\lambda}}_{  L^2}  ^2    + C_{**}\lambda + C_{**}\int_0^t  q(\tau) \norm {     f _\lambda (\tau ) }_{X^{-1}_{2,\infty} }^{2-\frac{2}{q(\tau)}}  d\tau 
				\\
				& \quad + C_{**}\int_0^t  \norm {      f _\lambda(\tau )  }_{X^{-1}_{2,\infty} } ^2\log \left( e+ \frac{ eC_{**} }{ \norm {     f _\lambda (\tau ) }_{X^{-1}_{2,\infty} }^2}\right)   d\tau ,
			\end{aligned}
		\end{equation*}
		where the constant $C_{**}>0$ depends only on the initial data, uniformly in $\lambda\in (0,1)$.
		
		Let now $T>0$ be fixed, and we assume that 
		\begin{equation*}
			\norm {      f _\lambda(t )  }_{X^{-1}_{2,\infty} } ^2 \leq C_{**}
		\end{equation*}
		for any $t\in [0,t_*]$, for some $t_*\in (0,T]$. We then make the choice of the integrability exponent 
		\begin{equation*}
			q(t) = 2 \log \left( \frac{e C_{**}}{ \norm {   f (t)}_{  X^{-1}_{2,\infty}}  ^2} \right) , \quad \text{for all } t\in [0,t_*),
		\end{equation*} 
		which leads to 
		\begin{equation*}
			\begin{aligned}
				\sup_{j\geq 0}	\norm { \Delta_j  f_\lambda (t)}_{  L^2}  ^2  
				&\leq  \sup_{j\geq 0}	\norm { \Delta_j  f _{0,\lambda}}_{  L^2}  ^2   
				+C_{**} \lambda + C_{**}\int_0^t  \norm {      f_\lambda (\tau )  }_{X^{-1}_{2,\infty} } ^2\log \left( e+ \frac{ eC_{**} }{ \norm {     f_\lambda  (\tau ) }_{X^{-1}_{2,\infty} }^2}\right)   d\tau   .
			\end{aligned}
		\end{equation*}
		To conclude, we combine the preceding control of the high frequencies with \eqref{f:low:frequencies} on the low frequency part to obtain the ultimate estimate 
		\begin{equation*}
			\norm {f_\lambda(t)}_{X^{-1}_{2,\infty}}^2  \leq \norm {f_{0,\lambda}}_{X^{-1}_{2,\infty}}^2 + C_{**} \lambda t +  C_{**}\int_0^t  \norm {      f_\lambda (\tau )  }_{X^{-1}_{2,\infty} } ^2\log \left( e+ \frac{ eC_{**}}{ \norm {      f_\lambda  (\tau ) }_{X^{-1}_{2,\infty} }^2}\right)   d\tau  ,
		\end{equation*}
		for any $t\in [0,t_*)$.
		At last, applying Osgood lemma \cite[Lemma 3.4]{bcd11} yields that 
		\begin{equation*}
			\norm {f_\lambda(t)}_{X^{-1}_{2,\infty}}^2  \leq  \left( \norm {f_{0,\lambda}}_{X^{-1}_{2,\infty}}^2 + C_{**} \lambda  t \right)  ^{\exp (-C_{**}t)},
		\end{equation*}
		for any $t\in [0,t^*)$. In particular, assuming that $\lambda$ is small enough in the sense that     
		\begin{equation*}
			\norm {f_{0,\lambda}}_{X^{-1}_{2,\infty}}^2 + C_{**} \lambda   T \leq C_{**} e^{-\exp (-C_{**} T)}  < C_{**}
		\end{equation*}
		yields, by classical continuity argument, the validity of the bound 
		\begin{equation*}
			\norm {f_\lambda}_{L^\infty([0,T];X^{-1}_{2,\infty})}  \leq  \left( \norm {f_{0,\lambda}}_{X^{-1}_{2,\infty}}^2 + C_{**} \lambda  T \right)  ^{\frac{1}{2} \exp (-C_{**}T)}
		\end{equation*}
		for the initially fixed time $T>0$, thereby concluding the proof of the theorem.
	\end{proof}
	
	\subsection{Reduction of the problem: Extrapolation Compactness}\label{section:level:two}
	
	Given the convergence of the vorticities in $L^\infty_t B^{-1}_{2,\infty}$ that we established in the preceding section, we can now reduce the problem of the convergence in $ L^\infty_t L^2$ to the study of the evanescence of some suitable very high frequencies of the quasi-geostrophic shallow-water vorticity. This is the content of the next proposition.
	\begin{prop}\label{proposition:reduction}
		Under the assumptions of Theorem \ref{thm:conv:low:reg}, it holds that 
		\begin{equation*}
			\lim_{\lambda\to 0} \norm {\omega_\lambda-\Omega}_{L^\infty ([0,T]; L^2)} =0,
		\end{equation*}
		if and only if
		\begin{equation*}
			\lim_{\lambda\to 0} \norm { \mathds{1}_{|D|\geq \Theta_\lambda } \omega_\lambda}_{L^\infty ([0,T]; L^2)} =0,
		\end{equation*}
		for some $\Theta_\lambda>0$ satisfying 
		\begin{equation*}
			\lim _{\lambda\to 0} \Theta_\lambda = \infty \quad \text{ and } \quad \lim_{\lambda \to 0} \Theta_\lambda \norm {\omega_\lambda-\Omega}_{L^\infty ([0,T]; X^{-1}_{2,\infty})}=0.
		\end{equation*}
		Moreover, for   such  $\Theta_\lambda$ and any smooth    compactly supported function $\chi $, it holds that 
		\begin{equation*}
			\lim_{\lambda\to 0} \norm {\chi( \Theta_\lambda^{-1} |D|) \left(  \omega_\lambda-\Omega \right)}_{L^\infty ([0,T]; L^2)}  =0.
		\end{equation*}
		If furthermore $\chi $ is supported away from zero, then it holds that 
		\begin{equation*}
			\lim_{\lambda\to 0} \norm {\chi( \Theta_\lambda^{-1} D)  \omega_\lambda }_{L^\infty ([0,T]; L^p)}=0,
		\end{equation*}
		for any $p\in [2,\infty)$. 
	\end{prop}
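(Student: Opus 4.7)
My plan is to split the difference $f_\lambda := \omega_\lambda - \Omega$ in Fourier space at scale $\Theta_\lambda$, using Theorem~\ref{thm:conv:low:reg} to control the low frequencies and the intrinsic $L^2$-compactness in time of $\Omega$ itself to handle the very high frequencies. The key estimate is a Bernstein-type bound: for any smooth compactly supported $m$ and any tempered distribution $g$,
\begin{equation*}
\norm{m(\Theta_\lambda^{-1}|D|) g}_{L^2}^2 \lesssim \norm{S_0 \nabla^\perp \Delta^{-1} g}_{L^2}^2 + \sum_{\substack{j\geq 0 \\ 2^j \lesssim \Theta_\lambda}} 2^{2j}\bigl(2^{-j}\norm{\Delta_j g}_{L^2}\bigr)^2 \lesssim \Theta_\lambda^2 \norm{g}_{X^{-1}_{2,\infty}}^2,
\end{equation*}
where the low-frequency block is handled via the identity $g = -\curl \nabla^\perp \Delta^{-1} g$ combined with Bernstein on $S_0$. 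Applied to $g = f_\lambda$ with $m = \chi$ or $m$ a smooth cutoff majorizing $\mathds{1}_{|\cdot|\leq 1}$, the low-frequency piece is bounded in $L^\infty_t L^2$ by $\Theta_\lambda \norm{f_\lambda}_{L^\infty_t X^{-1}_{2,\infty}}$, which vanishes as $\lambda \to 0$ by the standing assumption on $\Theta_\lambda$ and Theorem~\ref{thm:conv:low:reg}. This already proves the second assertion.

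The first assertion then reduces to controlling the high-frequency tail $\mathds{1}_{|D|\geq \Theta_\lambda} f_\lambda = \mathds{1}_{|D|\geq \Theta_\lambda}\omega_\lambda - \mathds{1}_{|D|\geq \Theta_\lambda}\Omega$. Since $\Omega$ is the Yudovich solution of \eqref{Euler}, it is continuous in time into $L^2(\mathbb R^2)$, so the orbit $\{\Omega(t):t\in[0,T]\}$ is compact in $L^2$; a Riesz--Fréchet--Kolmogorov criterion applied on the Fourier side yields the uniform tail decay
\begin{equation*}
\sup_{t\in [0,T]} \norm{\mathds{1}_{|D|\geq R}\Omega(t)}_{L^2(\mathbb R^2)} \xrightarrow[R\to \infty]{} 0.
\end{equation*}
Specializing $R = \Theta_\lambda$, the $\Omega$-contribution to the tail vanishes, and the $\omega_\lambda$-tail is then related by the triangle inequality to $\norm{f_\lambda}_{L^\infty_t L^2}$, which closes both directions of the equivalence.

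For the last $L^p$ statement, the assumption that $\chi$ is supported away from zero localizes $\chi(\Theta_\lambda^{-1} D)$ in the moving annulus $\{c\Theta_\lambda \leq |\xi| \leq C\Theta_\lambda\}$, so combining the two preceding steps gives $\norm{\chi(\Theta_\lambda^{-1} D)\omega_\lambda}_{L^\infty_t L^2} \to 0$. Moreover, $\chi(\Theta_\lambda^{-1} D)$ is convolution against the rescaled Schwartz kernel $\Theta_\lambda^2 \check\chi(\Theta_\lambda\cdot)$, whose $L^1$-norm equals the $\lambda$-independent constant $\norm{\check\chi}_{L^1}$; Young's inequality then yields $\norm{\chi(\Theta_\lambda^{-1} D)\omega_\lambda}_{L^\infty_{t,x}} \lesssim \norm{\omega_\lambda}_{L^\infty_{t,x}} \lesssim 1$. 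Interpolating between the $L^2$-convergence and this uniform $L^\infty$-bound delivers the $L^p$-convergence for every $p \in [2,\infty)$.

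The main obstacle I anticipate is the uniform-in-time Fourier tail decay of $\Omega$: although this is a natural consequence of the $L^2$-continuity in time coming from the Yudovich theory, one has to check carefully that the decay is genuinely uniform on $[0,T]$ rather than merely pointwise in $t$. Everything else is a routine combination of Littlewood--Paley decomposition, Bernstein's inequality, and real interpolation.
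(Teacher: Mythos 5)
Your proof is correct and follows essentially the same route as the paper: split the difference in frequency at the scale $\Theta_\lambda$, use the Bernstein-type bound $\norm{\mathds 1_{|D|\lesssim \Theta_\lambda}g}_{L^2}\lesssim \Theta_\lambda \norm{g}_{X^{-1}_{2,\infty}}$ together with Theorem~\ref{thm:conv:low:reg} for the low-to-intermediate block, invoke the $C([0,T];L^2)$ regularity of $\Omega$ to get the uniform high-frequency tail decay, and finish the $L^p$ case by interpolation against the uniform $L^\infty_{t,x}$ bound. Your worry about the uniformity in time of the Fourier tail is exactly the point the paper also flags (it references time-continuity of $\Omega$ and the related discussion in \cite[Section~1.5.2]{ah2}), and the compactness-of-the-orbit argument you sketch is a sound way to justify it.
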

	\begin{rem}
		In the preceding statement, and only in the case $p=2$, the function $\chi$ can be replaced by  a rough Fourier multiplier of  the form
		$$\mathds 1_{\{  \Theta_\lambda a \leq  |D| \leq  \Theta_\lambda b\}}(\cdot),$$
		for any $0<a< b <\infty$.  
	\end{rem}
	
	\begin{proof} 
		The direct implication readily follows from the triangular inequality 
		\begin{equation*}
			\norm { \mathds{1}_{|D|\geq \Theta_\lambda } \omega_\lambda}_{L^\infty ([0,T]; L^2)} \leq \norm { \mathds{1}_{|D|\geq \Theta_\lambda } \Omega}_{L^\infty ([0,T]; L^2)} +   \norm {   \omega_\lambda-\Omega  }_{L^\infty ([0,T]; L^2)},
		\end{equation*}
		and the fact that $\Omega \in C([0,T];L^2)$ without any dependence on $\lambda$. Notice that the continuity with respect to the time variable is very important here. We refer to \cite[Section 1.5.2]{ah2} for a comprehensive  discussion on that matter.
		
		As for the indirect  implication, we begin by writing that 
		\begin{equation*}
			\begin{aligned}
				\norm {   \omega_\lambda-\Omega  }_{L^\infty ([0,T]; L^2)}
				& \leq  \norm { S_0(  \omega_\lambda-\Omega ) }_{L^\infty ([0,T]; L^2)} +   \norm {(\id - S_0)(  \omega_\lambda-\Omega ) }_{L^\infty ([0,T]; L^2)} 
				\\
				& \lesssim  \norm { S_0\nabla ^\perp \Delta^{-1} ( \omega_\lambda-\Omega ) }_{L^\infty ([0,T]; L^2)} + \norm {\mathds 1_{|D|\geq  \Theta_\lambda}   \Omega ) }_{L^\infty ([0,T]; L^2)}  
				\\
				& \quad + \norm {\mathds 1_{|D|\geq  \Theta_\lambda}  \omega_\lambda}_{L^\infty ([0,T]; L^2)}   + \norm {\mathds 1_{|D|<\Theta_\lambda} (\id - S_0)(  \omega_\lambda-\Omega ) }_{L^\infty ([0,T]; L^2)} .
			\end{aligned}
		\end{equation*}
		Now, noticing that 
		\begin{equation}\label{low:frequency:cv}
			\begin{aligned}
				\norm {\mathds 1_{|D|<\Theta_\lambda} (\id - S_0)(  \omega_\lambda-\Omega ) (t) }_{ L^2} ^2
				& = \sum_{j=0}^{\log_2(\Theta_\lambda)}  \norm {\Delta_j (\omega_\lambda-\Omega )}_{L^2}^2
				\\
				& \leq \left(\sum_{j=0}^{\log_2(\Theta_\lambda)} 2^{2j} \right) \norm {(\id - S_0)( \omega_\lambda - \Omega)}_{B^{-1}_{2,\infty}}^2
				\\
				& \lesssim \Theta_\lambda^2  \norm {(\id - S_0)( \omega_\lambda - \Omega)}_{B^{-1}_{2,\infty}}^2,
			\end{aligned}
		\end{equation}
		it   follows, by employing the convergence result from the preceding section in     $X^{-1}_{2,\infty} \hookrightarrow B^{-1}_{2,\infty}$, that 
		\begin{equation*}
			\lim_{\lambda \to 0} \norm {   \omega_\lambda-\Omega  }_{L^\infty ([0,T]; L^2)} \leq \lim_{\lambda \to 0}  \norm {\mathds 1_{|D|\geq  \Theta_\lambda}   \omega_\lambda ) }_{L^\infty ([0,T]; L^2)} ,
		\end{equation*}
		as soon as 
		\begin{equation*}
			\lim_{\lambda \to 0} \Theta_\lambda\norm {   \omega_\lambda-\Omega_\lambda  }_{L^\infty ([0,T]; X^{-1}_{2,\infty})}= 0.
		\end{equation*}
		Once again, it is important to emphasize  that, here, the time-continuity of the  vorticity  $\Omega$  has been used to guarantee that  
		\begin{equation*}
			\lim_{\lambda \to 0} \norm {\mathds 1_{|D|\geq  \Theta_\lambda}  \Omega   }_{L^\infty ([0,T]; L^2)}  =0.
		\end{equation*}
		This completes the proof of the first part of the proposition. 
		
		As for   the convergence of the localized vorticities by any compactly supported $\chi$, it follows by following exactly the same approach presented in \eqref{low:frequency:cv}, whence we omit the details of that. 
		
		Finally, to establish our last claim, that is the evanescence of $\omega_\lambda$ when it is  localized on a rescaled annulus, we first notice that it is sufficient to prove the statement for $p=2$. Indeed, the convergence in the remaining range of parameters follows by interpolation. 
		
		Now, for $p=2$, we write that 
		\begin{equation*}
			\begin{aligned}
				\norm {\chi( \Theta_\lambda^{-1} |D|)  \omega_\lambda }_{L^\infty ([0,T]; L^2)} \leq \norm {\chi( \Theta_\lambda^{-1} |D|) ( \omega_\lambda - \Omega) }_{L^\infty ([0,T]; L^2)} + \norm {\chi( \Theta_\lambda^{-1} |D|)  \Omega }_{L^\infty ([0,T]; L^2)}.
			\end{aligned}
		\end{equation*}
		Therefore, the first term on the right-hand side vanishes in the limit $\lambda\to 0$ by virtue of the result of the preceding step, while the evanescence of the second term   is a consequence of the additional assumption that $\chi $ is supported away from the origin, and of course by employing the fact that $\Omega\in C([0,T];L^2)$, once again. The proof of the proposition is now completed.
	\end{proof}

	\subsection{Evanescence of high-frequency modes in high-regularity spaces}\label{section:level:three}
	
	Given the reformulation of the problem suggested in Proposition \ref{proposition:reduction}, the proof of the Theorem \ref{thm:main} is now reduced to the study of the evanescence of the very high frequencies 
	\begin{equation*}
		\mathds 1_{|D|> \Theta_\lambda} \omega_\lambda,
	\end{equation*}
	for a choice of a suitable cutoff parameter $\Theta_\lambda>0$ that will be made later on in such a way that 
	\begin{equation*}
		\quad 1 \ll  \Theta_\lambda  \ll   \left( \norm {\omega_\lambda-\Omega}_{L^\infty ([0,T]; X^{-1}_{2,\infty})}\right)^{-1} , \quad \text{for }  \lambda \ll 1.
	\end{equation*}
	This last piece of our proof is recovered by the next theorem, which presents the core part of this section.
	\begin{thm}\label{thm:high}
		Let $\omega_{0,\lambda}$ and $\Omega_0$ be an initial data for \eqref{Euler} and \eqref{QGSW}, respectively, belonging to the Yudivch class, i.e., 
		\begin{equation*}
			\omega_{0,\lambda}, \Omega_0 \in L^1\cap L^\infty(\mathbb R^2),
		\end{equation*}
		uniformly in $\lambda>0$. Assume further that 
		\begin{equation*}
			\lim_{\lambda\to 0} \left(  \norm {\nabla^\perp \Delta^{-1}(  \omega_{0,\lambda}-\Omega_0)}_{ L^2(\mathbb R^2)}+  \norm {\omega_{0,\lambda}-\Omega_0}_{ L^2(\mathbb R^2)}\right) =0.
		\end{equation*}
		Then, the family of functions  
		\begin{equation*}
			  \left( \mathds 1_{|D|\geq \Theta_\lambda} \omega_\lambda  \right)_{\lambda \ll1}
		\end{equation*}
		is strongly compact in $L^\infty ([0,T];L^2(\mathbb R^2))$, converging to zero, for any finite time $T>0$ and any family of parameters $(\Theta_\lambda)_{\lambda\in (0,1)}$ satisfying 
		\begin{equation*}
			\lim _{\lambda\to 0} \Theta_\lambda = \infty \quad \text{ and } \quad \lim_{\lambda \to 0} \Theta_\lambda \left(  \norm {\nabla^\perp \Delta^{-1}(  \omega_{0,\lambda}-\Omega_0)}_{ L^2(\mathbb R^2)} ^2 + C_{**} \lambda  T \right)  ^{\frac{1}{2} \exp (-C_{**}T)}=0.
		\end{equation*}
			\end{thm}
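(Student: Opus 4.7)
The plan is to specialize the general high-frequency transport bound of Proposition \ref{prop:high:transport} to the vorticity equation in \eqref{QGSW}, regarded as the divergence-free transport $\partial_t\omega_\lambda + u_\lambda\cdot\nabla\omega_\lambda = 0$ with the log-Lipschitz velocity $u_\lambda = \nabla^\perp(\lambda-\Delta)^{-1}\omega_\lambda$; the log-Lipschitz bound is uniform in $\lambda$ thanks to the Yudovich control $\omega_\lambda\in L^\infty_t(L^1\cap L^\infty)$, and the parameter $\delta$ of Proposition \ref{prop:high:transport} is taken to be $\lambda$ with zero source term. The argument combines two separate small quantities and transfers them, via the abstract extrapolation mechanism, to the very high frequencies of $\omega_\lambda(t)$ for all $t\in[0,T]$.

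The first smallness comes from the assumption $\|\omega_{0,\lambda}-\Omega_0\|_{L^2}\to 0$: passing to Fourier transforms and using dominated convergence on the fixed target $\widehat{\Omega_0}$, one obtains $\|\mathds{1}_{|D|\geq\Theta_\lambda}\omega_{0,\lambda}\|_{L^2}\to 0$ for any family $\Theta_\lambda\to\infty$. The second smallness comes from Theorem \ref{thm:conv:low:reg}, which yields $\|\omega_\lambda-\Omega\|_{L^\infty_T X^{-1}_{2,\infty}}\leq\varepsilon(\lambda,T)$ with $\Theta_\lambda\,\varepsilon(\lambda,T)\to 0$ by hypothesis on the admissible scaling of $\Theta_\lambda$. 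Applying $\Delta_j$ to the transport equation and running an $L^2$ energy estimate, I carry out the commutator calculation of Theorem \ref{thm:conv:low:reg} (the terms $K_j^1$--$K_j^3$) in spectrally localized form for $j\geq\log_2\Theta_\lambda$, using the commutator decomposition of Lemma \ref{lemma:commutator}. Every time a low-regularity Besov norm of the solution appears in a correction term, I split $\omega_\lambda = \Omega + (\omega_\lambda-\Omega)$: the $\Omega$-part is absorbed via its $\lambda$-independent time-continuity in $L^2$, exactly as in the proof of Proposition \ref{proposition:reduction}; the $(\omega_\lambda-\Omega)$-part is at most linear in $\|\omega_\lambda-\Omega\|_{X^{-1}_{2,\infty}}$ after paying at worst a factor $\Theta_\lambda$ from the spectral localization (precisely as in the estimate \eqref{low:frequency:cv}), and the resulting product is small by the second smallness above. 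An Osgood-type argument identical to the one closing the proof of Theorem \ref{thm:conv:low:reg}, together with dyadic resummation over $j\geq\log_2\Theta_\lambda$, then yields $\|\mathds{1}_{|D|\geq\Theta_\lambda}\omega_\lambda\|_{L^\infty_T L^2}\to 0$.

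The main obstacle, and the reason the $X^{-1}_{2,\infty}$-convergence of the previous section cannot be used directly, is the frequency migration inherent to transport by a merely log-Lipschitz velocity: mass localized near frequency $\Theta_\lambda$ at time $t$ may have originated from frequencies as low as $\Theta_\lambda^{\exp(-C_{**}t)}$ at time zero, which is precisely the quantitative loss encoded in the time-dependent exponent $\exp(-C_{**}T)$ of the admissibility condition imposed on $\Theta_\lambda$. This is what makes the sharp, quantitative tracking of very high frequencies in Proposition \ref{prop:high:transport} indispensable and forces the same logarithmic interpolation of Lemma \ref{lemma:interIneq} used in Theorem \ref{thm:conv:low:reg}, now applied in a high-frequency-localized regime.
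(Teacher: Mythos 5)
Your high-level strategy is in the right neighborhood: you invoke Proposition~\ref{prop:high:transport}, feed in the two smallness inputs (strong convergence of the initial data and the $X^{-1}_{2,\infty}$-bound from Theorem~\ref{thm:conv:low:reg}), and correctly identify the frequency-migration phenomenon that forces the admissibility constraint involving $\exp(-C_{**}T)$. The reduction of the initial-data term via the splitting $\omega_{0,\lambda}=\Omega_0+(\omega_{0,\lambda}-\Omega_0)$ and dominated convergence is also fine.

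However, the technical execution you describe diverges from the paper's and does not close. You propose to apply $\Delta_j$ for $j\geq\log_2\Theta_\lambda$, redo the commutator estimate $K^1_j$--$K^3_j$ from Theorem~\ref{thm:conv:low:reg}, and then ``dyadically resum''. This is internally inconsistent with your opening claim of merely specializing Proposition~\ref{prop:high:transport} (which is explicitly commutator-free), and more importantly it has a summability gap. The $K^1_j$--$K^3_j$ estimates of Theorem~\ref{thm:conv:low:reg} produce bounds of the form $2^{-j}\|\Delta_j f_\lambda\|_{L^2}\lesssim(\cdots)$; they are designed for a $\sup_j$ in the $X^{-1}_{2,\infty}$ norm and do not deliver the $\ell^2$ tail $\sum_{j\geq\log_2\Theta_\lambda}\|\Delta_j\omega_\lambda\|_{L^2}^2$ that would be needed for $L^2$. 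Moreover, the commutator term $K^3_j$ only closes in Theorem~\ref{thm:conv:low:reg} through an Osgood argument exploiting that the unknown $\|f_\lambda\|_{X^{-1}_{2,\infty}}$ is small; at the $L^2$ level the analogous unknown $\|\omega_\lambda\|_{L^2}$ is conserved, not small, so the same log-compensation does not help. Your mention of ``an Osgood-type argument identical to the one closing the proof of Theorem~\ref{thm:conv:low:reg}'' is therefore misplaced.

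The paper's actual proof sidesteps commutators entirely. Proposition~\ref{prop:high:transport} is derived by writing commutator-free energy identities for the low-frequency block $\sqrt{\widetilde S_0}g$ and for the total energy, subtracting them, and decomposing the resulting transport integral $\mathcal J$ into $\mathcal J_1,\mathcal J_2,\mathcal J_3$ purely by Fourier-support considerations; the far-apart pieces $\mathcal J_2,\mathcal J_3$ are controlled by a \emph{spectrally localized} $\nabla h$, not a full commutator. Specializing to $g=\omega_\lambda$, $h=u_\lambda$, $F=0$, $q=m=2$, one then bounds the annular velocity piece $\|\mathds 1_{|D|\geq\Theta_\lambda/12}\nabla u_\lambda\|_{L^2}$ by annular and high-frequency pieces of $\omega_\lambda$, invokes Proposition~\ref{proposition:reduction} (not a fresh calculation) for the vanishing of the annular pieces, and closes with \emph{Gr\"onwall} --- the right-hand side is linear in $\|\sqrt{\mathrm{Id}-\widetilde S_0}\omega_\lambda\|_{L^2}^2$ once $\|\omega_\lambda\|_{L^\infty_{t,x}}$ is extracted. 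That is the mechanism you should be using in place of the dyadic-commutator-plus-Osgood loop.
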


	As we emphasized earlier in Section \ref{section:challenge_and_proof}, in our discussion below, we actually will     establish a slightly stronger statement that, eventually, allows us to deduce Theorem \ref{thm:high} as a corollary. We recall that the more general setting of the problem here is the following: Let $g$ be a (scalar) solution of the (non)linear transport equation
	\begin{equation}\label{transport:EQ}
		\partial_t g + h\cdot \nabla g = F, \qquad g|_{t=0}=g_0,
	\end{equation}
	where $F$ is a source term, and $h$ is a divergence-free vector field (both potentially depend on the unknown  $g$).  
 Assuming   that   $h$, $g_0$ and/or $F$, and eventually $g$, depend on some parameter $\delta\in (0,1)$ in such a way that the dependence of the   problem's inputs (i.e., $F$, $g_0$, and possibly $h$ in a rough sense) is compact in Lebesgue spaces.  we then would like to investigate whether this compactness property can be extended to $g(t)$, for any positive times $t>0$.
 
	We give in the statement of 	Proposition \ref{prop:high:transport} below a quantitative bound on the behavior of the very high frequencies of $g$, which is essential in deriving a complete statement on the compactness of $g(t)$,  for any positive times $t>0$, under reasonnable assumptions on the velocity field $h$, the source term $F$ and the initial datum $g_0.$

	Before we proceed any further, we introduce some notations. Following the analysis from  \cite{ah2}, for a given rescaling parameter $\Theta>0$, we define the  convolution operators  
	\begin{equation}\label{S0:def}
		\widetilde  S_0   \bydef \psi(\Theta ^{-1}D)    ,\qquad \widetilde \Delta_j \bydef \widetilde \varphi_j(  D) \bydef \varphi_j( \Theta^{-1} D), \quad j\in \mathbb Z,
	\end{equation}
	where $\psi $ and $\varphi$ refer to the building blocks of the Littlewood-Paley theory, previously introduced in Section \ref{section:littlewood_paley}. Note that, in this section, we will be using the homogeneous counterpart of the dyadic decomposition from Section \ref{section:littlewood_paley}. The difference compared to the inhomogeneous setting is that the identities 
	\begin{equation*}
		\varphi_j (D)=0, \quad \text{for all }  j\leq -2, \qquad \text{and} \qquad \varphi_{-1} = \psi ,
	\end{equation*}
 previously imposed in the inhomogeneous setting, 	are no longer being imposed now. Instead, the definition of $\varphi _j(D)$  is fully given by the rescaling identity \eqref{S0:def}, for any $j\in \mathbb Z$, and not just for $j\in \mathbb N$. Accordingly, one has the two-side partition of unity 
	$$ \widetilde  S_0+  \sum_{j\geq 0}\widetilde \Delta_j= \mathrm{Id}= \sum_{j\in \mathbb Z}\widetilde \Delta_j$$
	in the sense of $\mathcal S'(\mathbb R^2)$, excluding non-zero polynomials.  Further introducing the Fourier-multiplier operators
	$$ \sqrt{\widetilde S_0} \bydef \sqrt{\psi   (\Theta^{-1} D)}
	\qquad\text{and}\qquad
	\sqrt{\mathrm{Id}-\widetilde S_0}\bydef \sqrt{1-\psi  (\Theta^{-1}D)} ,$$
	and observing that
	\begin{equation*}
		\left( \sum_{j\geq 0} \widetilde \varphi_j(\xi) \right)^2
		\leq \sum_{j\geq 0} \widetilde \varphi_j(\xi), \quad \text{for all } \xi \in \mathbb R^2,
	\end{equation*}
	it holds, for any given $g\in L^2(\mathbb R^2)$, that
	\begin{equation*}
		\norm{ (\mathrm{Id}-\widetilde  S_0 )g }_{L^2(\mathbb R^2)}
		\leq
		\norm{ \sqrt{\mathrm{Id}-\widetilde  S_0 }g}_{L^2(\mathbb R^2)},
	\end{equation*}
	which will come in handy later on.
	
	We are now in a position to state the main result of this section, which is about the behavior of the high Fourier modes of the solution   of \eqref{transport:EQ}. This will be employed afterwards to prove Theorem \ref{thm:high}.

	\begin{prop}\label{prop:high:transport}
		Let  $g$ be a solution of \eqref{transport:EQ} enjoying the bound  
		\begin{equation*}
			g\in L^\infty ([0,T];L^2 \cap L^\frac{2m}{m-2} \cap L^\frac{2q}{q-1} (\mathbb R^2)),
		\end{equation*}
		for some $T>0$ and $q,m\in [2,\infty]$. Then, it holds that 
		\begin{equation*}
			\begin{aligned}
				\frac{1}{2 }\norm{  \sqrt{\mathrm{Id} - \widetilde S_0} g (t )   }_{L^2}^2 & \leq  \frac{1}{2 }\norm{  \sqrt{\mathrm{Id} - \widetilde S_0} g_0 }_{L^2}^2+ \int_0^t  \norm{  \sqrt{\mathrm{Id} - \widetilde S_0}F(\tau)}_{L^2} \norm{  \sqrt{\mathrm{Id} - \widetilde S_0}g(\tau)}_{L^2}d\tau 
				\\ & \quad + C\sum_{i=-1}^0 \int_0^t\norm {\nabla h(\tau)}_{L^q}  \norm {\widetilde\Delta_i g(\tau)}_{L^\frac{2q}{q-1} }^2 d\tau 
				\\
				&\quad +  C \int_0^t\norm { \mathds{1}_{|D|\geq  \frac{1}{12}\Theta }\nabla h (\tau) }_{L^m}  \norm {   g(\tau)}_{L^\frac{2m}{m-2}}   \norm {   \widetilde\Delta_0g(\tau)}_{L^2}   d\tau 
				\\
				&\quad +  C \int_0^t\norm { \mathds{1}_{|D|\geq  \frac{1}{12}\Theta }\nabla h (\tau) }_{L^m}  \norm {   g(\tau)}_{L^\frac{2m}{m-2}}  \norm {  \sqrt{ \mathrm{Id}-\widetilde S_0 }g(\tau)}_{L^2}   d\tau ,
			\end{aligned}
		\end{equation*}
		for some universal constant $C>0$, and any $t\in [0,T]$.
	\end{prop}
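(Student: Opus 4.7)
The natural strategy is a localized $L^2$ energy estimate adapted to the high-frequency projector $\sqrt{\mathrm{Id}-\widetilde S_0}$. Applying this Fourier multiplier to \eqref{transport:EQ} gives
\begin{equation*}
\partial_t \sqrt{\mathrm{Id}-\widetilde S_0}g + h\cdot\nabla\sqrt{\mathrm{Id}-\widetilde S_0}g = \sqrt{\mathrm{Id}-\widetilde S_0}F + [h\cdot\nabla,\sqrt{\mathrm{Id}-\widetilde S_0}]g,
\end{equation*}
and pairing with $\sqrt{\mathrm{Id}-\widetilde S_0}g$ in $L^2$, using $\div h = 0$ to annihilate the transport term, produces exactly the initial-data and $F$-contributions already present in the claim, plus a single commutator term
\begin{equation*}
\mathcal C(t) = \int_0^t \int_{\mathbb R^2}[h\cdot\nabla,\sqrt{\mathrm{Id}-\widetilde S_0}]g\cdot\sqrt{\mathrm{Id}-\widetilde S_0}g\,dx\,d\tau
\end{equation*}
which must reproduce the three remaining terms of the estimate.

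The key move is to split $h$ dyadically at the same scale $\Theta$: write $h = h_{\mathrm L} + h_{\mathrm H}$, where $h_{\mathrm H}$ has Fourier support in $\{|\xi|\gtrsim \Theta/12\}$. The constant $\tfrac{1}{12}$ is dictated by the geometry of $\supp\psi$ and $\supp\varphi$, ensuring on the one hand that $h_{\mathrm L}$ lies at frequencies much smaller than the transition zone $|\xi|\sim \Theta$ of $\psi(\Theta^{-1}\cdot)$, and on the other hand that the supports of $\widetilde\Delta_{-1}$ and $\widetilde\Delta_0$ cover that transition zone with appropriate margins. This decomposes $\mathcal C = \mathcal C_{\mathrm L} + \mathcal C_{\mathrm H}$. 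For $\mathcal C_{\mathrm L}$, the classical symbolic commutator estimate of Lemma 2.97 of \cite{bcd11}, suitably rescaled by $\Theta^{-1}$, shows that $[h_{\mathrm L}\cdot\nabla,\sqrt{\mathrm{Id}-\widetilde S_0}]$ is essentially a Fourier multiplier supported on $|\xi|\sim\Theta$, hence its action on $g$ is equivalent (up to universal constants) to its action on $\widetilde\Delta_{-1}g + \widetilde\Delta_0 g$. Pairing with $\sqrt{\mathrm{Id}-\widetilde S_0}g$, which is likewise approximately localized to the same annulus through $\widetilde\Delta_{-1} + \widetilde\Delta_0$, and applying H\"older with exponents $\bigl(q, \tfrac{2q}{q-1}, \tfrac{2q}{q-1}\bigr)$, produces the first $\sum_{i=-1}^0\|\nabla h\|_{L^q}\|\widetilde\Delta_i g\|^2_{L^{2q/(q-1)}}$ contribution. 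For $\mathcal C_{\mathrm H}$ no cancellation is needed: using self-adjointness of $\sqrt{\mathrm{Id}-\widetilde S_0}$ and $\div h_{\mathrm H} = 0$ (preserved by the smooth frequency cutoff), one reduces $\mathcal C_{\mathrm H}$ to an integral of the form $\int (h_{\mathrm H}\cdot\nabla g)(\mathrm{Id}-\widetilde S_0)g\,dx$, splits the last factor as $\widetilde\Delta_0 g$ plus a tail still controlled by $\sqrt{\mathrm{Id}-\widetilde S_0} g$, and applies H\"older with exponents $(m,\tfrac{2m}{m-2},2)$ using $\|\mathds 1_{|D|\geq \Theta/12}\nabla h\|_{L^m}$ for the $h_{\mathrm H}$-factor; this generates exactly the two remaining terms.

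The main obstacle is the low-frequency commutator estimate: making rigorous the heuristic that $[h_{\mathrm L}\cdot\nabla,\sqrt{\mathrm{Id}-\widetilde S_0}]$ is frequency-localized near $|\xi|\sim\Theta$, and propagating that localization through \emph{both} factors of the $L^2$-pairing so that only the blocks $\widetilde\Delta_{-1}g$ and $\widetilde\Delta_0g$ survive in the final bound. It is precisely this sharp two-sided localization that makes the estimate quantitative in $\Theta$, which is the property Proposition~\ref{prop:high:transport} is designed to deliver for the subsequent application to Theorem~\ref{thm:high}.
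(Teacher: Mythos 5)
Your plan shares the high-level skeleton of the paper's argument (an $L^2$ energy estimate on the high-frequency part, followed by a frequency decomposition and H\"older), but the decomposition is genuinely different: the paper leaves $h$ intact and splits the interaction $\mathcal J(t)=\int_0^t\int h\cdot\nabla\bigl(\sum_{j\le -1}\widetilde\Delta_j g\bigr)\sum_{j\ge 0}\widetilde\Delta_j g$ into three pieces ($\mathcal J_1,\mathcal J_2,\mathcal J_3$) according to how far apart the two $g$-blocks lie; only the ``nearest-neighbor'' piece $\mathcal J_1$ survives without the $\mathds 1_{|D|\ge\Theta/12}$ cutoff on $h$, and it is bounded by a ready-made lemma from \cite{ah2}. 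You instead split $h=h_{\mathrm L}+h_{\mathrm H}$ at scale $\Theta$ and try to control the resulting commutator $[h\cdot\nabla,\sqrt{\mathrm{Id}-\widetilde S_0}]$ piecewise. These are related but not identical routes, and your route is plausible in spirit.

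However, there are concrete gaps. First, and most importantly, your opening step --- pairing the high-pass-filtered equation with $\sqrt{\mathrm{Id}-\widetilde S_0}g$ and ``using $\div h=0$ to annihilate the transport term'' --- invokes the cancellation $\int h\cdot\nabla\bigl(\sqrt{\mathrm{Id}-\widetilde S_0}g\bigr)\sqrt{\mathrm{Id}-\widetilde S_0}g\,dx=0$, which for $g\in L^2$ only and log-Lipschitz $h$ is purely formal: the integrand is not absolutely integrable. The paper explicitly points this out and circumvents it by writing the rigorous energy identity for the \emph{low} part $\widetilde S_0 g$ (smooth, so the cancellation is licit) together with the classical $L^2$ identity for the full solution $g$, then subtracting. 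Your plan does not address this. Second, your claim that ``for $\mathcal C_{\mathrm H}$ no cancellation is needed'' is incorrect: after reducing to $\int(h_{\mathrm H}\cdot\nabla g)(\mathrm{Id}-\widetilde S_0)g$, the factor $\nabla g$ is not in any Lebesgue space, so the H\"older you describe does not close; you must further split $g=\widetilde S_0 g+(\mathrm{Id}-\widetilde S_0)g$ and invoke the very same divergence-free cancellation to discard the high-high piece before the $\nabla$ lands safely on the smooth $\widetilde S_0 g$. Third, the $\mathcal C_{\mathrm L}$ estimate --- the two-sided frequency localization of the commutator that would produce the $\sum_{i=-1}^0\|\nabla h\|_{L^q}\|\widetilde\Delta_i g\|_{L^{2q/(q-1)}}^2$ term --- is not carried out; you identify it as the main obstacle but do not resolve it, whereas the paper isolates the analogous hardest piece $\mathcal J_1$ and invokes a dedicated result from \cite{ah2} (Lemma~1.7 there). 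As written, the proposal is an outline with three holes, one of which (the first) reflects a genuine oversight rather than mere incompleteness.
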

	
	\begin{rem} One possible implication of the bound from the preceding lemma is that 
	\begin{equation*}
			\begin{aligned}
				 e^{-Ct}\norm{  \sqrt{\mathrm{Id} - \widetilde S_0} g   }_{L^\infty([0,t]; L^2)}^2 & \lesssim  \norm{  \sqrt{\mathrm{Id} - \widetilde S_0} g_0 }_{L^2}^2+ \left(  \int_0^t \norm{  \sqrt{\mathrm{Id} - \widetilde S_0}F(\tau)}_{L^2} d\tau \right)^2
				\\ & \quad + \sum_{i=-1}^0 \int_0^t\norm {\nabla h(\tau)}_{L^q}  \norm {\widetilde\Delta_i g(\tau)}_{L^\frac{2q}{q-1} }^2 d\tau 
				\\
				&\quad +   \int_0^t\norm { \mathds{1}_{|D|\geq  \frac{1}{12}\Theta }\nabla h (\tau) }_{L^m}  \norm {   g(\tau)}_{L^\frac{2m}{m-2}} \norm {   \widetilde\Delta_0g(\tau)}_{L^2}    d\tau 
				\\
				&\quad +  \left( \int_0^t\norm { \mathds{1}_{|D|\geq  \frac{1}{12}\Theta }\nabla h (\tau) }_{L^m}  \norm {   g(\tau)}_{L^\frac{2m}{m-2}}   d\tau \right)^2.
			\end{aligned}
		\end{equation*}
		The takeaway from this is that the evolution of high Fourier modes of the solution $g$ of a transport equation, advected by a velocity field $h$ and forced by a source term $F$, is controlled by the same high modes of the inputs (that is the initial data, the velocity and the source term), modulo a localized  (lower-order) counterpart of the solution itself. In particular, compactness properties for the solution $g$ in $L^p$ spaces can be obtained as a consequences of   weaker compactness results on the solution $g$ itself.
	\end{rem}

	\begin{proof}

		Formally, taking the  inner product of  \eqref{transport:EQ}   with $  \sum_{j\geq 0}\widetilde \Delta_j g $ and using the divergence-free condition of $h$, we find that
		\begin{equation}\label{energy-HF}
			\begin{aligned}
				\frac{1}{2 }\norm{  \sqrt{\mathrm{Id} - \widetilde S_0} g (t )   }_{L^2}^2 &= \frac{1}{2 }\norm{  \sqrt{\mathrm{Id} - \widetilde S_0} g_0 }_{L^2}^2+ \int_0^t  \int_{\mathbb{R}^2}  F   (\mathrm{Id} - \widetilde S_0 ) g(\tau,x)  dxd\tau\\
				& \quad -\underbrace{ \int_0^t  \int_{\mathbb{R}^2}h\cdot \nabla \left( \sum_{j\leq -1}\widetilde\Delta_j g  \right)  \sum_{j\geq 0}\widetilde \Delta_j g(\tau,x)     dxd\tau }_{\bydef\mathcal{J}(t)} .
			\end{aligned}
		\end{equation}
		In fact, notice that the preceding bound cannot   be obtained by directly taking the inner product with $  \sum_{j\geq 0}\widetilde \Delta_j g $, as, in this case, one needs to justify the formal cancelation  
		\begin{equation*}
			\int_{\mathbb{R}^2} h(\tau,x)\cdot \nabla \left(\sum_{j\geq 0}\widetilde\Delta_jg(\tau,x) \right) \left(\sum_{j\geq 0}\widetilde\Delta_jg(\tau,x) \right)dx =0, \quad \text{for all } \tau \in [0,T].
		\end{equation*}
		Instead, as it is explained in \cite{ah2} for the Euler equations, one way to obtain \eqref{energy-HF} is by first writing the energy identity for the low Fourier modes
		\begin{equation*} 
			\begin{aligned}
				\frac 12 \norm {  \sqrt{\widetilde  S_0 }  g(t)  }_{L^2}^2  
				& = \frac 12 \norm {\sqrt{\widetilde S_0}g_0  }_{L^2}^2  + \int_0^t\int_{\mathbb{R}^2}   F     \widetilde  S_0g (\tau,x)   dxd\tau \\
				& \quad +\int_0^t  \int_{\mathbb{R}^2} h\cdot \nabla (\widetilde  S_0g )(\mathrm{Id}-\widetilde  S_0 ) g (\tau,x)  dxd\tau  ,
			\end{aligned}
		\end{equation*} 
		that   utilizing  the cancelation identity 
		\begin{equation*}
			\int_{\mathbb{R}^2} h(\tau,x)\cdot \nabla \left( \widetilde S_0 g(\tau,x) \right) \widetilde S_0 g(\tau,x)  dx =0, \quad \text{for all } \tau \in [0,T],
		\end{equation*}
		which holds due to the smoothing effect of the operator $\widetilde S_0$, for any $h(\tau,\cdot)\in L^2$ by density.
		Then, by further using the classical  identity
		\begin{equation*} 
			\begin{aligned}
				\frac 12\norm {   g(t)  }_{L^2}^2  
				& =  \frac 12 \norm {g_0  }_{L^2}^2  + \int_0^t\int_{\mathbb{R}^2}  Fg (\tau,x)    dxd\tau ,
			\end{aligned}
		\end{equation*}
		we see that \eqref{energy-HF} follows from the fact that 
		$$\norm{ g (t)}_{L^2}^2 = \norm{\sqrt{\widetilde  S_0 } g(t) }_{L^2}^2 + \norm{ \sqrt{\mathrm{Id}-\widetilde  S_0 }g(t)  }_{L^2}^2.$$ 
		
		Now, we get back to \eqref{energy-HF} where our main task is to estimate $\mathcal{J}(t)$. To that end, we split it into a sum of three terms
		\begin{equation*} 
			\begin{aligned}
				\mathcal{J}_1(t) &\bydef \int_0^t \int_{\mathbb{R}^2}      h  \cdot \nabla \left(\widetilde  \Delta_{-1}  g(\tau)  \right) \widetilde \Delta_0 g (\tau,x)   dx d\tau ,
				\\
				\mathcal{J}_2(t) &\bydef   \int_0^t  \int_{\mathbb{R}^2}     h   \cdot \nabla \left(  \sum_{j\leq -2}\widetilde \Delta_j g \right) \sum_{j\geq 0}\widetilde \Delta_jg(\tau,x)     dxd\tau ,
				\\
				\mathcal{J}_3(t) &\bydef   \int_0^t  \int_{\mathbb{R}^2}     h  \cdot \nabla \left(  \widetilde \Delta_{-1}g  \right) \sum_{j\geq 1}\widetilde \Delta_jg(\tau,x)   dxd\tau ,
			\end{aligned}
		\end{equation*}
		which we now control separately.
		The term $\mathcal{J}_1 $ is the most difficult to estimate. However, this has already been taken care of in \cite[Lemma 1.7]{ah2} where it has been shown that
		\begin{equation*} 
			|\mathcal{J}_1(t)| \lesssim \sum_{i=-1}^0 \int_0^t\norm {\nabla h(\tau)}_{L^q}  \norm {\widetilde\Delta_i g(\tau)}_{L^\frac{2q}{q-1} }^2 d\tau,
		\end{equation*} 
		for any $q\in [2,\infty]$. 
		As for $  \mathcal{J}_2$ and $\mathcal{J}_3$, we begin with exploiting the support localizations
		\begin{equation*}
			\begin{aligned}
				\supp \mathcal{F}\left(  \sum_{j\leq -2}\widetilde \Delta_jg  \right)
				&= \left\{ \xi\in \mathbb{R}^2: |\xi| \leq \frac{2\Theta }{3} \right\}, 
				\\
				\supp \mathcal{F}\left(  \sum_{j\geq 0}\widetilde\Delta_jg   \right)
				&= \left\{ \xi\in \mathbb{R}^2: |\xi| \geq  \frac{3\Theta}{4} \right\}  , 
			\end{aligned}
		\end{equation*}
		to deduce that  
		$$ \supp \mathcal{F}\left(  \sum_{j\leq -2} \widetilde \Delta_jg \sum_{j\geq 0}\widetilde \Delta_jg   \right) \subset  \left\{ \xi\in \mathbb{R}^2: |\xi| \geq  \frac{\Theta }{12} \right\}.$$
		Similarly,  the fact that
		\begin{equation*}
			\begin{aligned}
				\supp \mathcal{F}\left(   \widetilde\Delta_{-1}g  \right) &\subset \left\{ \xi\in \mathbb{R}^2: |\xi| \leq \frac{4\Theta }{3} \right\},
				\\
				\supp \mathcal{F}\left(  \sum_{j\geq 1}\widetilde\Delta_jg   \right)&\subset \left\{ \xi\in \mathbb{R}^2: |\xi| \geq  \frac{3\Theta }{2} \right\}  , 
			\end{aligned}
		\end{equation*}
		entails that
		$$  \supp\mathcal{F}\left(  \widetilde\Delta_{-1}g  \sum_{j\geq 1}\widetilde\Delta_jg   \right) \subset  \left\{ \xi\in \mathbb{R}^2: |\xi| \geq  \frac{ \Theta }{6} \right\}.$$
		Thus, we deduce that 
		$$\begin{aligned}
			\mathcal{J}_2(t) 
			&=   \int_0^t  \int_{\mathbb{R}^2}    \left( \mathds{1}_{|D|\geq  \frac{1}{12}\Theta }h \right)  \cdot \nabla \left(  \sum_{j\leq -2}\widetilde \Delta_jg  \right) \sum_{j\geq 0}\widetilde\Delta_jg(\tau,x)  dxd\tau ,
		\end{aligned}$$
		and 
		$$\begin{aligned}
			\mathcal{J}_3(t) 
			&=   \int_0^t  \int_{\mathbb{R}^2}    \left( \mathds{1}_{|D|\geq  \frac{1}{6}\Theta } h \right)  \cdot \nabla \left(  \widetilde \Delta_{-1}g  \right) \sum_{j\geq 1}\widetilde \Delta_jg(\tau,x)     dxd\tau.
		\end{aligned}$$
		
		Consequently,  as the velocity  $h$ in the above is localized in its high-frequency modes, we obtain, by a simple application of H\"older inequalities, for any $m\in [2,\infty]$, that 
		\begin{equation*} 
			\begin{aligned}
				|\mathcal{J}_2(t)| &\lesssim   \int_0^t\norm { \mathds{1}_{|D|\geq \frac{1}{12}\Theta }\nabla h(\tau) }_{L^m}  \norm {  ( \widetilde S_0 - \widetilde\Delta_{-1} )g(\tau)}_{L^{\frac{2m}{m-2}}}  \norm { (\mathrm{Id}-\widetilde S_0   )g(\tau)}_{L^2}  d\tau\\
				&\lesssim  \int_0^t\norm {\mathds{1}_{|D|\geq  \frac{1}{12}\Theta }\nabla h (\tau)  }_{L^m}  \norm {   g(\tau)}_{L^\frac{2m}{m-2}}  \norm { (\mathrm{Id}-\widetilde S_0   )g(\tau)}_{L^2}  d\tau ,
			\end{aligned}
		\end{equation*}  
		and, in a similar fashion, that
		\begin{equation*}
			\begin{aligned}
				|\mathcal{J}_3(t)| &\lesssim   \int_0^t\norm {\mathds{1}_{|D|\geq \frac{1}{6}\Theta }\nabla h(\tau)  }_{L^m}  \norm {   \widetilde \Delta_{-1}g(\tau)}_{L^\frac{2m}{m-2}}  \norm { (\mathrm{Id}-\widetilde S_0 - \widetilde \Delta_0 )g(\tau)}_{L^2}  d\tau\\
				&\lesssim  \int_0^t\norm { \mathds{1}_{|D|\geq  \frac{1}{12}\Theta }\nabla h (\tau) }_{L^m}  \norm {   g(\tau)}_{L^\frac{2m}{m-2}} \Big( \norm {   \widetilde\Delta_0g(\tau)}_{L^2}  + \norm { (\mathrm{Id}-\widetilde S_0 )g(\tau)}_{L^2} \Big) d\tau .
			\end{aligned}
		\end{equation*}  
		
		All in all, gathering the preceding bounds yields that 
		\begin{equation*}
			\begin{aligned}
				\mathcal J (t) 
				&\lesssim \sum_{i=-1}^0 \int_0^t\norm {\nabla h(\tau)}_{L^q}  \norm {\widetilde\Delta_i g(\tau)}_{L^\frac{2q}{q-1} }^2 d\tau 
				\\
				&\quad +   \int_0^t\norm { \mathds{1}_{|D|\geq  \frac{1}{12}\Theta }\nabla h (\tau) }_{L^m}  \norm {   g(\tau)}_{L^\frac{2m}{m-2}} \Big( \norm {   \widetilde\Delta_0g(\tau)}_{L^2}  + \norm { \sqrt{ \mathrm{Id}-\widetilde S_0 }g(\tau)}_{L^2} \Big) d\tau,
			\end{aligned}
		\end{equation*}
		which, once plugged into \eqref{energy-HF}, leads to the conclusion of the proof.
	\end{proof}

	We are now going to derive a proof of Theorem \ref{thm:high} from the control of the high Fourier modes of the solution of the general transport equation \eqref{transport:EQ}. 
	\begin{proof}[Proof of Theorem \ref{thm:high}] Applying Proposition \ref{prop:high:transport} for 
		$$g=\omega_\lambda, \qquad h= u_\lambda=\nabla ^\perp (\lambda - \Delta)^{-1} \omega_\lambda ,  \qquad F=0, \qquad \Theta= \Theta_\lambda ,$$
		and with the integrability parameters $q=m=2  $, 
		yields that 
		\begin{equation*}
			\begin{aligned}
			\norm{  \sqrt{\mathrm{Id} -  S_0^\lambda} \omega_\lambda (t )   }_{L^2}^2 & \lesssim \norm{  \sqrt{\mathrm{Id} -  S_0^\lambda} \omega_{0,\lambda} }_{L^2}^2 + \sum_{i=-1}^0 \int_0^t\norm {\nabla u_\lambda(\tau)}_{L^2}  \norm {\Delta_i^\lambda \omega_\lambda(\tau)}_{L^4 }^2 d\tau 
				\\
				&\quad +   \int_0^t\norm { \mathds{1}_{|D|\geq  \frac{1}{12}\Theta _\lambda}\nabla u_\lambda (\tau) }_{L^2}  \norm {  \omega_\lambda (\tau)}_{L^\infty}  \norm {   \Delta_0^\lambda\omega_\lambda (\tau)}_{L^2}      d\tau
				\\
				&\quad +   \int_0^t\norm { \mathds{1}_{|D|\geq  \frac{1}{12}\Theta _\lambda}\nabla u_\lambda (\tau) }_{L^2}  \norm {  \omega_\lambda (\tau)}_{L^\infty}  \norm {  \sqrt{ \mathrm{Id}- S_0^\lambda }\omega_\lambda(\tau)}_{L^2}  d\tau
				\\
				&\leq  \norm{  \sqrt{\mathrm{Id} -  S_0^\lambda} \omega_{0,\lambda} }_{L^2}^2
				 + t\norm {\nabla u_\lambda}_{L^\infty_t L^2}\norm {\omega_\lambda}_{L^\infty_{t,x}}   \sum_{i=-1}^0   \norm {\Delta_i^\lambda \omega_\lambda }_{L^\infty_t L^2 }  
				\\
				&\quad +    \norm {   \omega_\lambda}_{L^\infty_{t,x}}\int_0^t\norm { \mathds{1}_{|D|\geq  \frac{1}{12}\Theta_\lambda }\nabla u_\lambda (\tau) }_{L^2}   \norm {  \sqrt{ \mathrm{Id}- S_0^\lambda }\omega_\lambda(\tau)}_{L^2}  d\tau,
			\end{aligned}
		\end{equation*}
		where we employed the interpolation inequality 
		\begin{equation*}
			\norm {\Delta_i^\lambda \omega_\lambda(\tau)}_{L^4 }^2\lesssim \norm {  \omega_\lambda(\tau)}_{L^\infty } \norm {\Delta_i^\lambda \omega_\lambda(\tau)}_{L^2 }. 
		\end{equation*}
		Now, noticing that  
		$$\begin{aligned}
			\norm { \mathds{1}_{|D|\geq  \frac{1}{12}\Theta_\lambda }\nabla u_\lambda (\tau) }_{L^2}  ^2  &=  \norm { \mathds{1}_{\frac{1}{12} \Theta_\lambda \leq |D|< \frac{4	}{3} \Theta_\lambda}\nabla u _\lambda(\tau) }_{L^2}^2 + \norm {\mathds{1}_{|D|\geq \frac{4}{3}\Theta_\lambda}\nabla u_\lambda(\tau)  }_{L^2}^2  \\
			&\leq   \norm {\mathds{1}_{\frac{1}{12} \Theta_\lambda\leq |D|\leq \frac{8	}{3} \Theta_\lambda}\omega_\lambda(\tau)  }_{L^2}^2+  \norm { \sqrt{  \mathrm{Id}- S_0^\lambda}\omega_\lambda(\tau)  }_{L^2} ^2,
		\end{aligned}$$ 
		we arrive at the bound 
		\begin{equation*}
			\begin{aligned} \norm{  \sqrt{\mathrm{Id} -  S_0^\lambda} \omega_\lambda (t )   }_{L^2}^2 
				&\lesssim  \norm{  \sqrt{\mathrm{Id} -  S_0^\lambda} \omega_{0,\lambda} }_{L^2}^2 
				+  t\norm {\omega_\lambda}_{L^\infty_t (L^2\cap L^\infty)} ^2  \sum_{i=-1}^0   \norm {\Delta_i^\lambda \omega_\lambda }_{L^\infty_t L^2 }
				\\
				&\quad +    t\norm {   \omega_\lambda}_{L^\infty_{t,x}}\norm {\mathds{1}_{\frac{1}{12} \Theta_\lambda\leq |D|\leq \frac{8	}{3} \Theta_\lambda}\omega_\lambda   }_{L^\infty_t L^2} ^2
				\\
				&\quad +   \norm {   \omega_\lambda}_{L^\infty_{t,x}}\int_0^t    \norm {  \sqrt{ \mathrm{Id}- S_0^\lambda }\omega_\lambda(\tau)}_{L^2}^2  d\tau.
			\end{aligned}
		\end{equation*}
		Finally, applying Gr\"onwall's lemma yields that 
		\begin{equation*}
			\begin{aligned} e^{-Ct \norm {\omega_\lambda}_{L^\infty_{t,x}}}\norm{  \sqrt{\mathrm{Id} -  S_0^\lambda} \omega_\lambda (t )   }_{L^2}^2 
				&\lesssim \norm{  \sqrt{\mathrm{Id} -  S_0^\lambda} \omega_{0,\lambda} }_{L^2}^2 + t\norm {\omega_\lambda}_{L^\infty_t (L^2\cap L^\infty)} ^2  \sum_{i=-1}^0   \norm {\Delta_i^\lambda \omega_\lambda }_{L^\infty_t L^2 }
				\\
				&\quad +    t\norm {   \omega_\lambda}_{L^\infty_{t,x}}\norm {\mathds{1}_{\frac{1}{12} \Theta_\lambda\leq |D|\leq \frac{8	}{3} \Theta_\lambda}\omega_\lambda   }_{L^\infty_t L^2} ^2 .
			\end{aligned}
		\end{equation*}
		To conclude, observe that all the terms on the right-hand side vanish in the limit $\lambda\to 0$, either by strong compactness of the initial data and Proposition \ref{proposition:reduction}. Moreover, the condition 
		\begin{equation*}
			\lim_{\lambda \to 0} \Theta_\lambda \left(  \norm {\nabla^\perp \Delta^{-1}(  \omega_{0,\lambda}-\Omega_0)}_{ L^2(\mathbb R^2)} ^2 + C_{**} \lambda  T \right)  ^{\frac{1}{2} \exp (-C_{**}T)}=0
		\end{equation*}
		in the statement of the theorem is a weaker reformulation of that 
		\begin{equation*}
			\lim_{\lambda \to 0} \Theta_\lambda \norm {\omega_\lambda-\Omega}_{L^\infty ([0,T]; X^{-1}_{2,\infty})}=0
		\end{equation*}
		from Propostion \ref{proposition:reduction}, which is in turn a consequence of the  control 
		\begin{equation*}
			\norm {\omega_\lambda-\Omega }_{L^\infty([0,T];X^{-1}_{2,\infty})}  \leq  \left( \norm {\omega_{0,\lambda}-\Omega_0}_{X^{-1}_{2,\infty}}^2 + C_{**} \lambda T \right)  ^{\frac{1}{2} \exp (-C_{**}T)}
		\end{equation*}
		from Theorem \ref{thm:conv:low:reg}. The proof of the theorem is now  completed.		
	\end{proof}
	
	\section{Proof of the main theorems}
	
	 \subsection{Strong Convergence in Yudovich class}\label{section:proof:thm1}
	 Given the analysis from  the preceding sections, we are now ready to outline the details of  proof of Theorem \ref{thm:main}.
	\begin{proof}[Proof of Theorem \ref{thm:main}]
		The convergence in $L^\infty_tL^2$ is a direct consequence of  combining Proposition \ref{proposition:reduction} and Theorem \ref{thm:high}, whereas the convergence in $L^\infty_tL^p$, for any $p\in (1,\infty)$, follows by interpolation.
		
		As for the uniform convergence in the case of continuous initial vorticity $\Omega_0$, we proceed by first deducing a convergence of the flow maps, as a consequence of the strong convergence from the preceding step. To see that, we introduce the (backward) flow maps as the unique (log-Lipschitz) solutions of the ODEs
		\begin{equation*}
		 \Phi_\lambda (t,x) = x - \int_0^t  u_\lambda(\tau, \Phi_{\lambda}(\tau,x)) d\tau 
		\end{equation*}
		and 
		\begin{equation*}
			 \Phi (t,x) = x - \int_0^t  v(\tau, \Phi(\tau,x)) d\tau ,
		\end{equation*}
		for any $(t,x)\in \mathbb R^+\times \mathbb R^2$. It is then readily seen that 
		\begin{equation*}
			\begin{aligned}
				\norm {\Phi_\lambda(t,\cdot)-\Phi (t,\cdot)}_{L^\infty} 
				&\leq \int_0^t  \left|u_\lambda(\tau, \Phi_{\lambda}(\tau,x)) - v (\tau, \Phi_{\lambda}(\tau,x))\right|d\tau 
				\\
				&\quad + \int_0^t \left|  v (\tau, \Phi_{\lambda}(\tau,x))- v(\tau, \Phi(\tau,x)) \right| d\tau 
				\\
				& \leq  t \norm {u_\lambda - v}_{L^\infty([0,t];L^\infty)} 
				\\
				&\quad + \norm {v}_{L^\infty([0,t];LL)}\int_0^t \norm {(\Phi_\lambda -\Phi) (\tau ,\cdot)}_{L^\infty} \log\left( \frac{e}{\norm {(\Phi_\lambda -\Phi) (\tau,\cdot)}_{L^\infty} } \right)   d\tau .
			\end{aligned}
		\end{equation*}
		Hence, by the interpolation inequality (using \eqref{bound:vorticities}, as well)
		\begin{equation*}
			\begin{aligned}
				\norm {u_\lambda - v}_{L^\infty([0,t]:L^\infty)} 
				&\lesssim \norm {u_\lambda - v}_{L^\infty([0,t];B^0_{2,\infty})} ^\frac{1}{2} \norm {u_\lambda - v}_{L^\infty([0,t];B^1_{\infty,\infty})}  ^\frac{1}{2}
				\\
				&\lesssim \norm {\omega_\lambda - \Omega}_{L^\infty([0,t];B^{-1}_{2,\infty})} ^\frac{1}{2} \norm {\omega_\lambda - \Omega}_{L^\infty([0,t];L^\infty)}  ^\frac{1}{2}
				\\
				&\lesssim C_{*}^\frac{1}{2} \norm {\omega_\lambda - \Omega}_{L^\infty([0,t];B^{-1}_{2,\infty})} ^\frac{1}{2}  
			\end{aligned}
		\end{equation*}
		together with 
		\begin{equation*}
			\norm {v}_{L^\infty([0,t];LL)} \lesssim \norm {\Omega}_{L^\infty([0,t];L^1\cap L^\infty)} \lesssim C_*,
		\end{equation*}
		it follows, by virtue of Osgood lemma (see \cite[Lemma 3.4]{bcd11}), that 
		\begin{equation*}
			\norm {\Phi_\lambda(t,\cdot)-\Phi (t,\cdot)}_{L^\infty} \lesssim  \left( C_{*} t\norm {\omega_\lambda - \Omega}_{L^\infty([0,t];B^{-1}_{2,\infty})}    \right) ^{-\frac{1}{4} \exp (-C_*t)},
		\end{equation*}
		for any finite $t \geq 0.$ 
		In view of the convergence result from the preceding steps, this entails that 
		\begin{equation*}
			\lim_{\lambda\to 0}\norm {\Phi_\lambda -\Phi }_{L^\infty([0,t]:L^\infty)} =0.
		\end{equation*}
		In fact, this convergence can be further improved, by interpolation, to hold in some positive H\"older spaces (with time deteriorating regularity exponent, see \cite{C98}). This improvement will not be required in our analysis below and is therefore omitted here.
		
		With the convergence of the flow maps at hand, we recast the solutions of \eqref{QGSW} and \eqref{Euler}, respectively, as
		\begin{equation*}
			\omega_{\lambda} (t,x)= \omega_{0,\lambda}(\Phi_\lambda (t,x))  \qquad \text{ and } \qquad \Omega(t,x)= \Omega_{0}(\Phi (t,x)), 
		\end{equation*}
		for all $(t,x)\in \mathbb R^+\times \mathbb R^2$. It follows easily that 
		\begin{equation*}
			\lim_{\lambda\to 0}\norm {\omega_{0,\lambda}(\Phi _\lambda(t,\cdot)) - \Omega_{0}(\Phi (t,\cdot))}_{L^\infty}=0
		\end{equation*} 
		by combining  the uniform convergence of the flow maps and the assumption on the strong convergence of the initial data in $L^\infty(\mathbb R^2)$. This concludes the proof of the theorem.
	\end{proof}
	\subsection{Sharpness of the Uniform Convergence}\label{section:proof:thm2}
	In the proof of Theorem \ref{thm:main2} below, we will use a result from \cite{GPSY21} on the characterization of uniformly rotating/stationary solutions of some active scalar equations. Thus, for convenience, embark with a concise discussion about  the central notions of our interest in this section.
	
	 To begin with, the following assumption from \cite{GPSY21} (set here in the two-dimensional case) applies to   our analysis later on.
	
	\begin{conditionHD}\label{HK:cond}
		Let $\mathcal K\in C^1(\mathbb R^2\setminus \{0\})$ be radially symmetric with 
	\begin{equation*}
		\mathcal K'(r) >0, 
	\end{equation*}
	for any $r>0$, and 
	\begin{equation*}
		|\mathcal K'(r)| \lesssim \frac{1}{r^{\delta
		}}, \quad \text{for some } \delta\in [2,3),
	\end{equation*} 
	for any $r\in (0,1]$.
	\end{conditionHD}

	The next proposition provides some examples of kernels   that satisfy the preceding condition. It will be a cornerstone to apply Theorem \ref{thm:disc} below later on in the proof of Theorem \ref{thm:main2}. 
	
	\begin{prop}\label{prop:kernel:A}
		Let the dimension be equal to two. The  kernels associated to the operators 
		\begin{equation*}
			\Delta ^{-1}  , \qquad  - (\lambda - \Delta)^{-1},  \qquad  \Delta ^{-1}+ (\lambda - \Delta)^{-1} ,
		\end{equation*}
		i.e., 
		\begin{equation*}
			x  \mapsto  \frac{1}{2\pi } \log |x|  , \qquad x  \mapsto   - \frac{1}{2\pi } K_0(\sqrt \lambda |x|), \qquad x  \mapsto  \frac{1}{2\pi }\log |x|+  \frac{1}{2\pi }K_0(\sqrt \lambda |x|) ,
		\end{equation*}
		 enjoy Condition (HD)  for any $\lambda >0$, where $K_0$ is the modified $K$-Bessel function. 
	\end{prop}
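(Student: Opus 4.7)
The plan is to verify Condition (HD) separately for each of the three kernels. Radial symmetry and $C^1(\mathbb{R}^2\setminus\{0\})$ regularity are immediate from the explicit formulas, so what remains is to check strict positivity of the radial derivative $\mathcal{K}'(r)$ and an upper bound of the form $|\mathcal{K}'(r)|\lesssim r^{-\delta}$ on $(0,1]$ for some $\delta \in [2,3)$. Since every derivative I compute will blow up at most like $r^{-1}$ near the origin, the choice $\delta = 2$ suffices in all three cases.

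The first two kernels are essentially routine. For $\mathcal{K}(r) = \frac{1}{2\pi}\log r$, one simply has $\mathcal{K}'(r) = \frac{1}{2\pi r} > 0$, which is obviously dominated by $r^{-2}$ on $(0,1]$. For $\mathcal{K}(r) = -\frac{1}{2\pi} K_0(\sqrt\lambda r)$, the identity $K_0'(z) = -K_1(z)$ yields
\[
\mathcal{K}'(r) = \frac{\sqrt\lambda}{2\pi} K_1(\sqrt\lambda r) > 0,
\]
since $K_1$ is strictly positive on $(0,\infty)$; the small-argument asymptotic $K_1(z)\sim z^{-1}$ combined with the exponential decay of $K_1$ at infinity gives $|\mathcal{K}'(r)| \lesssim r^{-1}$ on $(0,1]$, which is stronger than what is required.

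The combined kernel is the only case that calls for actual work. Its radial derivative can be rewritten as
\[
\mathcal{K}'(r) = \frac{1}{2\pi r} - \frac{\sqrt\lambda}{2\pi} K_1(\sqrt\lambda r) = \frac{1}{2\pi r}\bigl(1 - z K_1(z)\bigr),\qquad z \bydef \sqrt\lambda\, r,
\]
so its positivity is equivalent to the pointwise inequality $zK_1(z) < 1$ on $(0,\infty)$. I will establish this through the monotonicity identity $\frac{d}{dz}\bigl(zK_1(z)\bigr) = -zK_0(z) < 0$, which follows directly from $K_1'(z) = -\tfrac{1}{2}\bigl(K_0(z) + K_2(z)\bigr)$ and the recurrence $K_2(z) = K_0(z) + \tfrac{2}{z}K_1(z)$. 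Combining the strict decrease of $zK_1(z)$ with the boundary value $zK_1(z)\to 1$ as $z \to 0^+$ produces the desired strict inequality, hence $\mathcal{K}'(r) > 0$. For the upper bound on $(0,1]$, the refined expansion $K_1(z) = z^{-1} + \tfrac{z}{2}\log(z/2) + O(z)$ shows that the two $r^{-1}$ singularities in $\mathcal{K}'(r)$ cancel exactly, leaving $\mathcal{K}'(r)$ bounded near zero (with only a mild $r|\log r|$ modulation), which is trivially $\lesssim r^{-2}$ on $(0,1]$.

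The main obstacle is the positivity step for the combined kernel; everything else reduces to elementary manipulations with $\log$ and to standard asymptotics of the modified Bessel functions $K_0$ and $K_1$.
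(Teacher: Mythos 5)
Your proof is correct, and the key step---strict positivity of $\mathcal{K}'$ for the combined kernel---is handled by a genuinely different route from the paper. You reduce positivity to the inequality $zK_1(z)<1$ and derive it from the monotonicity identity $\frac{d}{dz}\bigl(zK_1(z)\bigr) = -zK_0(z)<0$ (a special case of $\frac{d}{dz}(z^\nu K_\nu(z))=-z^\nu K_{\nu-1}(z)$) together with the boundary limit $zK_1(z)\to 1$ as $z\to 0^+$. The paper instead reduces to $\lambda=1$ by scaling, invokes the integral representation $K_1(r)=r\int_1^\infty e^{-rt}(t^2-1)^{1/2}\,dt$, bounds the integrand via $(s^2+2s)^{1/2}\le s+1$ after the substitution $t\mapsto s+1$, and lands on the elementary inequality $\frac{1-e^{-r}}{r}-e^{-r}>0$ (equivalently $e^r>1+r$). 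Your argument is slightly slicker: it avoids the integral representation and the final calculus-of-variations verification entirely, at the price of relying on two Bessel recurrences rather than one. You also supply an explicit check of the decay bound $|\mathcal{K}'(r)|\lesssim r^{-\delta}$ via the small-argument expansion of $K_1$, whereas the paper delegates the near-origin behavior to a cited lemma; this makes your write-up more self-contained. Both arguments are sound.
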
 
 
	\begin{proof}
	We only focus our attention on the proof of the monotonicity of the third kernel from the  list of operators mentioned above. On the other hand,  while justifying that  Laplace kernel satisfies Condition  (HD) is obvious,  the proof of that K-Bessel function can be deduced from the arguments below. We also refer to \cite[Lemma 2.2]{HZHH23} for a precise statement on the behavior of these kernels near the origin.
	
	Having fixed our objective, we now begin by noticing that the monotonicity of the function 
	\begin{equation*}
		x  \mapsto   K_0(\sqrt \lambda |x|) +   \log |x|
	\end{equation*}
	does not depend of the value of the positive parameter $\lambda$, due to the identity 
	\begin{equation*}
	   K_0(\sqrt \lambda r) +   \log r =    K_0(\sqrt \lambda r) +   \log  (\sqrt \lambda r ) - \log (\sqrt \lambda),
	\end{equation*}
	for any $\lambda , r>0$. Thus, henceforth, we restrict our analysis to the value $\lambda=1$. 
	
	Recasting the modified Bessel function in its integral representation (see \cite[Identities 9.6.23 and 9.6.27]{Abram}) allows us to write that 
	\begin{equation*}
		  K'_0(r)= -K_1(r)= - r  \int_1^\infty e^{-rt} (t^2-1)^{\frac{1}{2}} dt, \qquad \text{for all } r>0,
	\end{equation*}
	which can  equivalently be reformulated, by virtue of  the change of variable $t\mapsto s+1$, as 
	\begin{equation*}
		K'_0(r) = - r e^{-r } \int_0^\infty  e^{-rs} (s^2 + 2s)^{\frac{1}{2}} ds.
	\end{equation*}
	Accordingly, it follows that 
	\begin{equation*}
		K_0'(r) \geq - r e^{-r}  \int_0^\infty e^{-rs} (s+1)  dt =  -e^{-r} \left( 1+ \frac{1}{r}\right), \qquad \text{for all } r>0.
	\end{equation*}
	Therefore, we obtain that 
	\begin{equation*}
		K_0'(r) + \frac{1}{r} \geq   \frac{1-e^{-r}}{r} - e^{-r} >0, \qquad \text{for all } r>0,
	\end{equation*}
	where the last lower bound can be easily verified  by  means of an elementary calculus of variations. This completes the proof of the proposition. 
	\end{proof}

	Let us now state a simple version of a theorem from \cite{GPSY21}, which will come into play later on in the proof of Theorem \ref{thm:main2}.
	
	\begin{thm}\label{thm:disc}
		Let $D\subset \mathbb R^2$ be a simply connected domain with a rectifiable boundary, and let $\mathcal K$ be a kernel satisfying Condition (HD), above. If $D$ is a solution of the equation 
		\begin{equation*}
			\mathds 1_{D}* \mathcal K= const, \quad \text{on }\partial D,
		\end{equation*}
		then, up to a translation, $D$ is necessarily a disc. 
	\end{thm}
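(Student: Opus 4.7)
The plan is to prove this via the moving plane method of Alexandrov--Serrin, adapted to the nonlocal equation $\mathds 1_{D} * \mathcal K = c$ on $\partial D$, following the scheme of \cite{GPSY21}. Since $D$ is a Jordan domain with finite perimeter it is bounded. I would fix an arbitrary unit vector $e \in \mathbb R^2$ and, for each $\mu \in \mathbb R$, consider the affine line $T_\mu = \{x \cdot e = \mu\}$, the half-planes $\Sigma_\mu^{\pm}$, the caps $D_\mu^{\pm} = D \cap \Sigma_\mu^{\pm}$, the reflection $\sigma_\mu$ across $T_\mu$, and the reflected cap $\widetilde D_\mu^+ := \sigma_\mu(D_\mu^+)$. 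For $\mu$ larger than $\sup_{D} x\cdot e$ the cap $D_\mu^+$ is empty; I would decrease $\mu$ until the first critical value $\mu_0$ at which the inclusion $\widetilde D_\mu^+ \subset \overline{D_\mu^-}$ just starts to fail. The goal then reduces to showing $D = \sigma_{\mu_0}(D)$, as arbitrariness of $e$ would force symmetry under every reflection and hence that $D$ is a disc centered at the common axis point.

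The core analytic tool is an integrated maximum principle powered by Condition (HD). Setting $u := \mathds 1_D * \mathcal K$, a direct change of variables (using that $\mathcal K$ is radial, hence even) shows that, for $x \in \Sigma_\mu^-$ and any $\mu \geq \mu_0$,
\begin{equation*}
v_\mu(x) := u(x) - u(\sigma_\mu x) = \int_{E_\mu} \bigl[\mathcal K(x-y) - \mathcal K(x - \sigma_\mu y)\bigr]\, dy,
\end{equation*}
where $E_\mu := D_\mu^- \setminus \widetilde D_\mu^+ \subset \Sigma_\mu^-$. For any $x,y \in \Sigma_\mu^-$ one verifies elementarily that $|x-y| < |x - \sigma_\mu y|$ strictly, and the strict monotonicity $\mathcal K'(r) > 0$ then gives $\mathcal K(x-y) < \mathcal K(x-\sigma_\mu y)$ pointwise. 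Consequently $v_\mu \leq 0$ on $\Sigma_\mu^-$, with \emph{strict} inequality at every interior point of $\Sigma_\mu^-$ whenever $E_\mu$ has positive Lebesgue measure, i.e., whenever $D \neq \sigma_\mu(D)$.

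By the definition of $\mu_0$, at least one of the following touching configurations occurs at the critical value: (a) $\partial \widetilde D_{\mu_0}^+$ meets $\partial D_{\mu_0}^-$ at an interior point $p \in \Sigma_{\mu_0}^-$, or (b) the boundary $\partial D$ is tangent to $T_{\mu_0}$ orthogonally at some $q \in T_{\mu_0} \cap \partial D$ (i.e., the tangent to $\partial D$ at $q$ is parallel to $e$). Scenario (a) yields an immediate contradiction: since $\sigma_{\mu_0} p \in \partial D_{\mu_0}^+ \subset \partial D$, the boundary condition gives $v_{\mu_0}(p) = c - c = 0$, which is incompatible with the strict negativity above unless $E_{\mu_0}$ has Lebesgue measure zero, i.e., $D = \sigma_{\mu_0}(D)$, as desired.

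The main obstacle will be scenario (b), which requires a boundary Hopf-type estimate for the nonlocal operator. Here $v_{\mu_0}(q) = 0$ trivially because $\sigma_{\mu_0} q = q$, while $v_{\mu_0} < 0$ strictly inside $\Sigma_{\mu_0}^-$; moreover, the first-order derivative of $v_{\mu_0}$ at $q$ along $e$ vanishes because the tangential derivative of $u$ on $\partial D$ is zero by constancy. The contradiction must therefore be extracted at second order by carefully expanding $\mathcal K(x-y) - \mathcal K(x - \sigma_{\mu_0} y)$ in $x$ near $q$, exploiting the curvature/orthogonality of $\partial D$ at $q$. This is exactly where Condition (HD) does the work: the singularity bound $|\mathcal K'(r)| \lesssim r^{-\delta}$ with $\delta < 3$ guarantees that the second-order Taylor remainders integrate absolutely on $E_{\mu_0}$, whereas the strict positivity $\mathcal K' > 0$ supplies the required sign. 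A small technical point that must be addressed here is that $\partial D$ is only rectifiable, so one should either work at density points of $\partial D$ where a suitable one-sided tangent exists or smoothly approximate $D$ and pass to the limit. Once both scenarios (a) and (b) are excluded at any non-symmetric configuration, I conclude $D = \sigma_{\mu_0}(D)$, and iterating this over all directions $e \in \mathbb R^2$ yields that $D$ is, up to translation, a disc.
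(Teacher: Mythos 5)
The paper does not actually prove Theorem \ref{thm:disc}: it is stated as a simplified, two-dimensional version of \cite[Theorem 4.2]{GPSY21} and cited as a black box, so there is no in-paper proof to compare against. Moreover, your attribution of the moving-plane method to \cite{GPSY21} is off: that reference establishes radial symmetry through a \emph{variational} argument based on continuous Steiner symmetrization, showing that the domain is a critical point of a suitable interaction energy and that Steiner symmetrization strictly decreases this energy unless the domain is already symmetric about every line through its centroid. Your moving-plane route is therefore a genuinely different approach from the one the paper invokes, and would constitute an independent proof if completed.

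As a proof attempt it has a real gap. The interior-touching case (a) is handled correctly: the identity $v_\mu(x) = \int_{E_\mu}[\mathcal K(x-y) - \mathcal K(x - \sigma_\mu y)]\,dy$, the elementary inequality $|x-y| < |x-\sigma_\mu y|$ for $x, y$ strictly in the open half-plane, and the monotonicity $\mathcal K' > 0$ combine to give $v_\mu < 0$ at interior touching points, contradicting $v_{\mu_0}(p) = 0$. But the orthogonal-intersection case (b) is exactly where the difficulty of such overdetermined problems lives, and you only sketch it. For nonlocal operators the corner Hopf lemma is delicate: one must expand $v_{\mu_0}$ to second order at $q$ against a singular kernel, extract a sign from the local geometry of $\partial D$ at $q$, and show the remainders are controlled. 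You correctly note that $\delta < 3$ yields absolute convergence of the relevant integrals, but no actual second-order expansion, no sign extraction, and no estimate are supplied. Compounding this, the hypothesis that $\partial D$ is merely rectifiable means a classical tangent (let alone curvature) need not exist at $q$, so case (b) cannot even be formulated without a mollification or density-point argument; you flag this but do not resolve it. As written, the proposal proves the theorem only in those configurations where the critical position lands in case (a), and the concluding step from ``symmetric in every direction'' to ``a disc'' tacitly uses the standard center-of-mass argument to locate a common axis point, which should also be made explicit.
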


	\begin{rem}
		Note that  Theorem 4.2 from \cite{GPSY21} establishes  a much stronger statement. Nevertheless, for the sake of simplicity, we have recast it here in dimension two only, under a stronger assumption on the domain $D$, compared to the more general reformulation given in \cite{GPSY21}. The fact that the stronger assumption on $D$ in our statement above implies Condition (HD) from \cite{GPSY21} is discussed in  \cite[Section 4.2]{GPSY21}.
	
	Moreover, it is worth pointing out that   \cite[Theorem 4.2]{GPSY21} actually classifies the solution $D$ of the equation 
	\begin{equation*}
			\mathds 1_{D}* \mathcal K - \frac{\Omega}{2} |\cdot|^2 = const, \quad \text{on }\partial D,
		\end{equation*}
	for $\Omega\in (-\infty,0]$, to be a disc (up to a translation when $\Omega=0$).
	\end{rem}
	
	We conclude this preliminary discussion by stating a corollary whose proof straightforwardly follows by applying Theorem \ref{thm:disc} with the kernel  associated with the operator $\Delta^{-1} + (\lambda - \Delta)^{-1}$, all by taking into into account Proposition \ref{prop:kernel:A}, as well.
	\begin{cor}\label{corollary:disc}
		Let $D\subset \mathbb R^2$ be a simply connected domain with a rectifiable boundary, and let $\mathcal K_\lambda$ be the  kernel associated with the operator $\Delta^{-1} + (\lambda - \Delta)^{-1}$, for   $\lambda>0$.  If $D$ is a solution of the equation 
		\begin{equation*}
			\mathds 1_{D}* \mathcal K_\lambda= const, \quad \text{on }\partial D,
		\end{equation*}
		then, up to a translation, $D$ is necessarily a disc. 
	\end{cor}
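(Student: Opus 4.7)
The plan is to deduce the corollary as an immediate application of Theorem \ref{thm:disc}, after identifying the explicit form of the kernel $\mathcal K_\lambda$ and verifying it fits into the hypotheses of that theorem via Proposition \ref{prop:kernel:A}. More concretely, I would begin by writing down the convolution kernel of the operator $\Delta^{-1} + (\lambda - \Delta)^{-1}$ in two dimensions, namely
\begin{equation*}
\mathcal K_\lambda(x) = \frac{1}{2\pi}\log|x| + \frac{1}{2\pi} K_0(\sqrt\lambda\, |x|),
\end{equation*}
which is exactly the third kernel listed in Proposition \ref{prop:kernel:A}. In particular, $\mathcal K_\lambda$ is radially symmetric and smooth away from the origin, with the required singular behavior dictated by the logarithmic piece.

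Next, I would invoke Proposition \ref{prop:kernel:A} directly to see that $\mathcal K_\lambda$ satisfies Condition (HD): strict positivity of the radial derivative $\mathcal K'_\lambda(r)$ for $r>0$ follows from the computation carried out in that proposition, and the near-origin estimate $|\mathcal K'_\lambda(r)| \lesssim r^{-1}$ (which corresponds to the borderline exponent $\delta=2$ in Condition (HD), since in dimension two the gradient of the logarithm is of order $1/r$) comes from the explicit asymptotics of $K_0$ near zero combined with the Laplace part.

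With the hypotheses of Theorem \ref{thm:disc} verified, the conclusion is immediate: if $D$ is a simply connected domain with rectifiable boundary solving $\mathds 1_D*\mathcal K_\lambda = \mathrm{const}$ on $\partial D$, then $D$ must be a translate of a disc. There is no substantive obstacle here; the only small point to double-check is that the value $\delta=2$ in the bound $|\mathcal K_\lambda'(r)|\lesssim r^{-\delta}$ indeed lies in the allowed range $[2,3)$ of Condition (HD), and that the rectifiable boundary assumption used here implies the more general boundary regularity required by \cite{GPSY21} (as already noted in the remark following Theorem \ref{thm:disc}). Both are straightforward to verify, so the corollary follows.
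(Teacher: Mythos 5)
Your proposal is correct and matches the paper's argument exactly: the paper likewise observes that the kernel is $\frac{1}{2\pi}\log|x| + \frac{1}{2\pi}K_0(\sqrt\lambda\,|x|)$, invokes Proposition \ref{prop:kernel:A} to verify Condition (HD), and then concludes by applying Theorem \ref{thm:disc}.
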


	We are now in a position to prove Theorem \ref{thm:main2}.
	
	\begin{proof}[Proof of Theorem \ref{thm:main2}] Let $D$ be a nontrivial simply connected domain with a rectifiable boundary, and which is different than the disc. Consider the initial datum 
	\begin{equation*}
		\omega_{0,\lambda}= \Omega
		_0= \mathds 1_{D},
	\end{equation*}
	 and note that the assumption that $D$ differs from disc is necessary, otherwise the solutions $\omega_\lambda$ and $\Omega$, associated to the same initial datum above, would be identical to the   stationary solution $\mathds 1_{D}$ of both \eqref{QGSW} and \eqref{Euler}.
	 
	 Given this setup, the solutions $\omega_\lambda$ and $\Omega$ are explicitly given by 
	 \begin{equation*}
			\omega_{\lambda} (t,\cdot)= \mathds 1_{ \Phi_{\lambda} (t,D)}  \qquad \text{ and } \qquad \Omega (t,\cdot)=\mathds 1_{ \Phi  (t,D)} ,  
		\end{equation*}
		for any $t\geq 0$,   
		where    the flow maps $\Phi_{\lambda}$ and $\Phi$ are defined in  the proof of Theorem \ref{thm:main}, above. Before going any further, notice that 
		\begin{equation*}
			\text{Vol}(\Phi _\lambda (t,D))= \text{Vol}(\Phi  (t,D))= \text{Vol}(D) , 
		\end{equation*}
		for all $\lambda>0$ and $t\geq  0$, due to the incompressibility condition. Now, we fix $\lambda>0$ and $ 0\leq t_1 < t_2$, and   consider two potential scenarios: 
		  \begin{equation*}
				\text{Vol}\Big (\Phi _\lambda (t,D) \cap \Phi  (t,D)\Big ) =\text{Vol}(D),  \quad \text{for all } t \in (t_1,t_2),			\end{equation*} 
			or
			  \begin{equation*}
				  \text{Vol}\Big (\Phi _\lambda (t,D) \cap \Phi  (t,D)\Big )<\text{Vol}(D), \quad \text{for some } t\in (t_1,t_2) .
			\end{equation*} 
			We claim that the first possibility does not occur with our choice of initial datum, whereas the second one leads to the desired lower bound on the difference between the two solutions.
			
			 To that end, we observe that, in the case when 
			\begin{equation}\label{scenario:1}
				\text{Vol}\Big (\Phi _\lambda (t,D) \cap \Phi  (t,D)\Big ) =\text{Vol}(D) = \text{Vol}(\Phi _\lambda (t,D))= \text{Vol}(\Phi  (t,D)),
			\end{equation}
			for any $t\in (t_1,t_2)$, it follows that 
			\begin{equation*}
				 \Phi _\lambda (t,D) = \Phi  (t,D), \quad \text{a.e. in } \mathbb R^2, \quad \text{for all } t\in (t_1,t_2).
			\end{equation*}
			This implies that 
			\begin{equation*}
				\omega_\lambda (t,\cdot) = \Omega (t,\cdot) ,  \quad \text{a.e. in } \mathbb R^2, \quad \text{for all } t\in (t_1,t_2),
			\end{equation*}
			which  in turn leads, by subtracting   \eqref{Euler} from \eqref{QGSW}, to the equation 
			\begin{equation*}
				\nabla ^\perp \left(  \mathcal K_\lambda  * \Omega\right) \cdot \nabla \Omega=0,
			\end{equation*}
			where we recall that $ \mathcal K_\lambda$ is the kernel associated with the operator 
			\begin{equation*}
				  \Delta^{-1} +  (\lambda-\Delta)^{-1}  .
			\end{equation*}
			Recalling that $\Omega$ is a vortex patch entails that 
			\begin{equation*}
				\nabla ^\perp \left( \mathcal K_\lambda*  \mathds 1_{\Phi(t,D)} \right) \cdot \vec n_{t} =0, \quad \text{on } \partial \Phi(t,D), \quad \text{for all } t\in (t_1,t_2),
			\end{equation*}
			where $\vec n_t$ denotes the outward normal vector to the boundary $\partial \Phi(t,D) $, which in turn can be recast as 
			\begin{equation*}
				 \mathcal K_\lambda*  \mathds 1_{\Phi(t,D)} =const , \quad \text{on } \partial \Phi(t,D), \quad \text{for all } t\in (t_1,t_2).
			\end{equation*}
			Thus,  Corollary  \ref{corollary:disc} implies  that $   \Phi(t,D)$ is necessarily  a disc. This is already a contradiction as $D$, and whence $ \Phi(t,D)$, is different than the disc, by assumption. 
			
			This rules  out the possibility for scenario \eqref{scenario:1} to occur. 
				Alternatively, it must happen that 
				\begin{equation*}
					\text{Vol}\Big (\Phi _\lambda (t,D) \cap \Phi  (t,D)\Big )<\text{Vol}(D), \quad \text{for some } t\in (t_1,t_2),
				\end{equation*} 
				which entails the existence of some nontrivial  set $\Gamma_{t,\lambda}\subset \mathbb R^2$ with the properties that 
				\begin{equation*}
					\text{Vol}(\Gamma_{t,\lambda})>0, \qquad \Gamma_{t,\lambda} \subset \Phi_{t,\lambda} (t,D)\qquad \text{and} \qquad  \Gamma_{t,\lambda} \cap \Phi (t,D) =\emptyset .
				\end{equation*}
				Given this, it is then readily seen that 
				\begin{equation*}
					\begin{aligned}
						\norm {\omega_\lambda(t,\cdot)-\Omega(t,\cdot)}_{L^\infty (\mathbb R^2)}
						& \geq \sup_{x\in \Gamma_{t,\lambda}} |\mathds 1_{\Phi (t,D)}(x) - \mathds 1_{\Phi_\lambda (t,D)}(x)|=1,
					\end{aligned}
				\end{equation*}
				thereby completing the proof of the theorem.
	\end{proof}


	\bibliographystyle{plain} 
	\bibliography{plasma}

\end{document}